\newtheorem{thm}{Theorem}[section]
\newtheorem{lemma}[thm]{Lemma}
\newtheorem{prop}[thm]{Proposition}
\newtheorem{cor}[thm]{Corollary}
\newtheorem*{DKSUthm}{Theorem [DKSU]}
\def \R{\mathbb{R}}
\def \C{\mathbb{C}}
\def \Rn{\mathbb{R}^n}
\def \RnI{\mathbb{R}^{n+1}}
\def \RnIB{\mathbb{R}^{n+1} \setminus B}
\def \Rniip{\mathbb{R}^{n+1}_{1+}}
\def \a{\alpha}
\def \b{\beta}
\def \d{\delta}
\def \g{\gamma}
\def \th{\theta}
\def \s{\sigma}
\def \e{\varepsilon}
\def \ph{\varphi}
\def \L{\mathcal{L}}
\def \z{\zeta}
\def \nd{\partial_{\nu}}
\def \Om{\Omega}
\def \Lap{\triangle}
\def \grad{\nabla}
\def \S{\mathcal{S}}
\def \Czinf{C^{\infty}_0}
\def \Cinf{C^{\infty}}
\def \Omt{\tilde{\Omega}}
\def \frontom{F_{\Om}}
\def \backom{B_{\Om}}
\def \half{\frac{1}{2}}
\def \Laq{\mathcal{L}_{W,q}}
\def \Laqone{\mathcal{L}_{W_1,q_1}}
\def \Laqtwo{\mathcal{L}_{W_2,q_2}}
\def \Ltphe{\tilde{\mathcal{L}}_{\ph,\e}}
\def \Lph{\mathcal{L}_{\ph}}
\def \Lphe{\mathcal{L}_{\ph,\e}}
\def \Lphet{\tilde{\mathcal{L}}_{\ph,\e}}
\def \Lphets{\mathcal{L}_{\ph,\e,\s}}
\def \Lphaq{\mathcal{L}_{\ph,W,q}}
\def \Lphaqe{\mathcal{L}_{\ph,\e, W,q}}
\numberwithin{equation}{section}
\begin{document}

\title[A Partial Data Result for the Magnetic Schr\"{o}dinger Inverse Problem]{A Partial Data Result for the Magnetic Schr\"{o}dinger Inverse Problem}

\author[Chung]{Francis J. Chung}
\address{Department of Mathematics, University of Chicago, Chicago, IL 60637 USA}
\email{fjchung@math.uchicago.edu}

\subjclass[2000]{Primary 35R30}

\keywords{Dirichlet-Neumann map, Magnetic Schr\"{o}dinger operator, Inverse problems, Semiclassical analysis, Pseudodifferential operators}

\begin{abstract}
This article shows that knowledge of the Dirichlet-Neumann (DN) map on certain subsets of the boundary for functions supported roughly on the rest of the boundary uniquely determines a magnetic Schr\"{o}dinger operator.  With some geometric conditions on the domain, either the subset on which the DN map is measured or the subset on which the input functions have support may be made arbitrarily small.  This is a response to a question posed in [DKSU].  The method involves modifying the Carleman estimate in that paper by conjugation with certain pseudodifferential-like operators.  
\end{abstract}

\maketitle

\section{Introduction}

Let $n \geq 2$, and let $\Om$ be a simply-connected bounded domain in $\RnI$, with smooth boundary.  If $W$ is a $C^2$ vector field on $\RnI$, and $q$ is an $L^{\infty}$ function on $\RnI$, then let $\Laq$ denote the magnetic Schr\"{o}dinger operator 
\[
\Laq = (D+W)^2 + q,
\]
where $D = -i\nabla$.  I will assume that $q$ and $W$ are such that zero is not an eigenvalue of $\Laq$ on $\Om$.  Then the Dirichlet problem 
\begin{eqnarray*}
& & \Laq u = 0 \\
& & u|_{\partial \Om} = g
\end{eqnarray*}
has a unique solution $u \in H^1(\Om)$ for each $g \in H^{\half}(\partial \Om)$.  Therefore for $g \in H^{\half}(\partial \Om)$ we can define the Dirichlet-Neumann map $\Lambda_{W,q}$ by
\[
\Lambda_{W,q} g = (\partial_{\nu} + iW \cdot \nu)u|_{\partial \Om},
\]
where $\nu$ is the outward unit normal, and $u$ is the unique solution to the Dirichlet problem with boundary value $g$.  This gives a well-defined map from $H^{\half}(\partial \Om)$ to $H^{-\half}(\partial \Om)$.  

The basic inverse problem associated to the magnetic Schr\"{o}dinger operator $\Laq$ is to recover $dW$ and $q$ from knowledge of $\Lambda_{W,q}$. (Here $W$ is identified with the 1-form $W_1 dx_{1} + \ldots + W_{n+1} dx_{n+1}$.)  Note that we cannot hope to recover $W$ itself since the Dirichlet-Neumann map is invariant under the gauge transformation $W \mapsto W + \grad \Psi$ whenever $\Psi \in C^1(\overline{\Om})$ and $\Psi|_{\partial \Om} = 0$.  However, identifying $dW$ identifies $W$ up to this gauge transformation.

In [Su], Sun first showed that full knowledge of the Dirichlet-Neumann map determines $dW$ and $q$ when $W$ is small enough, in a certain sense.  In [NSuU], Nakamura, Sun, and Uhlmann removed the smallness assumption, and showed that full knowledge of the Dirichlet-Neumann map determines $dW$ and $q$ for $C^2$ $W$ and $L^{\infty}$ $q$.  Tolmasky [To] and Salo [Sa1] improved the regularity conditions on $W$ to $C^{2/3 + \e}$ and Dini continuous, respectively.  Salo also gave in [Sa2] a proof for $W \in C^{1 + \e}$ involving a reconstruction method.  

In [DKSU], Dos Santos Ferreira, Kenig, Sj\"{o}strand, and Uhlmann proved a partial data result for this operator.  Assume that $x_0$ is not in the closure of the convex hull of $\Om$.  Define the front and back of $\partial \Om$ (with respect to $x_0$) by
\begin{eqnarray*}
F_{\Om} &=& \{x \in \partial \Om | (x - x_0) \cdot \nu(x) \leq 0\} \\
B_{\Om} &=& \{x \in \partial \Om | (x - x_0) \cdot \nu(x) \geq 0\}, 
\end{eqnarray*}
where $\nu(x)$ is the outward unit normal at $x$.  Then Theorem 1.1 in [DKSU] says (with different notation) the following.
\begin{DKSUthm}
Let $W_1$ and $W_2$ be $C^2$ vector fields on $\overline{\Om}$, and let $q_1$ and $q_2$ be $L^\infty$ functions on $\Om$.  Suppose $U$ is a neighbourhood of $F_{\Om}$ such that 
\[
\Lambda_{W_1,q_1} g |_{U} = \Lambda_{W_2,q_2} g |_{U}
\]
for all $g \in H^{\half}(\partial \Om)$.  Then $q_1 = q_2$ and $dW_1 = dW_2$.  
\end{DKSUthm}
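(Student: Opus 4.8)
The plan is to follow the Sylvester--Uhlmann complex geometric optics (CGO) method in the semiclassical, partial-data form of Kenig--Sj\"ostrand--Uhlmann, using the limiting Carleman weight $\ph(x) = \log|x-x_0|$ and absorbing the first-order magnetic term into the CGO amplitudes. First I would establish the boundary determination: $\Lambda_{W_j,q_j}g|_U$ for all $g$ determines the Taylor coefficients of $W_j$ and $q_j$ at every point of $\frontom$, by the standard argument testing against Dirichlet data concentrating near a boundary point. In particular the tangential part of $W_1 - W_2$ vanishes on $\pd\Om$; since both $\Lambda_{W,q}$ and $dW$ are invariant under the gauge change $W \mapsto W + \grad\Psi$ with $\Psi|_{\pd\Om}=0$, I may replace $W_1$ by $W_1 + \grad\Psi$ for a suitable $\Psi$ so that $W := W_1 - W_2$ has vanishing full trace on $\pd\Om$, and then extend $W$ and $q_1 - q_2$ by zero to get compactly supported perturbations of $\RnI$. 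It then suffices to prove $q_1 = q_2$ and $dW = 0$ in $\Om$.

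The technical heart is a Carleman estimate for the conjugated operator $e^{\ph/h}h^2\Laq e^{-\ph/h}$ with a convexified weight $\ph_\e = \ph + \tfrac{\e}{2}\ph^2$, retaining the boundary terms and splitting them according to the sign of $\nd\ph = (x-x_0)\cdot\nu/|x-x_0|^2$, so that $\frontom$ and $\backom$ land on opposite sides. Schematically, for $u$ with suitably controlled boundary trace,
\[
\tfrac hC\|u\|^2 + \tfrac hC\int_{\frontom}|\nd\ph|\,|\nd u|^2 e^{2\ph_\e/h}\,dS \;\le\; \|e^{\ph_\e/h}h^2\Laq e^{-\ph_\e/h}u\|^2 + Ch\int_{\backom}|\nd\ph|\,|\nd u|^2 e^{2\ph_\e/h}\,dS .
\]
Being first order, the magnetic term contributes an $O(h)$ commutator relative to the principal part and is absorbed. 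By the usual Hahn--Banach duality this estimate produces solutions of the conjugated equation with controlled semiclassical $H^1$ norm, and, crucially, one may additionally require the solution to vanish on $\pd\Om\setminus U$ --- which lies where $\nd\ph \ge 0$, so that zero Dirichlet data there is the favourable boundary condition for the relevant weight. This is how the hypothesis on a neighbourhood of the front enters.

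Next I would construct the CGO solutions. Let $\psi$ be the harmonic-conjugate-type weight (angular distance from a unit vector $\omega$), so that in log-polar coordinates about $x_0$ the phase $\ph + i\psi$ behaves like a holomorphic function of two real variables, making the eikonal equation automatic. I take $u_1 = e^{-(\ph+i\psi)/h}(a_1 + r_1)$ solving $\Laqone u_1 = 0$, and $u_2 = e^{(\ph+i\psi)/h}(a_2 + r_2)$ solving the adjoint equation $\Laqtwo^{*}u_2 = 0$ with $u_2|_{\pd\Om\setminus U} = 0$. Cancelling the $O(1/h)$ terms forces first-order transport equations on the amplitudes whose principal part is a $\bar\pd$-type equation in the $2$-plane spanned by $\grad\ph$ and $\grad\psi$, with right-hand side linear in $W_1$, respectively $W_2$; solving them by a Cauchy integral gives $a_j = e^{i\Phi_j}a_j^{(0)}$, with $(\grad\ph + i\grad\psi)\cdot\grad\Phi_j$ a prescribed quantity built from $W_j$ and $a_j^{(0)}$ a homogeneous solution chosen to produce the Fourier modes I need; the remainders are $O(h)$ in semiclassical $H^1$ by the previous step.

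Substituting into the Green identity, and using both the DN-agreement on $U$ and $u_2|_{\pd\Om\setminus U} = 0$, gives $\int_\Om (\Laqtwo - \Laqone)u_1\,\overline{u_2}\,dx = 0$. The integrand splits into a first-order term $\sim W\cdot Du_1$ of size $O(1/h)$, zeroth-order magnetic terms, and $(q_2 - q_1)u_1$ of size $O(1)$; multiplying by $h$ and letting $h\to 0$ kills the $O(1)$ part and leaves an identity of the form $\int e^{i x\cdot\xi}\,[(\grad\ph + i\grad\psi)\cdot W]\,e^{i(\Phi_1 - \overline{\Phi_2})}\,a_1^{(0)}\overline{a_2^{(0)}}\,dx = 0$, in which $\grad\ph + i\grad\psi$ is orthogonal to $\xi$, so only the part of $W$ that feeds $dW$ survives; varying $\omega$ and the homogeneous amplitudes recovers $\widehat{dW}$ on an open set of frequencies, and analyticity (compact support) forces $dW \equiv 0$, i.e.\ $dW_1 = dW_2$. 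Since $\Om$ is simply connected, $W = \grad\Psi$ with $\Psi|_{\pd\Om} = 0$; gauging $u_1$ to the CGO solution $e^{-i\Psi}u_1$ of the operator with potentials $W_2$ and $q_1$, and rerunning the identity --- now with no $O(1/h)$ term --- yields $\widehat{q_1 - q_2}(\xi) = 0$ on an open set, hence $q_1 = q_2$. I expect the main obstacle to be the Carleman estimate of the second step: getting the correctly signed boundary terms for $\log|x-x_0|$ and its convexification, uniformly in $h$, while absorbing the first-order magnetic perturbation and arranging solvability with the partial vanishing condition on $\pd\Om\setminus U$; a secondary difficulty is the bookkeeping needed to separate the $dW$ and $q$ contributions and to reach enough frequencies.
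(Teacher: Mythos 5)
Your overall architecture (logarithmic limiting Carleman weight, convexification, CGO solutions, Green's identity, $h\to 0$) is the right one, but two steps as written do not go through. First, the claim that ``by the usual Hahn--Banach duality \ldots one may additionally require the solution to vanish on $\pd\Om\setminus U$'' while keeping an $O(h)$ remainder in semiclassical $H^1$ is precisely the point where the magnetic case breaks: the $L^2\to L^2$ estimate you display yields, by duality, only $L^2$ control of the remainder with boundary vanishing, and the $H^1$-producing variants of the DKSU estimate destroy the boundary information. Obtaining $H^1$-accurate CGO solutions with prescribed vanishing on part of the boundary is exactly the content of this paper's new $H^{-1}$ Carleman estimate (Theorem \ref{mainCarl}) and is not available for the argument you sketch. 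It is also unnecessary for the statement at hand: with full Dirichlet data one takes $\tilde u_2$ with no boundary condition, sets $w$ to be the $\Laqone$-solution with $w|_{\pd\Om}=\tilde u_2|_{\pd\Om}$, so that $\nd(w-\tilde u_2)=0$ on $U\supset\frontom$, and then controls $\int_{\pd\Om\setminus U}\nd(\tilde u_2-w)\,\overline{u_1}\,dS$ by the boundary terms of the Carleman estimate applied to $w-\tilde u_2$ (which has zero Dirichlet trace), using $\nd\ph\geq\e_1$ on $\pd\Om\setminus U$ and the vanishing of the $\frontom$ boundary term. Your preliminary reduction is also unjustified: knowing $\Lambda_{W_j,q_j}$ only on $U$ determines boundary jets only on $U$, not the tangential trace of $W_1-W_2$ on all of $\pd\Om$, so you cannot gauge and extend by zero; fortunately this reduction is not needed.

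Second, the endgame is glossed over at its hardest point. With $\ph=\log|x-x_0|$ there are no linear phases, so the limiting identity is not of the form $\int e^{ix\cdot\xi}(\cdots)\,dx=0$; it reads $\int_{\Om}(W_2-W_1)\cdot(\grad\ph+i\grad\psi)\,e^{\overline{\Phi_1}+\Phi_2}\,g\,dV=0$ for $g$ annihilated by $(\grad\ph+i\grad\psi)\cdot\grad$. The factor $e^{\overline{\Phi_1}+\Phi_2}$ cannot be discarded or absorbed into $g$, because $\overline{\Phi_1}+\Phi_2$ satisfies a $\bar\pd$-type equation whose right-hand side involves the unknown $W_2-W_1$ itself. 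The actual proof rewrites the integrand as $g\,\grad\cdot((\grad\ph+i\grad\psi)e^{\overline{\Phi_1}+\Phi_2})$, passes to two-dimensional slices $\Om_\eta$, applies Stokes to get $\int_{\pd\Om_\eta}g\,e^{\overline{\Phi_1}+\Phi_2}(z-\overline z)^{n-1}dz=0$, and then needs the Cauchy-integral and winding-number argument (Lemma \ref{holomagic}) to produce a nonvanishing holomorphic extension and hence a holomorphic logarithm, after which one finally obtains $\int_{\Om_\eta}g\,(W_1-W_2)\cdot(e_s+ie_t)\,d\overline z\wedge dz=0$ and concludes via the local two-plane Radon transform result (Lemma 5.2 of [DKSU]), not via analyticity of a compactly supported Fourier transform. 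This removal of the exponential, and the subsequent harmonic-conjugate argument showing $\Psi$ is constant on $\pd\Om_\eta$ before identifying $q_1=q_2$, are the substantive steps your proposal would need to supply.
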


However, in the case that $W \equiv 0$, [KSU] had already given a better partial data result, which says that we only need
\[
\Lambda_{0,q_1} g |_{U} = \Lambda_{0,q_2} g |_{U}
\] 
for $g \in H^{\half}(\partial \Om)$ with support in a neighbourhood of $B_{\Om}$, to conclude that $q_1 = q_2$.  

This paper will show that this sort of partial data result also holds for the magnetic Schr\"{o}dinger inverse problem.  

Define 
\[
Z_{\Om} = \{x \in \partial \Om | (x - x_0) \cdot \nu(x) = 0\}.
\]

The main results of this work are the following two theorems.

\begin{thm}\label{mainthm}
Let $W_1$ and $W_2$ be $C^2$ vector fields on $\overline{\Om}$, and let $q_1$ and $q_2$ be $L^\infty$ functions on $\Om$.  Let $U \subset \partial \Om$ be a neighbourhood of $F_{\Om}$, and let $E \subset \partial \Om$ be a compact subset of $F_{\Om} \setminus Z_{\Om}$.  Suppose
\[
\Lambda_{W_1,q_1} g |_{U} = \Lambda_{W_2,q_2} g |_{U}
\]
for all $g \in H^{\half}(\partial \Om)$ with support contained in $\overline{\partial \Om \setminus E}$.

Then $q_1 = q_2$, and $dW_1 = dW_2$.

\end{thm}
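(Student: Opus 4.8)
The plan is to follow the standard complex-geometric-optics (CGO) strategy for magnetic Schr\"odinger inverse problems, but with the Carleman weight and its associated CGO solutions modified so that the remainder estimates localize the integration-by-parts identity away from the set $E$. First I would record the usual integral identity: if $u_1$ solves $\Laqone u_1 = 0$ in $\Om$ and $u_2$ solves the adjoint equation $\Laqtwo^* u_2 = 0$, then the hypothesis $\Lambda_{W_1,q_1} g|_U = \Lambda_{W_2,q_2} g|_U$ for $g$ supported in $\overline{\partial\Om\setminus E}$ forces
\[
\int_{\Om} \bigl( (D+W_1)^2 u_1 \cdot \overline{u_2} - u_1 \cdot \overline{(D+W_2)^2 u_2} \bigr) \, dx
  + \text{(boundary terms on } \partial\Om\setminus U \text{ and on } E)
\]
to vanish, which after the usual manipulation (and using $dW$-gauge invariance to arrange $(W_1-W_2)\cdot\nu = 0$ on $\partial\Om$) yields
\[
\int_{\Om} \Bigl( (W_1 - W_2)\cdot(D u_1\,\overline{u_2} - u_1\,\overline{D u_2}) + (|W_1|^2 - |W_2|^2 + q_1 - q_2)\, u_1\overline{u_2} \Bigr)\, dx = 0,
\]
provided the CGO solutions $u_1, u_2$ have boundary traces controlled on the ``bad'' pieces of $\partial\Om$ — i.e., $u_1$ (resp.\ $u_2$) must be small on a neighbourhood of $B_\Om$ and, crucially, controlled near $E$ so that the extra boundary term over $E$ drops out.

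Next I would construct the CGO solutions. Using the limiting Carleman weight $\ph(x) = \log|x - x_0|$, I would build solutions of the form $u = e^{(\ph + i\psi)/h}(a + r)$ where $a$ is a WKB amplitude carrying the magnetic phase correction (solving a $\bar\partial$-type or transport equation so that the first-order term $W\cdot\nabla$ is absorbed, exactly as in [DKSU]/[NSuU]), and $r$ is an $O(h)$ remainder in the appropriate semiclassical Sobolev space. The novelty, following the paper's stated method, is that instead of the plain Carleman estimate of [DKSU] I would use its conjugation by a pseudodifferential-like operator — effectively replacing $e^{\ph/h}$ by $e^{\ph/h}$ times a Fourier-integral-type factor that shifts the ``forbidden'' region — to obtain a Carleman estimate with boundary control that is strong enough to kill the Neumann data on $E$ while only needing Dirichlet data supported off $E$. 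This conjugated estimate is what lets the geometric condition $E \subset F_\Om \setminus Z_\Om$ (so $E$ is strictly in the interior of the front, away from the ``equator'' $Z_\Om$) be exploited: near such $E$ the weight $\ph$ has strictly negative normal derivative with a uniform lower bound, so the boundary Carleman term there has a favourable sign and can be absorbed.

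Then, with these solutions plugged into the integral identity, I would extract the coefficients by stationary phase. Choosing $\psi$ and the amplitudes so that the phases of $u_1$ and $\overline{u_2}$ combine to give oscillation in a chosen direction $\xi$ transverse to the limiting Carleman weight's level sets, the leading term of the identity as $h \to 0$ produces (after the magnetic-phase conjugation has turned the first-order term into a closed-form expression) the attenuated/nonlinear Radon-type transform of $dW_1 - dW_2$ along a family of curves/lines; its vanishing, together with the simple-connectedness of $\Om$, forces $dW_1 = dW_2$. Once the magnetic parts agree one can gauge them away and repeat the argument with the now first-order-free identity to get $q_1 = q_2$, exactly as in the $W\equiv 0$ case of [KSU]. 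The main obstacle I anticipate is the second step: proving the \emph{conjugated} Carleman estimate with the correct boundary terms — one must show that conjugating the [DKSU] estimate by the pseudodifferential-like operator preserves the bulk coercivity (the commutators and the error from the non-smooth symbol must be lower order in $h$) while genuinely improving the boundary contribution on $E$; controlling these pseudodifferential errors near $Z_\Om$, where the normal derivative of $\ph$ degenerates, is the delicate point and is presumably why $E$ must be taken compact in $F_\Om \setminus Z_\Om$ rather than all of $F_\Om$.
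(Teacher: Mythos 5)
Your high-level outline (CGO solutions adapted to $E$, a Carleman estimate modified by conjugation with pseudodifferential-like operators, then the [DKSU] slice-by-slice recovery of $dW$ and $q$) matches the paper's strategy, but two load-bearing steps are misidentified in a way that would make the argument fail as written. First, the support restriction on $g$ does not enter through ``an extra boundary term over $E$ dropping out'' or through the trace being merely \emph{controlled} near $E$: the hypothesis of the theorem can only be invoked for Dirichlet data supported in $\overline{\partial\Om\setminus E}$, so the CGO solution $\tilde u_2$ whose trace you feed into the DN maps must vanish \emph{identically} on $E$. Your ansatz $u=e^{(\ph+i\psi)/h}(a+r)$ cannot achieve this, since the transport equation determines $a$ up to a holomorphic factor and generically $a|_E\neq 0$, while $r$ is only $O(h)$ small. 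The paper (following [KSU]) adds a reflected term: $u=e^{(\ph+i\psi)/h}(a+r)-e^{\ell/h}b$, where $\ell$ solves an approximate eikonal equation with $\ell|_E=\ph+i\psi$, $\nd\ell|_E=-\nd(\ph+i\psi)|_E$, so that $\mathrm{Re}\,\ell=\ph-k$ with $k\simeq\mathrm{dist}(\cdot,E)$, and $b|_E=a|_E$; the combination vanishes on $E$ exactly and the correction is exponentially damped off $E$. This is the hypothesis $E\subset F_\Om\setminus Z_\Om$ at work: $\nd\ph$ is bounded away from zero on $E$, which is what lets one solve for the coefficients of $\ell$ and guarantees $k>0$ off $E$.

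Second, you have conflated the two distinct Carleman estimates in the proof. The conjugated estimate (the paper's Theorem 1.4, $h\|w\|_{L^2(\Om)}\lesssim\|\Lphaq w\|_{H^{-1}(\Om')}$ with $E\subset\partial\Om'$) is not used to ``kill the Neumann data on $E$'' in the boundary integral; its role is, via Hahn--Banach and duality against $H^{-1}(\Om')$, to produce the remainder $r$ as an element of $H^1_0(\Om')$ — hence in $H^1$ (needed because the first-order magnetic term cannot be handled with a mere $L^2$ remainder) \emph{and} with $r|_E=0$ (needed so that the full CGO solution still vanishes on $E$). The boundary term over $\partial\Om\setminus U$ in the integral identity is then estimated with the \emph{original} [DKSU] boundary Carleman estimate, using that $F_\Om\subset U$ so the unfavourable front-face contribution vanishes. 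Your proposal does not explain why an $H^{-1}$ right-hand side is needed, nor how $r|_E=0$ is obtained, and these are precisely the points the conjugation by the operators $J_s$, $J_\ell$ is designed to address. (Your closing remark about the difficulty near $Z_\Om$ is directionally right, but the compactness of $E$ in $F_\Om\setminus Z_\Om$ is used to build the enlarged domain $\Om'$ and the reflected phase $\ell$, not to absorb pseudodifferential errors in a boundary term.)
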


\begin{thm}\label{mainthm2}
Let $W_1$ and $W_2$ be $C^2$ vector fields on $\overline{\Om}$, and let $q_1$ and $q_2$ be $L^\infty$ functions on $\Om$.  Let $U \subset \partial \Om$ be a neighbourhood of $B_{\Om}$, and let $E \subset \partial \Om$ be a compact subset of $B_{\Om} \setminus Z_{\Om}$.  Suppose
\[
\Lambda_{W_1,q_1} g |_{U} = \Lambda_{W_2,q_2} g |_{U}
\]
for all $g \in H^{\half}(\partial \Om)$ with support contained in $\overline{\partial \Om \setminus E}$.

Then $q_1 = q_2$, and $dW_1 = dW_2$.

\end{thm}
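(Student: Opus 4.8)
The plan is to deduce Theorem~\ref{mainthm2} from Theorem~\ref{mainthm} by transposition, exploiting the fact that the formal transpose of the Dirichlet--Neumann map of $\Laq$ is again the Dirichlet--Neumann map of a magnetic Schr\"{o}dinger operator, together with the symmetry between $\frontom$ and $\backom$. First I would record the Green-type identity: if $u$ solves $\Laq u=0$ on $\Om$ and $v$ solves $\mathcal{L}_{\bar W,\bar q}v=0$ on $\Om$, then integration by parts (the first-order boundary terms assembling into the magnetic normal derivatives, so the contributions of $W\cdot\nu$ cancel) gives
\[
\int_{\partial\Om}(\Lambda_{W,q}u|_{\partial\Om})\,\bar v\,dS \;=\; \int_{\partial\Om} u|_{\partial\Om}\,\overline{\Lambda_{\bar W,\bar q}v|_{\partial\Om}}\,dS,
\]
that is, $\Lambda_{W,q}^{*}=\Lambda_{\bar W,\bar q}$ for the $L^2(\partial\Om)$ pairing. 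Since $\mathcal{L}_{\bar W,\bar q}$ is the $L^2$-adjoint of $\Laq$, zero is also not an eigenvalue of $\mathcal{L}_{\bar W_j,\bar q_j}$ on $\Om$, so $\Lambda_{\bar W_j,\bar q_j}$ is well defined, and the coefficients $\bar W_j\in C^2$, $\bar q_j\in L^\infty$ are admissible.

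Next I would transfer the hypothesis. Fix an open neighbourhood $U'$ of $\backom$ with $\overline{U'}\subset U$. If $f\in H^{\half}(\partial\Om)$ is supported in $\overline{\partial\Om\setminus E}$ and $g\in H^{\half}(\partial\Om)$ is supported in $U$, then applying the identity above to $(W_1,q_1)$ and to $(W_2,q_2)$ and subtracting yields
\[
\int_{\partial\Om} f\,\overline{(\Lambda_{\bar W_1,\bar q_1}-\Lambda_{\bar W_2,\bar q_2})g}\,dS \;=\; \int_{\partial\Om}\bigl((\Lambda_{W_1,q_1}-\Lambda_{W_2,q_2})f\bigr)\bar g\,dS \;=\; 0,
\]
the last step because $\operatorname{supp}g\subset U$ and $\Lambda_{W_1,q_1}f=\Lambda_{W_2,q_2}f$ on $U$ by hypothesis. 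Letting $f$ range over all functions supported in $\overline{\partial\Om\setminus E}$ shows that $(\Lambda_{\bar W_1,\bar q_1}-\Lambda_{\bar W_2,\bar q_2})g$ vanishes on the open set $\partial\Om\setminus E$, for every $g$ supported in $U$, and in particular for every $g$ supported in $\overline{U'}$.

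Now I would invoke Theorem~\ref{mainthm} for the coefficients $(\bar W_1,\bar q_1)$ and $(\bar W_2,\bar q_2)$, taking the measurement neighbourhood to be $\partial\Om\setminus E$ and the excluded set to be $\partial\Om\setminus U'$. These choices are legitimate: $E$ is a closed set disjoint from $\frontom$, so $\partial\Om\setminus E$ is an open neighbourhood of $\frontom$; and $\partial\Om\setminus U'$ is a compact subset of $\partial\Om\setminus\backom=\frontom\setminus Z_\Om$. Since $\overline{\partial\Om\setminus(\partial\Om\setminus U')}=\overline{U'}$, the input-support condition of Theorem~\ref{mainthm} is exactly what the previous paragraph verified, so Theorem~\ref{mainthm} yields $\bar q_1=\bar q_2$ and $d\bar W_1=d\bar W_2$. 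Conjugating, and using $d\bar W=\overline{dW}$, we conclude $q_1=q_2$ and $dW_1=dW_2$.

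The argument contains no serious obstacle once Theorem~\ref{mainthm} is in hand; the only place that demands care is the topological bookkeeping --- matching the ``supported in $\overline{\partial\Om\setminus E}$'' conditions through the transposition and the shrinkage $\overline{U'}\subset U$, and checking that the sets fed into Theorem~\ref{mainthm} satisfy its hypotheses. If a self-contained proof were wanted instead, one could repeat the proof of Theorem~\ref{mainthm} verbatim with the limiting Carleman weight $\log|x-x_0|$ replaced by $-\log|x-x_0|$, which is again a limiting Carleman weight and which interchanges $\frontom$ and $\backom$ because $\nd\bigl(-\log|x-x_0|\bigr)=-\nd\log|x-x_0|$; in that route the real work is re-running the modified, pseudodifferentially conjugated Carleman estimate and the complex geometrical optics construction in the mirror-image geometry, where $E$ now lies far from $x_0$ rather than near it.
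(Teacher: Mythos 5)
Your proposal is correct, but it takes a genuinely different route from the paper. The paper deduces Theorem \ref{mainthm2} either by the conformal change of variables given by inversion in $x_0$ (which swaps $\frontom$ and $\backom$), or --- the route actually sketched in the Remark at the end of Section 6 --- by re-running the entire Carleman argument with the weight $-\log|x-x_0|$ in place of $\log|x-x_0|$; your ``self-contained'' fallback is exactly that second route. Your main argument instead transposes the hypothesis through the identity $\Lambda_{W,q}^{*}=\Lambda_{\overline{W},\overline{q}}$ (for the $H^{-\half}$--$H^{\half}$ pairing on $\partial\Om$; since $W$ is real here this is just $\Lambda_{W,\overline{q}}$), so that ``measure on a neighbourhood of $\backom$, input supported off a compact piece of $\backom\setminus Z_{\Om}$'' becomes, for the adjoint coefficients, ``measure on the neighbourhood $\partial\Om\setminus E$ of $\frontom$, input supported in $\overline{U'}=\overline{\partial\Om\setminus(\partial\Om\setminus U')}$ with $\partial\Om\setminus U'$ compact in $\frontom\setminus Z_{\Om}$'' --- precisely the hypotheses of Theorem \ref{mainthm}. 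Your topological bookkeeping checks out, as does the observation that $0$ fails to be an eigenvalue of the adjoint operator by the Fredholm alternative. What your route buys is that it is entirely soft: it needs only the standard Green-type integral identity (the same one invoked as \eqref{GF} in Section 8) and no new Carleman estimate, whereas the paper's $-\log r$ route requires reworking the analysis of Sections 2--6 in the mirror-image geometry, and the inversion route requires tracking how $W$, $q$ and the DN map transform under the Kelvin-type change of variables. The price is that your argument proves Theorem \ref{mainthm2} only as a corollary of Theorem \ref{mainthm}, while the paper's second route yields an independent proof.
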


The second theorem is essentially the first theorem after the conformal transformation on $\Om$ given by inversion in $x_0$.  

Roughly speaking, the first theorem says that if the Dirichlet-Neumann map is known on a neighbourhood of the front for functions supported on a neighbourhood of the back, then potentials can be determined.  The second says that if the Dirichlet-Neumann map is known on a neighbourhood of the back for functions supported on a neighbourhood of the front, then the potentials can be determined.  

If the domain $\Om$ is nice enough, then the front can be made arbitrarily small.  For example, if $\Om$ is strongly convex (convex, and the intersection of the boundary with any tangent hyperplane to the boundary consists only of one point), then the front can be contained in any non-empty open subset of the boundary.  This gives us the following corollary.

\begin{cor}
Suppose $\Om$ is a smooth bounded strongly convex domain in $\RnI$.  Let $W_1$ and $W_2$ be $C^2$ vector fields on $\overline{\Om}$, and let $q_1$ and $q_2$ be $L^\infty$ functions on $\Om$.  Then for any non-empty open subset $U$ of the boundary, there exists compact $E \subset U$ such that if 
\[
\Lambda_{W_1,q_1} g |_{U} = \Lambda_{W_2,q_2} g |_{U}
\]
for all $g \in H^{\half}(\partial \Om)$ with support contained in $\overline{\partial \Om \setminus E}$, then $q_1 = q_2$, and $dW_1 = dW_2$.

Alternatively, for any compact proper subset $E$ of the boundary, there exists $U \subset \partial \Om$ with $E \subset U$ such that if 
\[
\Lambda_{W_1,q_1} g |_{U} = \Lambda_{W_2,q_2} g |_{U}
\]
for all $g \in H^{\half}(\partial \Om)$ with support contained in $\overline{\partial \Om \setminus E}$, then $q_1 = q_2$, and $dW_1 = dW_2$.
\end{cor}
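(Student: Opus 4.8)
The plan is to deduce the Corollary directly from Theorem~\ref{mainthm}, using only the geometric flexibility of strongly convex domains in choosing the point $x_0$ relative to which the front $F_{\Om}$ is defined. The key observation is that for a strongly convex $\Om$, by pushing $x_0$ far away in a suitable direction one can make the front $F_{\Om}$ shrink to an arbitrarily small neighbourhood of a single boundary point, and that point can be prescribed.

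First I would establish the geometric claim: given a point $p \in \partial\Om$ and any neighbourhood $U$ of $p$ in $\partial\Om$, there exists $x_0 \notin \overline{\mathrm{conv}(\Om)}$ such that $F_{\Om} \subset U$. To see this, let $\xi$ be the outward unit normal at $p$, which by strong convexity is the unique maximizer of $x \mapsto x \cdot \xi$ on $\overline{\Om}$. For $x_0 = p + R\xi$ with $R$ large, the vector $x - x_0 = (x - p) - R\xi$ has $(x-x_0)\cdot \nu(x) = (x-p)\cdot\nu(x) - R\,\xi\cdot\nu(x)$. Away from $U$, strong convexity forces $\xi \cdot \nu(x)$ to be bounded below by a positive constant (if it were $\le 0$ somewhere outside $U$, compactness and the strict convexity of the boundary would contradict uniqueness of the maximizer of $x\cdot\xi$), while $(x-p)\cdot\nu(x)$ is bounded; hence for $R$ large enough $(x-x_0)\cdot\nu(x) > 0$ on $\partial\Om \setminus U$, so $F_{\Om} \subset U$. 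Also $x_0$ lies outside $\overline{\mathrm{conv}(\Om)} = \overline{\Om}$ for $R$ large since it is past the supporting hyperplane at $p$.

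Next, for the first assertion of the Corollary: given nonempty open $U \subset \partial\Om$, pick any $p \in U$ and shrink $U$ if necessary so its closure is still inside the original open set; apply the geometric claim to get $x_0$ with $F_{\Om} \subset U$, so $U$ is a neighbourhood of $F_{\Om}$. Now I need a compact $E \subset F_{\Om} \setminus Z_{\Om}$ with $E \subset U$; since $F_{\Om}$ is a neighbourhood of $p$ and $Z_{\Om}$ is its boundary-type set $\{(x-x_0)\cdot\nu=0\}$, the interior of $F_{\Om}$ relative to $\partial\Om$ is nonempty and disjoint from $Z_{\Om}$, so choose $E$ to be any closed ball (in $\partial\Om$) around $p$ contained in that interior and in $U$. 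Theorem~\ref{mainthm} then applies verbatim and gives $q_1 = q_2$, $dW_1 = dW_2$. For the second assertion, given a compact proper subset $E \subsetneq \partial\Om$, pick $p \in E$ in the interior of $E$ if $E$ has nonempty interior --- or, more robustly, first enlarge the setup: since $E$ is a proper compact subset, choose a point $p$ and a small neighbourhood, apply the geometric claim to make $F_{\Om}$ a neighbourhood of $E$ not meeting $Z_{\Om}$ is too strong, so instead one argues: shrink to ensure $E \subset \mathrm{int}(F_{\Om}) \setminus Z_{\Om}$ by choosing $x_0$ far enough that $F_{\Om} \supset E$ while still $F_{\Om}$ avoids being all of $\partial\Om$; then set $U = $ any neighbourhood of $B_{\Om}$'s complement... and here I would instead simply take $U$ to be the given requirement's $U \supset E$ by letting $U$ be a neighbourhood of $F_{\Om}$ as produced above, noting $E \subset F_{\Om} \subset U$.

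The main obstacle I anticipate is the bookkeeping in the second assertion: one must simultaneously arrange, by a single choice of $x_0$, that $F_{\Om}$ contains the prescribed compact set $E$, that $E$ avoids $Z_{\Om}$, and that $U$ can be taken to contain $E$ while still being a genuine neighbourhood of $F_{\Om}$ --- these are compatible precisely because strong convexity lets $F_{\Om}$ range over arbitrarily small (or, by choosing $x_0$ closer, moderately sized) neighbourhoods of a chosen point, but making this uniform over an arbitrary compact proper $E$ requires that $E$ itself be coverable by such a front, which uses that $E \ne \partial\Om$ so there is a boundary point $x_0$ can be placed "opposite" to. The convexity estimates ($\xi\cdot\nu(x)$ bounded below off a neighbourhood of $p$, uniformly) are routine compactness arguments once strong convexity is invoked, so I would state them without belaboring the details.
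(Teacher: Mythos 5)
There are two genuine gaps. First, your geometric lemma is proved backwards. Sending $x_0 = p + R\xi$ to infinity does \emph{not} shrink the front: as $R \to \infty$ the condition $(x-x_0)\cdot\nu(x) \leq 0$ tends to $\xi\cdot\nu(x) \geq 0$, so $F_{\Om}$ tends to the entire "half" of the boundary whose normal has nonnegative component along $\xi$ (for the unit sphere, a full hemisphere). Moreover your claimed estimate that $\xi\cdot\nu(x)$ is bounded below by a positive constant off a neighbourhood of $p$ is simply false --- on the sphere $\xi\cdot\nu(x) = -1$ at the antipode of $p$, and this contradicts nothing about the maximizer of $x\cdot\xi$ --- and even if it held, it would force $(x-x_0)\cdot\nu(x) = (x-p)\cdot\nu(x) - R\,\xi\cdot\nu(x)$ to be very \emph{negative} off $U$, putting the complement of $U$ inside the front, the opposite of what you want. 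The correct move is to bring $x_0$ \emph{close} to $p$: take $x_0 = p + \e\xi$ with $\e$ small. Strong convexity gives $(x-p)\cdot\nu(x) > 0$ for every $x \in \partial\Om$ with $x \neq p$ (equality would put $p$ on the tangent hyperplane at $x$), hence $(x-p)\cdot\nu(x) \geq c > 0$ on the compact set $\partial\Om\setminus U$, and then $(x-x_0)\cdot\nu(x) \geq c - \e > 0$ there for $\e < c$, so $F_{\Om} \subset U$. With that repair, your treatment of the first assertion (choose $E$ a small closed cap about $p$ inside the open set $F_{\Om}\setminus Z_{\Om}$ and apply Theorem \ref{mainthm}) is the intended argument.

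Second, the second assertion cannot be reached through Theorem \ref{mainthm} by arranging $E \subset F_{\Om}$, which is where your write-up visibly stalls. For the unit ball the front with respect to any admissible $x_0$ is strictly smaller than a hemisphere, so no choice of $x_0$ makes $F_{\Om}$ contain a compact set covering, say, three quarters of the sphere; the requirement $E \subset F_{\Om}\setminus Z_{\Om}$ is unattainable for general compact proper $E$. The intended route is Theorem \ref{mainthm2}. Since $E$ is a proper closed subset of the compact boundary, pick $p$ in the non-empty open set $\partial\Om\setminus E$ and, using the corrected lemma, choose $x_0$ so that $F_{\Om}$ lies in a neighbourhood of $p$ disjoint from $E$. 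Then $E \subset \partial\Om\setminus F_{\Om} = B_{\Om}\setminus Z_{\Om}$ and $E$ is compact, and any neighbourhood $U$ of $B_{\Om}$ automatically contains $E$; Theorem \ref{mainthm2} then gives the conclusion.
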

\vspace{4mm}

The first part of the corollary says that in particular, the Dirichlet-Neumann map can be measured on an arbitrarily small subset of the boundary.  The second part of the corollary says that alternatively, the input functions may be restricted to an arbitrarily small subset of the boundary.  

Theorem \ref{mainthm2} can either be proved from Theorem \ref{mainthm} by the change of variables mentioned above, or proved in the same manner as Theorem \ref{mainthm}, making the changes indicated at the end of section 6.  Therefore most of this paper will be devoted to proving Theorem \ref{mainthm}.  From here on, unless otherwise noted, I will assume $U$, $E$ and $\Om$ are as in Theorem \ref{mainthm}.  

The key to the proof of Theorem \ref{mainthm} is the construction of complex geometrical optics solutions to the system

\begin{equation}\label{pDirichletProblem}
\begin{split}
\Laq u  &= 0 \mbox{ on } \Om\\
u |_{E} &= 0.
\end{split} 
\end{equation} 

In [DKSU], these are constructed using a Carleman estimate for $\Laq$ and a Hahn-Banach argument.  However, the initial Carleman estimate proved in [DKSU] creates $L^2$ solutions.  These turn out to be good enough in the case $W=0$, but not when the magnetic potential is present.  Modifications to the Carleman estimate to create $H^1$ solutions in that paper destroy information about the behaviour of the solutions on the boundary, which explains the difference between the [KSU] and [DKSU] results.  Proving theorems \ref{mainthm} and \ref{mainthm2} will require more careful modification.  The Carleman estimate proved here for $\Laq$ can be described as follows. 

Let $\ph$ be a limiting Carleman weight on $\Om$; that is, a real-valued smooth function which has nonvanishing gradient on $\Om$ and satisfies
\[
\langle \ph'' \grad \ph, \grad \ph \rangle + \langle \ph'' \xi, \xi \rangle = 0
\]
whenever $|\xi| = |\grad \ph|$ and $\grad \ph \cdot \xi = 0$.  Define 
\[
\Lphaq = h^2 e^{\frac{\ph}{h}}\Laq e^{-\frac{\ph}{h}}
\]
Here $h$ is a semiclassical parameter; henceforth all Sobolev spaces and Fourier transforms in this note are semiclassical, unless otherwise specified, with $h$ being the semiclassical parameter.  For the rest of this paper, I will fix $\ph$ to be the logarithmic weight $\ph(x) = \log|x - x_0|$ unless otherwise stated.

I want to prove the following Carleman estimate. 
\begin{thm}\label{mainCarl}
There exists a smooth domain $\Om '$ with $\Om \subset \Om '$, and $E \subset \partial \Om '$, such that if $w \in \Czinf(\Om)$,
\[
h\| w\|_{L^2(\Om)} \lesssim \|\Lphaq w \|_{H^{-1}(\Om ')}.
\]
\end{thm}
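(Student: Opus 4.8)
The plan is to build the estimate in two stages: first establish a semiclassical Carleman estimate for the ordinary conjugated Laplacian $\Lph = h^2 e^{\ph/h} \Lap e^{-\ph/h}$ on an enlarged domain, with a gain of one derivative on the right-hand side, and then absorb the first-order magnetic terms and the zeroth-order term $q$ as perturbations. For the base estimate I would start from the standard [DKSU]-type Carleman estimate $h\|w\|_{L^2(\Om)} \lesssim \|\Lph w\|_{L^2(\Om)}$ valid for the limiting weight $\ph(x) = \log|x-x_0|$ and $w \in \Czinf$, and upgrade the norm on the right from $L^2$ to $H^{-1}$. The mechanism for this upgrade is the key new ingredient advertised in the abstract: conjugate $\Lph$ by a pseudodifferential-like operator, of the form roughly $e^{\pm \psi(x,hD)/h}$ or a tangential Fourier multiplier smoothing in the direction dual to $\grad\ph$, which trades a factor of $h$ in the $L^2$ estimate for a factor of $\langle hD\rangle^{-1}$. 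This is where the enlarged domain $\Om'$ enters: extending to $\Om'$ gives room to choose the conjugating symbol so that it is elliptic near $\Om$ and the commutator errors are controlled; one also needs $E \subset \partial\Om'$ so the geometry of the boundary piece on which solutions must vanish is preserved under the enlargement.

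The steps, in order, are: (1) fix $\ph = \log|x-x_0|$, record its limiting-weight property and compute the semiclassical symbol of $\Lph$, identifying its real and imaginary parts $p = |\xi|^2 - |\grad\ph|^2$ and $\{ \text{Re}, \text{Im}\}$-type Poisson bracket that the limiting condition forces to vanish on the characteristic set; (2) choose the enlarged domain $\Om' \supset \Om$ (smooth, still with $\ph$ a limiting weight, and with $E$ embeddable in $\partial\Om'$), and choose the conjugating operator $G_h$ built from a symbol supported where we have room, designed so that $G_h^{-1}\Lph G_h$ differs from $\Lph$ by terms that are either $O(h)$ relative gain or lower order; (3) prove the $L^2 \to L^2$ Carleman estimate with the extra $h$, using the standard positive-commutator / square-root-trick argument (or cite the [DKSU] estimate on $\Om'$); (4) combine with the conjugation to obtain $h\|w\|_{L^2} \lesssim \|\Lph w\|_{H^{-1}(\Om')}$; (5) write $\Lphaq = \Lph + h X + h^2 \tilde q$ where $X$ is the first-order magnetic term $2W\cdot hD + (\text{div terms})$ and estimate $\|hXw\|_{H^{-1}} \lesssim h\|w\|_{L^2}$ using that $X$ has $C^1$ (hence bounded) coefficients and one derivative to spare, and $\|h^2\tilde q w\|_{H^{-1}} \lesssim h^2\|w\|_{L^2}$; (6) absorb both perturbations into the left side for $h$ small, yielding the theorem.

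The main obstacle is step (2)–(4): designing the pseudodifferential conjugation so that it genuinely produces the $H^{-1}$ gain without destroying either the ellipticity needed to run the positive-commutator argument or the control near $E$ that later allows the Hahn–Banach construction of CGO solutions vanishing on $E$. In [DKSU] the analogous $H^1$ upgrade "destroys information about the behaviour of the solutions on the boundary," so the delicate point is to gain the derivative on the right-hand side (the $H^{-1}$ norm of the data) rather than on the left (which would force an $H^1$ solution space and lose boundary information). Concretely I expect the work to be in choosing the symbol of the conjugator to be a function of the tangential frequency along level sets of $\ph$, supported in the collar $\Om' \setminus \Om$ together with a neighborhood of $\Om$, and in checking that all commutators $[\Lph, G_h]$ and the error $G_h^{-1}[\Lph,G_h]$ are $O(h)$ in the relevant operator norms; the limiting-weight identity for $\ph$ is exactly what makes these error terms manageable. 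The perturbation steps (5)–(6) should be routine once the derivative-gaining estimate for $\Lph$ is in hand, since $W \in C^2$ and $q \in L^\infty$ give more than enough regularity.
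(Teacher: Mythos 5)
Your outer scaffolding is right and matches the paper: fix $\ph=\log|x-x_0|$, prove a derivative-gaining estimate for the free conjugated operator on an enlarged domain $\Om'$ chosen so that $\partial\Om\cap\partial\Om'=E$, and then absorb the magnetic first-order term and $q$ perturbatively, since $h(W\cdot hD+hD\cdot W)$ maps $L^2$ to $H^{-1}$ with norm $O(h)$. Steps (5)--(6) of your plan are exactly what the paper does in Section 6. But the core of the theorem --- your steps (2)--(4), which you yourself flag as ``the main obstacle'' --- is left as a guess, and the guessed mechanism is not the one that works. Two concrete problems. First, your starting point is wrong: to end with $h\|w\|_{L^2}\lesssim\|\Lph w\|_{H^{-1}}$ you must shift the [DKSU] estimate down by one derivative on \emph{both} sides simultaneously, so the correct input is the $H^1$ version $\frac{h}{\sqrt{\e}}\|w\|_{H^1}\lesssim\|\Lphe w\|_{L^2}$ (Lemma 2.2 of the paper, with the convexified weight $\ph+\ph^2/2\e$, which you omit), not the $L^2\to L^2$ version. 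Second, the conjugating object is not an exponential $e^{\pm\psi(x,hD)/h}$ or a tangential smoothing multiplier. After changing variables so that $E$ lies in the hyperplane $r=1$, the paper splits $w=w_s+w_\ell$ by tangential frequency and builds, for each regime, a genuinely first-order operator $J=h\partial_r+\frac{1}{r}F(hD_\th)$ whose symbol $F$ is an approximate root of the quadratic (in the radial symbol) characteristic polynomial of the conjugated operator. One applies the $H^1\to L^2$ estimate to $J^{-1}w$ and converts $\|\mathcal{L}J^{-1}w\|_{L^2}$ into $\|J\mathcal{L}J^{-1}w\|_{H^{-1}_r}$.

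The step your proposal has no analogue of, and which is where the real work lies, is that $J$ has a nontrivial cokernel on the half-space $r\geq 1$: one only gets $\|Ju\|_{H^{-1}_r}\simeq\|u-g\|_{L^2}$, where $g$ is the projection of $u$ onto functions annihilated by $J$ (Lemmas 3.4 and 5.3). Showing $\|g\|_{L^2}\leq\frac12\|u\|_{L^2}$ for $u=\mathcal{L}J^{-1}w$ is precisely why the frequency splitting and the two different branches of the square root in $F$ are needed: at small frequencies one uses radial ellipticity of the operator, and at large frequencies one uses an exact pseudodifferential factorization $\mathcal{L}\approx(h\partial_r-\frac1rT_{G_+})(1+|\g_f|^2)(h\partial_r-\frac1rT_{G_-})$. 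Without controlling this kernel contribution, the chain $\|\mathcal{L}J^{-1}w\|_{L^2}\lesssim\|J\mathcal{L}J^{-1}w\|_{H^{-1}}\lesssim\|\mathcal{L}w\|_{H^{-1}}$ simply fails, and a generic conjugation of the kind you describe gives you no handle on it. There is also a gluing layer you do not mention (local estimates under near-constancy of the graph data, then a partition of unity over graph pieces covering $E$), but that part is routine; the kernel/factorization argument is the missing idea.
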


Theorem \ref{mainCarl} will be proved over the next five sections.  In section 7, I will use this estimate to construct solutions to \eqref{pDirichletProblem}.  Once these are constructed, the proof of Theorem 1 follows by more or less the identical argument as in [DKSU].  That argument is presented in section 8 for completeness.  
\vspace{4mm}

\noindent \textbf{Acknowledgements}.  This research was partially supported by a Doctoral Postgraduate Scholarship from the Natural Science and Engineering Research Council of Canada.  The author would also like to thank Carlos Kenig for his guidance, support, and patience.

\section{An Initial Carleman Estimate}


I want to begin by considering a special version of Theorem \ref{mainCarl}, where the set $E$ coincides with a graph.  Without loss of generality, I will assume $x_0 = 0$.  We can equip $\RnI$ with spherical coordinates $(r,\th)$, with $r \in [0,\infty)$ and $\th \in S^n$.  

Define
\[
\Lphaqe = e^{\frac{\ph^2}{2\e}} \Lphaq e^{-\frac{\ph^2}{2\e}} 
\]

\begin{prop}\label{specCarl}
Suppose that $f : S^n \rightarrow (0,\infty)$ is a $\Cinf$ function such that $\Om$ lies entirely in the region $A_O = \{ (r,\th) | r \geq f(\th) \} \subset \RnI$, and $E$ is a subset of the graph $r = f(\th)$. (See the diagram below.)  If $w \in \Czinf(\Om)$, then
\[
\frac{h}{\sqrt{e}}\| w\|_{L^2(\Om)} \lesssim \|\Lphaqe w \|_{H^{-1}(A_O)}.
\]
\end{prop}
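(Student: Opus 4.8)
The plan is to reduce Proposition~\ref{specCarl} to the basic Carleman estimate for the Laplacian with the logarithmic weight established in [DKSU], after peeling off the magnetic and potential terms and absorbing the effect of the Gaussian conjugating factor $e^{\ph^2/(2\e)}$. First I would expand $\Lphaqe$ explicitly. Writing $\Lph = h^2 e^{\ph/h}(-\Lap)e^{-\ph/h}$ for the conjugated Laplacian, we have $\Lphaq = \Lph + h^2\bigl(2W\cdot D + D\cdot W + W^2 + q\bigr)$ plus the conjugation of the first-order magnetic terms, which produces extra contributions of the form $h\, e^{\ph/h}(\text{stuff})e^{-\ph/h}$ involving $\grad\ph$; all of these are $O(h)$ relative operators in the semiclassical $H^1 \to L^2$ scale, so they are perturbatively small once we have an estimate with a clean $h$ on the left and control of $\|w\|_{H^1}$ on the right. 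Then I would conjugate by $e^{\ph^2/(2\e)}$: since $\ph$ is smooth and bounded on $\overline{\Om}$, this is multiplication by a smooth positive function, and $e^{\ph^2/(2\e)}\, h\grad\, e^{-\ph^2/(2\e)} = h\grad - \tfrac{h}{\e}\ph\grad\ph$, so the conjugation introduces a first-order term of size $O(h/\e)$. The upshot is
\[
\Lphaqe = \Lphe + \frac{h^2}{\e}A_1 + \frac{h^2}{\e^2}A_0 + h^2 B_1 + h^2 B_0 + h\,C,
\]
where $\Lphe$ is the conjugated Laplacian, the $A_j, B_j$ are bounded-coefficient differential operators of the indicated order independent of $h,\e$, and $C$ collects the $O(h)$ magnetic remainders.

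The core step is the Carleman estimate for $\Lphe$ itself. Here I would invoke the logarithmic-weight Carleman estimate from [DKSU] (their section on the convexified estimate), which for the weight $\ph=\log|x-x_0|$ and a suitable convexification $\ph_\e = \ph + \tfrac{\ph^2}{2\e}$ gives, for $w\in\Czinf(\Om)$,
\[
\frac{h}{\sqrt{\e}}\,\|w\|_{L^2(\Om)} + h\,\|h\grad w\|_{L^2(\Om)} \;\lesssim\; \|\Lphe w\|_{L^2(\Om)},
\]
the gain being that the convexification makes the relevant Poisson bracket positive definite of size $\sim 1/\e$. The reason the estimate is stated with the $H^{-1}(A_O)$ norm on the right, rather than $L^2(\Om)$, is the boundary condition $w|_E = 0$ combined with the geometry: because $E$ lies on the graph $r=f(\th)$ and $\Om \subset A_O$, one can extend $w$ by zero across $E$ into the larger region $A_O$, and the Carleman estimate on $A_O$ — whose boundary portion near $E$ is noncharacteristic for the weight because $(x-x_0)\cdot\nu \ne 0$ away from $Z_\Om$ — allows the right-hand side to be measured in the weaker $H^{-1}(A_O)$ norm via an integration-by-parts/duality argument. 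Concretely, I would test $\Lphe w$ against $H^1_0(A_O)$ functions, use that $w$ extended by zero is in $H^1(A_O)$ with support not meeting $\pd A_O \setminus E$, and thereby trade two derivatives for one.

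Finally I would absorb the perturbations. Applying the $\Lphe$ estimate to $w$ and substituting $\Lphe w = \Lphaqe w - (\text{perturbations})w$, the terms $\tfrac{h^2}{\e}A_1 w$, $h^2 B_1 w$ contribute $\lesssim \tfrac{h}{\sqrt\e}\cdot(\sqrt\e\, h\|\grad w\|)$-type quantities and $\tfrac{h^2}{\e^2}A_0 w$, $h^2 B_0 w$, $hCw$ contribute lower-order multiples of $\tfrac{h}{\sqrt\e}\|w\|_{L^2}$ and $h\|h\grad w\|_{L^2}$; for $h$ small (and $\e$ fixed, or $h/\e$ small) these are all strictly dominated by the left-hand side and can be moved over, leaving $\tfrac{h}{\sqrt\e}\|w\|_{L^2(\Om)}\lesssim\|\Lphaqe w\|_{H^{-1}(A_O)}$. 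I expect the main obstacle to be the second step: getting the estimate with the $H^{-1}(A_O)$ norm (rather than $L^2(\Om)$) on the right while keeping the $1/\sqrt\e$ gain, since this requires simultaneously (i) the convexity/Poisson-bracket computation for the logarithmic weight that produces the $1/\e$, (ii) a careful treatment of the boundary term on $E$ using the noncharacteristic condition $E\cap Z_\Om=\emptyset$, and (iii) verifying that dualizing to $H^{-1}$ costs only a bounded factor and does not eat the gain — this is exactly where the geometry of the front and the set $Z_\Om$ enters, and where the argument departs from the pure-interior Carleman estimates.
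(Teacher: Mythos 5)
Your outer scaffolding — expanding $\Lphaqe$, isolating the conjugated Laplacian $\Lphe$, and absorbing the magnetic and zeroth-order perturbations at the end — matches what the paper does in Section 6, and you are right that the $H^{-1}$ norm on the right is what lets the first-order magnetic terms be absorbed with only $\|w\|_{L^2}$ on the left. But there is a genuine gap at the step you yourself flag as the main obstacle: you assert that the right-hand side can be downgraded from $\|\Lphe w\|_{L^2(\Om)}$ to $\|\Lphe w\|_{H^{-1}(A_O)}$ ``via an integration-by-parts/duality argument,'' testing against $H^1_0(A_O)$ and ``trading two derivatives for one.'' No such soft argument is available, and this is precisely the problem the paper is written to solve. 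The difficulty is quantitative: $\Lphe$ is second order, so passing from $L^2$ to $H^{-1}$ on the right costs one full derivative, and one must show that this loss still leaves the gain $\frac{h}{\sqrt{\e}}\|w\|_{L^2}$ intact. A duality pairing $|(\Lphe w, v)|/\|v\|_{H^1}$ requires producing a test function $v$ adapted to $w$ for which the pairing recovers the Carleman lower bound, and the naive choices fail; the introduction of the paper notes explicitly that the crude modifications of the [DKSU] estimate that achieve $H^{-1}$ control ``destroy information about the behaviour of the solutions on the boundary.''

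What the paper actually does (Sections 2--5) is: flatten the graph $r=f(\th)$ by two changes of variables so that $E$ sits on $\{r=1\}$ in $\Rniip$; split $w=w_s+w_\ell$ into small and large angular frequencies; and for each piece construct an operator $J$ of the form $\widehat{Ju}=(F(\xi)/r+h\partial_r)\hat u$ whose symbol $F$ is (approximately) a root of the principal symbol of $\Lphets$ viewed as a quadratic in the radial derivative. The adjoint $J^*$ is an isomorphism $H^1_r\to L^2$, which gives $\|Ju\|_{H^{-1}_r}\simeq\|u-g\|_{L^2}$ where $g$ is the explicit projection onto the kernel of $J$ (Lemma \ref{smallJHneg}); the entire technical content of Sections 4--5 is showing that for $u=\Lphets J^{-1}w_s$ (resp.\ $w_\ell$) this kernel component $g$ is small — using near-ellipticity of the radial operator in the small-frequency regime and a first-order factorization of $\Lphets$ via symbols $G_\pm$ in the large-frequency regime. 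One then conjugates the $L^2$ Carleman estimate by $J$, commutes $J$ past $\Lphets$ up to $O(h)$ errors, and glues the frequency pieces and the coordinate patches back together. Your proposal correctly identifies where the difficulty lives but does not supply the mechanism that resolves it, so as written the central step is unproven.
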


\includegraphics[scale=0.4]{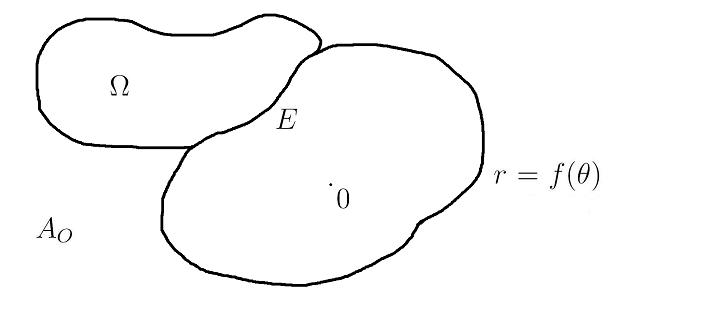}

In addition to $\Lphaq$, define
\begin{eqnarray*}
\Lphaqe &=& e^{\frac{\ph^2}{2\e}} \Lphaq e^{-\frac{\ph^2}{2\e}} \\
   \Lph &=& h^2 e^{\frac{\ph}{h}} \Lap e^{-\frac{\ph}{h}} \\
\mbox{ and } \Lphe &=& e^{\frac{\ph^2}{2\e}} \Lph e^{-\frac{\ph^2}{2\e}}. \\
\end{eqnarray*}
We will need to do some work with a set which is slightly larger than $\Om$, but still bounded.  Let $\Om_2 \subset A_O$ be a smooth bounded domain which contains $\Om$, such that $E \subset \partial \Om_2$, as indicated in the diagram.  Since $0$ lies outside the closure of the convex hull of $\Om$, I can pick $\Om_2$ so $0$ stays outside of the closure of the convex hull of $\Om_2$.  

\includegraphics[scale=0.4]{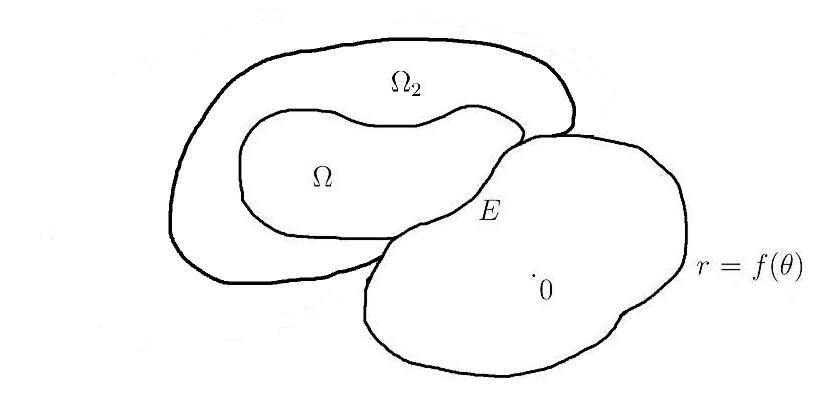}

\noindent We have the following Carleman estimate from [DKSU].  

\begin{lemma}\label{DKSUCarl}
If $w \in \Czinf(\Om_2)$, then 
\[
\frac{h}{\sqrt{\e}} \|w \|_{H^1(\Om_2)} \lesssim \|\Lphe w \|_{L^2(\Om_2)}.
\]
\end{lemma}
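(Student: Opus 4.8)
The plan is to run the standard convexified limiting--Carleman--weight argument of [DKSU]. Writing $e^{\ph/h}e^{\ph^2/(2\e)} = e^{\psi/h}$ with $\psi = \ph + \tfrac{h}{2\e}\ph^2$, one has $\Lphe = h^2 e^{\psi/h}\Lap e^{-\psi/h}$, the Laplacian conjugated by the weight $\psi$ --- a small convex perturbation of the logarithmic limiting Carleman weight $\ph = \log|x-x_0|$. On $\overline{\Om_2}$, which is bounded and stays away from $x_0$, $\psi$ is smooth with $\grad\psi = \grad\ph + O(h/\e)$ and $|\grad\psi|$ bounded above and below.

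First I would split $\Lphe = A + iB$ into formally self-adjoint parts, with principal symbols (up to an irrelevant overall sign) $a = |\xi|^2 - |\grad\psi|^2$ and $b = 2\,\grad\psi\cdot\xi$. Since $w\in\Czinf(\Om_2)$ there are no boundary terms, so
\[
\|\Lphe w\|_{L^2(\Om_2)}^2 = \|Aw\|_{L^2}^2 + \|Bw\|_{L^2}^2 + \big(i[A,B]w,\,w\big),
\]
where $i[A,B] = h\,c^{w} + (\text{lower order})$ with $c = \{a,b\}$. A direct computation gives $\{a,b\} = 4\big(\langle\psi''\xi,\xi\rangle + \langle\psi''\grad\psi,\grad\psi\rangle\big)$, and on the characteristic set $\{a=b=0\}$ the defining identity of the limiting Carleman weight $\ph$ makes the $\ph''$--terms cancel, leaving only the contribution of the convexification: there $\{a,b\} = \tfrac{4h}{\e}\big(1 + O(h/\e)\big)|\grad\psi|^4 \gtrsim h/\e$.

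Next I would turn this symbol positivity into an operator estimate using a microlocal partition of $T^*\Om_2$ into a conic neighbourhood of $\{a=b=0\}$ and its complement. On the first region the commutator term gives $\gtrsim \tfrac{h^2}{\e}\|w\|^2$, which for all sufficiently small $\e$ dominates the $O(h^2)$ symbol--calculus remainder; on the complement $a^2+b^2$ is elliptic --- either of size $\gtrsim 1$ or comparable to $|\xi|^4$ at high frequency --- so $\|Aw\|^2 + \|Bw\|^2$ alone already controls $\tfrac{h^2}{\e}\|w\|_{H^1}^2$ with room to spare. Summing the pieces and absorbing the remaining errors for $h$ small gives
\[
\frac{h^2}{\e}\,\|w\|_{H^1(\Om_2)}^2 \lesssim \|Aw\|_{L^2}^2 + \|Bw\|_{L^2}^2 + \big(i[A,B]w,\,w\big) = \|\Lphe w\|_{L^2(\Om_2)}^2,
\]
which is the asserted estimate.

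The step I expect to be the main obstacle is the positivity of the commutator term on the characteristic variety: this is exactly where the limiting Carleman weight property of $\ph$ enters, and it is the convexifying factor $e^{\ph^2/(2\e)}$ that promotes the borderline vanishing of $\{a,b\}$ into the quantitative gain of size $h/\e$. Tracking the joint $h$-- and $\e$--dependence cleanly through the Poisson bracket and the microlocal positivity argument requires care, since it is the factor $h/\sqrt\e$ --- not merely $h$ --- that will be needed later to absorb the first--order magnetic term when passing from $\Lphe$ to $\Lphaqe$.
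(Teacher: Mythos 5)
Your proposal is correct in outline, but note that the paper itself offers no proof of this lemma: it is imported verbatim from [DKSU] (their convexified Carleman estimate), so the comparison is really with that source. Your sketch is the same core argument — convexify $\ph$ to $\psi = \ph + \tfrac{h}{2\e}\ph^2$, split $\Lphe$ into self- and anti-self-adjoint parts, and extract positivity of $i[A,B]$ of size $h/\e$ on the characteristic set from the failure of $\psi$ to be a limiting Carleman weight — and the symbol computation $\{a,b\} = 4(\langle\psi''\xi,\xi\rangle + \langle\psi''\grad\psi,\grad\psi\rangle) \gtrsim h/\e$ there is right. The one genuine difference is in how the positivity is quantized: [DKSU] proceed by direct integration by parts, writing the cross term as an explicit quadratic form in $w$ and $h\grad w$ whose positivity follows pointwise from the convexification, which keeps the $(h,\e)$-dependence completely explicit and avoids pseudodifferential machinery; you instead invoke a microlocal partition into a neighbourhood of $\{a=b=0\}$ and its elliptic complement plus a Gårding-type argument. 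Your route works, but it forces you to control the $O(h)$ errors from commuting the microlocal cutoffs past $A$ and $B$ and to check that those errors are uniform in $\e$ — the paper explicitly needs the implied constant independent of $\e$, with only the threshold $h_0$ allowed to depend on it — whereas the integration-by-parts version hands you that uniformity for free. Also, be slightly careful with the claim that the elliptic region controls $\tfrac{h^2}{\e}\|w\|_{H^1}^2$ "with room to spare": the cleanest bookkeeping (and what [DKSU] do) is to first prove the $L^2$ estimate and then upgrade to $H^1$ by pairing $\Lphe w$ against $w$ and using the explicit form of $A$ to recover $\|h\grad w\|_{L^2}$.
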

A note on inequalities here:  inequalities of the form $F(w,h) \lesssim G(w,h)$ mean that there exists $h_0 > 0$ independent of $w$, such that for $h \leq h_0$, the inequality $F(w,h) \leq CG(w,h)$ holds for some positive constant $C$ independent of $w$ and $h$.  In the case of Lemma \ref{DKSUCarl}, the constant implied in the $\lesssim$ sign is independent of $\e$ as well.

I can make a change of variables using the map $(r,\th) \mapsto (\frac{r}{f(\th)}, \th)$.  This is a diffeomorphism from $A_O$ to $\RnIB$, where $B$ is the open ball of radius $1$ centred at the origin, with inverse $(r,\th) \mapsto (rf(\th), \th)$.  Let $\Omt$ and $\Omt_2$ be the images of $\Om$ and $\Om_2$, respectively, under this map. This diffeomorphism maps $E$ to a part of the unit sphere $S^n$.  Note that since $0$ is outside of the closure of the convex hull of $\Om_2$, it is also outside of the closure of the convex hull of $\Omt_2$.  

\begin{lemma}\label{cvarCarl}
For $w \in \Czinf(\Omt_2)$, 
\begin{equation}\label{wigglyCarl}
\frac{h}{\sqrt{\e}} \|w \|_{H^1(\Omt_2)} \lesssim \|\Lphet w \|_{L^2(\Omt_2)}
\end{equation}
where
\begin{eqnarray*}
\Lphet &=& \left(1+ |\grad_{S^n}\log f(\th)|_{S^n}^2 \right) h^2 \partial^2_r \\
       & & - \frac{2}{r}\left( \a + (\grad_{S^n}\log f(\th)) \cdot_{S^n} h \grad_{S^n} \right) h \partial_r + \frac{1}{r^2}( \a^2 + h^2 \Lap_{S^n} ) \\
\end{eqnarray*}
and $\a = 1 + \frac{h}{\e}\log(r f(\th))$.  Here $\grad_{S^n}$ is the gradient operator on the unit sphere; $|\cdot|_{S^n}$ and $\cdot_{S^n}$ indicate the use of the Riemannian metric on $S^n$, and $\Lap_{S^n}$ is the Laplace-Beltrami operator on the unit sphere $S^n$.
\end{lemma}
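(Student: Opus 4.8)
The plan is to obtain \eqref{wigglyCarl}, together with the formula for $\Lphet$, by transporting the estimate of Lemma \ref{DKSUCarl} through the diffeomorphism $\Phi\colon A_O\to\RnIB$, $\Phi(r,\th)=(r/f(\th),\th)$, and then identifying the conjugated operator in the new coordinates. There are thus two tasks: a soft one (carrying the norm estimate over) and a computational one (recognizing the transported operator as the one in the statement).

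Transporting the estimate is the soft part. Since $f$ is $\Cinf$ and bounded above and below by positive constants on the compact sphere $S^n$, the pullback $\Phi^{*}$ maps $\Czinf(\Omt_2)$ bijectively onto $\Czinf(\Om_2)$, and on the bounded sets $\Om_2$, $\Omt_2$ (where $r$ stays in a fixed compact interval) it is bi-Lipschitz, in both directions, for the semiclassical $L^2$ and $H^1$ norms, with constants depending only on $f$ and finitely many of its derivatives, hence independent of $h$ and $\e$: $\Phi$ has $h$-independent Jacobian, so the Lebesgue density transforms by a smooth positive bounded function and the Euclidean gradient by a smooth uniformly invertible matrix, the factor $h$ in $h\grad$ simply coming along for the ride. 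Applying Lemma \ref{DKSUCarl} to $\Phi^{*}w$, feeding in these norm equivalences, and using that the smooth positive bounded weight $f(\th)^{2}$ does not affect $L^2$-norms, one gets $\frac{h}{\sqrt{\e}}\|w\|_{H^1(\Omt_2)}\lesssim\|f^{2}\,\Phi_{*}\Lphe\,\Phi^{*}w\|_{L^2(\Omt_2)}$ for $w\in\Czinf(\Omt_2)$ (here $\Phi_{*}\Lphe\,\Phi^{*}$ is just $\Lphe$ expressed in the new coordinates), the constant remaining independent of $\e$ and the new $\Phi$- and $f$-dependent factors being independent of both $h$ and $\e$. Hence \eqref{wigglyCarl} will follow once we show $f^{2}\,\Phi_{*}\Lphe\,\Phi^{*}=\Lphet+R$ with $R$ of lower order.

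To identify the transported operator, note that since $\ph=\log r$ is radial the conjugations by $e^{\ph/h}$ and $e^{\ph^{2}/2\e}$ may be carried out first, in the original spherical coordinates, where they commute with $\Lap_{S^n}$; with $\Lap=\pd_r^{2}+\frac{n}{r}\pd_r+\frac{1}{r^{2}}\Lap_{S^n}$ on $\RnI$ and the scalar identity $h\,e^{\ph/h+\ph^{2}/2\e}\,\pd_r\,e^{-\ph/h-\ph^{2}/2\e}=h\pd_r-\frac{\a}{r}$ one gets, with $r$ still the old radial variable,
\[
\Lphe=\Bigl(h\pd_r-\tfrac{\a}{r}\Bigr)^{2}+\tfrac{nh}{r}\Bigl(h\pd_r-\tfrac{\a}{r}\Bigr)+\tfrac{h^{2}}{r^{2}}\Lap_{S^n},\qquad \a=1+\tfrac{h}{\e}\log r.
\]
Substituting $r^{\mathrm{old}}=r f(\th)$ gives $\pd_r^{\mathrm{old}}=\frac1f\pd_r$ and $\frac1{r^{\mathrm{old}}}=\frac1{rf}$, so $\a$ becomes $1+\frac h\e\log(rf(\th))$ as in the statement, while the sphere derivatives at fixed $r^{\mathrm{old}}$ turn into $\pd_{\th_j}-r\,\pd_{\th_j}(\log f)\,\pd_r$. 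Consequently $\Lap_{S^n}$ acquires a $\pd_r^{2}$-correction $r^{2}|\grad_{S^n}\log f|_{S^n}^{2}\,\pd_r^{2}$ and a mixed correction $-2r(\grad_{S^n}\log f)\cdot_{S^n}\grad_{S^n}\pd_r$, plus first-order remainders; collecting the $\pd_r^{2}$-terms produces the coefficient $1+|\grad_{S^n}\log f|_{S^n}^{2}$, collecting the $h\pd_r$-terms produces the bracket $\a+(\grad_{S^n}\log f)\cdot_{S^n}h\grad_{S^n}$, and the remaining terms assemble into $\frac1{r^{2}}(\a^{2}+h^{2}\Lap_{S^n})$. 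After multiplying through by $f^{2}$ this is exactly the displayed $\Lphet$, up to a remainder $R$.

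The main obstacle is precisely this computation — not any individual step, each of which is routine chain rule, but the bookkeeping needed to see that the leftover pieces making up $R$ (the $\frac{nh}{r}$ term, the first-order remainders from differentiating the coefficients of $\Lap_{S^n}$, and the cross terms in expanding $(h\pd_r-\frac{\a}{r})^{2}$) are genuinely of lower order. Each of them, applied to $w$, is $O(h)\|w\|_{H^1(\Omt_2)}$ on the bounded set $\Omt_2$, hence for $h$ sufficiently small is dominated by the left-hand side $\frac{h}{\sqrt{\e}}\|w\|_{H^1(\Omt_2)}$ of \eqref{wigglyCarl} and may be absorbed; this completes the argument.
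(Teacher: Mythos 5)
Your proposal follows essentially the same route as the paper: apply Lemma \ref{DKSUCarl}, transport the norms through the diffeomorphism $(r,\th)\mapsto(r/f(\th),\th)$ with constants independent of $h$ and $\e$, compute the conjugated operator in the new coordinates by the chain rule, and absorb the first-order remainder. The computation and the norm equivalences are right; the one slip is in the final absorption step: the remainder $O(h)\|w\|_{H^1(\Omt_2)}$ cannot be absorbed into $\frac{h}{\sqrt{\e}}\|w\|_{H^1(\Omt_2)}$ by taking $h$ small, since both sides scale linearly in $h$ and their ratio is $O(\sqrt{\e})$ uniformly in $h$ --- the absorption works by taking $\e$ sufficiently small (which is what the paper does, and is consistent with how the estimate is used later).
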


\begin{proof}
Let $w \in \Czinf(\Om_2)$, and let 
\[
\tilde{w}(r,\th) = w(rf(\th), \th).  
\]
Then $\tilde{w} \in \Czinf(\Omt_2)$.  Now by a change of variables, 
\begin{eqnarray*}
\|\tilde{w}\|_{L^2(\Omt_2)} &=& \int_{S^n} \int_0^{\infty} |w(rf(\th),\theta)|^2 r^n dr \, d\th \\
                            &=& \int_{S^n} \int_{0}^{\infty} |w(r,\theta)|^2 r^n (f(\th))^{-n-1} dr \, d\th \\
                            &\simeq& \int_{S^n} \int_0^{\infty} |w(r,\theta)|^2 r^n dr \, d\th. \\
\end{eqnarray*}
Therefore
\begin{equation}\label{changevar}
\|\tilde{w}\|_{L^2(\Omt_2)} \simeq \|w\|_{L^2(\Om_2)}.
\end{equation}
The constants implied in the $\simeq$ sign depend on $f(\th)$.  In addition, 
\begin{eqnarray*}
\|\tilde{w}\|_{H^1(\Omt_2)}^2 &=& \|\tilde{w}\|_{L^2(\Omt_2)}^2 + \|h\grad \tilde{w}\|^2_{L^2(\Omt_2)} \\
                             &=& \|\tilde{w}\|_{L^2(\Omt_2)}^2 + \|h\partial_r \tilde{w}\|^2_{L^2(\Omt_2)}+\|h\grad_t \tilde{w}\|^2_{L^2(\Omt_2)},
\end{eqnarray*}
where $\grad_t$ is the orthogonal projection of $\grad$ onto the plane orthogonal to the radial direction.  Note that
\[
h\partial_r \tilde{w} = h f(\th) \widetilde{\partial_r w},
\]
and
\[
h\grad_t \tilde{w} = h\widetilde{\grad_t w} + h\widetilde{\partial_r w} \grad_t \frac{1}{f(\th)},
\]
so 

\begin{equation}\label{changevar2}
\|\tilde{w}\|_{H^1(\Omt_2)} \lesssim \|w\|_{H^1(\Om_2)}.
\end{equation}
Since $w(r,\th) = \tilde{w}(\frac{r}{f(\th)}, \th)$, the same argument shows that $\|\tilde{w}\|_{H^1(\Omt_2)} \gtrsim \|w\|_{H^1(\Om_2)}$, and therefore that 
\[
\|\tilde{w}\|_{H^1(\Omt_2)} \simeq  \|w\|_{H^1(\Om_2)}.
\]
where the constants implied in the $\simeq$ sign again depend on $f$.  

Now $\Lphe w \in L^2(\Om_2)$, so using the reasoning in \eqref{changevar} gives us that $\widetilde{\Lphe w} \in L^2(\Omt_2)$, and $\| \Lphe w\|_{L^2(\Om_2)} \simeq \|\widetilde{\Lphe w}\|_{L^2(\Omt_2)}$.  Therefore, by Lemma \ref{DKSUCarl}, 
\[
\frac{h}{\sqrt{\e}} \|\tilde{w} \|_{H^1(\Omt_2)} \lesssim \|\widetilde{\Lphe w} \|_{L^2(\Omt_2)}.
\]
Now a calculation shows that 
\begin{eqnarray*}
\Lphe &=& h^2 \partial_r^2 - r^{-1}\left( 2 - hn + 2\frac{h}{\e}\log r \right) h\partial_r \\
      & & + r^{-2}\left( 1+h-hn+\frac{h^2}{\e^2}((\log r)^2 - \e) + \frac{h^2}{\e}\log r + (2-hn)\frac{h}{\e}\log r +h^2\Lap_{S^n} \right).
\end{eqnarray*}
and then that
\[
\widetilde{\Lphe w} = f^{-2}(\th) \Ltphe \tilde{w} - hE \tilde{w}
\]
where $E$ is a first order semiclassical differential operator with coefficients which have bounds independent of $h$ and $\e$.  Therefore 
\[
\frac{h}{\sqrt{\e}} \|\tilde{w} \|_{H^1(\Omt_2)}  \lesssim \|f^{-2}(\th) \Ltphe \tilde{w} \|_{L^2(\Omt_2)} + h\|\tilde{w}\|_{H^1(\Omt_2)}.
\]

For small enough $\e$, the last term on the right side can be absorbed into the left side.  Also, $|f^{-2}|$ is bounded above, so 
\[
\frac{h}{\sqrt{\e}} \|\tilde{w} \|_{H^1(\Omt_2)}  \lesssim \|\Ltphe \tilde{w} \|_{L^2(\Omt_2)}.
\]
for all $w \in \Czinf(\Om_2)$.  Now any $w \in \Czinf(\Omt_2)$ can be written as $\tilde{v}$ for some $v \in \Czinf(\Om_2)$ just by taking $v(r,\th) = w(\frac{r}{f(\th)}, \th)$.  

This finishes the proof.  

\end{proof}

For the next step, we need to fix coordinates on $\RnIB$.  Since $\overline{\Omt_2}$ lies entirely on one side of a hyperplane through the origin, we can choose Cartesian coordinates $x_1, \ldots, x_{n+1}$  so that $\overline{\Omt_2}$ lies entirely in the intersection of $\RnIB$ with the halfspace $x_{n+1} > 0$.  We have a map $\s: \RnIB \cap \{ x_{n+1} > 0 \} \rightarrow [1, \infty) \times(0, \pi) \times \ldots \times (0, \pi) $ by 
\[
\s(x_1, \ldots, x_{n+1}) = (r, \th_1, \ldots, \th_n)
\]
where
\begin{eqnarray*}
x_1 &=& r\cos \th_1 \\
x_2 &=& r\sin \th_1 \cos \th_2 \\
\vdots & & \\
x_n &=& r\sin \th_1 \ldots \sin \th_{n-1} \cos \theta_n \\
x_{n+1} &=& r\sin \th_1 \ldots \sin \th_n.
\end{eqnarray*}
This fixes a set of spherical coordinates on $\RnIB \cap \{ x_{n+1} > 0 \}$.  Note that this map $\s$ is a diffeomorphism between the two open sets.  Since $\overline{\Omt_2}$ is compactly contained in $\RnIB \cap \{ x_{n+1} > 0 \}$ this diffeomorphism has bounded derivatives on $\Omt_2$.  

We will make another change of variables to the Carleman estimate in Lemma \ref{cvarCarl} to get a estimate for functions supported in $\s(\Omt_2)$.  For this we need some notation for expressing differential operators on $\RnIB \cap \{ x_{n+1} > 0 \}$ in spherical coordinates.  On the portion of the unit sphere $S^n$ in $\R^{n+1}$ which lies in the region $\{ x_{n+1} > 0 \}$, we can express the Riemannian metric in these coordinates.  Then the metric takes the form 
\[ \left( \begin{array}{ccccc}
1      & 0              &        & \ldots & 0 \\
0      & (\sin \th_1)^2 & 0      & \ldots & 0 \\
\vdots & \vdots         & \vdots & \vdots & \vdots  \\
0      & \ldots         &        & 0      & (\sin \th_1 \ldots \sin \th_{n-1})^2 \end{array} \right)\] 
In these coordinates the metric has no dependence on $\th_n$.  Note that in the region near $\th_1 = \ldots = \th_{n-1} = \frac{\pi}{2}$, the metric is nearly the Euclidean metric.  In particular, if $\Lap_{S^n}$ is the Laplace-Beltrami operator on the sphere, then $h^2\Lap_{S^n}$, which has coordinate expression
\[
h^2 \partial_{\th_1}^2 + \frac{h^2}{\sin^2 \th_1}\partial_{\th_2}^2 + \ldots + \frac{h^2}{(\sin \th_1 \ldots \sin \th_{n-1})^2} \partial_{\th_n}^2 + hE
\]
on this domain, for some first order semiclassical differential operator $E$, will differ from $h^2 \Lap_{\th} = h^2\partial_{\th_1}^2 + \ldots +h^2\partial_{\th_n}^2$ by a second order semiclassical differential operator with small coefficients.

Functions in $g \in C^{\infty}_0(\Omt_2)$ can be pushed forward to functions in $C^{\infty}_0(\s(\Omt_2))$ by taking $g(\s^{-1}(r, \th_1, \ldots, \th_n))$.  It will be helpful to think of these pushed forward functions as functions on $\R^{n+1}_{1+} = \{ (r,\th) | \th \in \R^n, r \geq 1\}$. Now we can state the following corollary.

\begin{cor}
Let $\a = 1 + \frac{h}{\e}\log(r f(\th))$, $\b_f$ be a vector field on $\Rniip$ which agrees with the coordinate expression of $\grad_{S^n}\log f(\th)$ on $\s(\Omt_2)$, $\g_f$ be a function on $\Rniip$ which agrees with the coordinate expression of $|\grad_{S^n}\log f(\th)|_{S^n}$ on $\s(\Omt_2)$, and $L_{S^n}$ be a second order differential operator of $\Rniip$ which agrees with the coordinate expression of the Laplacian on the sphere on $\s(\Omt_2)$.

Let 
\begin{eqnarray*}
\Lphets &=& \left(1+ |\g_f|^2 \right) h^2 \partial^2_r \\
       & & - \frac{2}{r}\left( \a + \b_f \cdot h \grad_{\th} \right) h \partial_r + \frac{1}{r^2}( \a^2 + h^2 L_{S^n} ). \\
\end{eqnarray*}
Then for all $w \in \Czinf(\s(\Omt_2))$,  
\begin{equation}\label{flatCarl}
\frac{h}{\sqrt{\e}} \|w \|_{H^1(\s(\Omt_2))} \lesssim \|\Lphets w \|_{L^2(\s(\Omt_2))}
\end{equation}

\end{cor}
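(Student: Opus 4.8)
The plan is to deduce the corollary from Lemma~\ref{cvarCarl} by transporting that estimate through the spherical-coordinate diffeomorphism $\s$. The key point is purely formal: by the way $\b_f$, $\g_f$ and $L_{S^n}$ are defined, the operator $\Lphets$ restricted to $\s(\Omt_2)$ is exactly the coordinate expression of $\Lphet$ in the variables $(r,\th_1,\dots,\th_n)$, so on functions supported in $\s(\Omt_2)$ the two operators agree and no information is lost in passing from one to the other.

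First I would set up the correspondence between functions. Given $w \in \Czinf(\s(\Omt_2))$, put $\tilde w = w\circ\s$; since $\s$ is a diffeomorphism of $\RnIB \cap \{x_{n+1}>0\}$ onto its image and $\overline{\Omt_2}$ is compactly contained in that set, $\tilde w \in \Czinf(\Omt_2)$. Because $\s$ and $\s^{-1}$ have derivatives bounded above (and the Jacobian bounded below) on the relevant compacta, with bounds independent of $h$, the same computation as in \eqref{changevar}--\eqref{changevar2} gives
\[
\|w\|_{L^2(\s(\Omt_2))} \simeq \|\tilde w\|_{L^2(\Omt_2)}, \qquad \|w\|_{H^1(\s(\Omt_2))} \simeq \|\tilde w\|_{H^1(\Omt_2)},
\]
the semiclassical $H^1$ norms transforming with $h$-independent constants precisely because $\s$ is a fixed ($h$-independent) diffeomorphism.

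Next I would record the operator identity. Writing $\Lphet$ in the coordinates fixed by $\s$ leaves $r$ and $\partial_r$ unchanged (the radial variable is preserved by $\s$) and replaces $\grad_{S^n}\log f(\th)$, $|\grad_{S^n}\log f(\th)|_{S^n}$ and $\Lap_{S^n}$ by their coordinate expressions on the part of $S^n$ lying in $\{x_{n+1}>0\}$; these are smooth on $\s(\Omt_2)$ because $\overline{\Omt_2}$ stays away from the coordinate singularities, and the first-order remainders produced by expressing $\Lap_{S^n}$ and $\grad_{S^n}$ in these coordinates are, by definition, the ones absorbed into $L_{S^n}$ and $\b_f$. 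Thus this coordinate operator coincides with $\Lphets$ on $\s(\Omt_2)$, so for $w$ supported there the chain rule gives $(\Lphet\tilde w)\circ\s^{-1} = \Lphets w$ (both functions vanishing outside $\s(\Omt_2)$), whence, applying the $L^2$ change of variables once more, $\|\Lphets w\|_{L^2(\s(\Omt_2))} \simeq \|\Lphet\tilde w\|_{L^2(\Omt_2)}$. Combining this with Lemma~\ref{cvarCarl} applied to $\tilde w$ and with the $H^1$ equivalence above yields \eqref{flatCarl} for every $w \in \Czinf(\s(\Omt_2))$.

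I do not expect a genuine obstacle here: everything reduces to a change of variables under a fixed, nondegenerate diffeomorphism. The only point demanding care is the bookkeeping in the previous paragraph --- checking that ``the coordinate expression of $\Lphet$'' is literally $\Lphets$, i.e.\ that the lower-order error terms generated by rewriting the spherical gradient and Laplacian in the chart $\s$ are exactly those folded into $\b_f$ and $L_{S^n}$ --- together with the standard observation that semiclassical Sobolev norms are comparable under $\s$ with constants independent of $h$ and $\e$, though here they may, and do, depend on $f$ and on the choice of $\Omt_2$.
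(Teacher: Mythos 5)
Your proposal is correct and follows essentially the same route the paper intends: the paper dismisses this corollary with ``the proof follows from the same kind of argument made above,'' meaning exactly the change-of-variables transport through the fixed diffeomorphism $\s$ that you carry out, with the norm comparability coming from the bounded derivatives and nondegenerate Jacobian of $\s$ on the compact set $\overline{\Omt_2}$, and the operator identity holding by the very definitions of $\b_f$, $\g_f$, and $L_{S^n}$ as the coordinate expressions (first-order remainders included) on $\s(\Omt_2)$. No gap.
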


The proof follows from the same kind of argument made above.

The expression for $\Lphets$ is somewhat messy because of its dependence on $\th$.  Therefore I want to first work with a version where the functions of $\th$ that appear in $\Lphets$ are nearly constant in some sense.  Let $e_n$ denote the vector field on $S^n \cap \{ x_{n+1} > 0 \}$ given in coordinates by $(0, \ldots, 0, 1)$.   

\begin{prop}\label{simpleCarl}
Fix $K \geq 0$, and let $\mu > 0$.  Suppose that for all $\th$ such that some $(r,\th) \in \Om_2$, the following conditions hold:
\[
|\sin \th_j - 1| \leq \mu \mbox{ where } j = 1, \ldots, n-1
\]
and
\[
|\grad_{S^n}\log f - K e_n |_{S^n} \leq \mu.
\]
If $\mu$ is small enough, then for all $w \in \Czinf{\s(\Omt)}$,
\[
\frac{h}{\sqrt{\e}}\| w\|_{L^2(\s(\Omt))} \lesssim \|\Lphets w \|_{H^{-1}(\Rniip)}.
\]
\end{prop}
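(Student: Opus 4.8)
The plan is to deduce Proposition \ref{simpleCarl} from the $H^1$--$L^2$ estimate \eqref{flatCarl} of the preceding corollary by conjugating $P:=\Lphets$ with the semiclassical Bessel potential $\langle hD\rangle=(1+|hD|^2)^{\half}$, which trades one derivative on the right-hand side (turning $L^2$ into $H^{-1}$) against enlarging the reference domain from $\s(\Omt_2)$ to $\Rniip$ and tightening the support condition from $\s(\Omt_2)$ to $\s(\Omt)$. Throughout I use that $\|u\|_{H^s}\simeq\|\langle hD\rangle^s u\|_{L^2}$ with $h$-independent constants, and that the logarithmically growing coefficient $\a=1+\frac{h}{\e}\log(rf)$ may be harmlessly replaced outside $\s(\Omt_2)$ by a bounded function. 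The difficulty is that $\s(\Omt)$ is \emph{not} compactly contained in $\s(\Omt_2)$: after the changes of variables these domains share boundary only along $E$, which now lies on the hyperplane $\{r=1\}=\pd\Rniip$, so a single cutoff separating $\s(\Omt)$ from $\pd\s(\Omt_2)$ is unavailable near $E$.

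First I would treat the region away from $\{r=1\}$. Fix $\d>0$ small and let $\psi\in\Cinf(\Rniip)$ satisfy $\psi\equiv 1$ on $\{1\leq r\leq 1+\d\}$ and $\psi\equiv 0$ on $\{r\geq 1+2\d\}$; for $w\in\Czinf(\s(\Omt))$ write $w=\psi w+(1-\psi)w$. The piece $g:=(1-\psi)w$ is supported in the \emph{fixed} compact set $\overline{\s(\Omt)}\cap\{r\geq 1+\d\}$, which is compactly contained in $\s(\Omt_2)\cap\{r>1\}$ (since $\s(\Omt)$ meets $\pd\s(\Omt_2)$ only along $E\subset\{r=1\}$). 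Fixing once and for all $\x\in\Czinf(\s(\Omt_2)\cap\{r>1\})$ with $\x\equiv 1$ near that set and setting $v=\x\langle hD\rangle^{-1}g\in\Czinf(\s(\Omt_2))$, one has $\langle hD\rangle v=g+[\langle hD\rangle,\x]\langle hD\rangle^{-1}g$, whence $\|v\|_{H^1}\gtrsim\|g\|_{L^2}-C_\d h\|w\|_{L^2}$; feeding $v$ into \eqref{flatCarl} and commuting $P$ past $\x$, past $\langle hD\rangle^{-1}$, and past $1-\psi$, all commutator terms are bounded by $C_{\d}\big(h+\tfrac{h^2}{\e}+\tfrac{h^3}{\e^2}\big)\|w\|_{L^2}$ (the powers of $h/\e$ coming from the $\a$-dependent coefficients of $P$), giving
\[
\frac{h}{\sqrt{\e}}\|(1-\psi)w\|_{L^2}\lesssim\|Pw\|_{H^{-1}(\Rniip)}+C_{\d}\Big(h+\tfrac{h^2}{\e}+\tfrac{h^3}{\e^2}\Big)\|w\|_{L^2}.
\]
Choosing $\e$ small (depending on $\d$) and then $h$ small absorbs the error into the left-hand side.

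The genuinely new work is the piece $\psi w$, supported in $\{1\leq r\leq 1+2\d\}$, whose support can accumulate on $E\subset\{r=1\}$ as $w$ varies, so that no cutoff of the previous type works uniformly. Here I would use the smallness hypotheses: on $\{r\leq 1+2\d\}$, after shrinking $\mu$ and $\d$, the bounds $|\sin\th_j-1|\leq\mu$, $|\grad_{S^n}\log f-Ke_n|_{S^n}\leq\mu$, $r-1\leq 2\d$, and the smallness of $h/\e$ let me write $P=\mathcal{M}+R$, where $\mathcal{M}$ is a model operator on $\Rniip$ in which $\g_f$, $\b_f$, $L_{S^n}$, $1/r$, $1/r^2$ are frozen at $K$, $Ke_n$, the flat Laplacian $\Lap_\th$, and $1$, while the $\a$-dependent (convexifying) terms are retained, and $R$ is a second-order semiclassical operator with coefficients $O(\mu+\d+h/\e)$ plus lower-order terms carrying extra powers of $h$. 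For $\mathcal{M}$ I would prove the half-space estimate
\[
\frac{h}{\sqrt{\e}}\|u\|_{L^2(\Rniip)}\lesssim\|\mathcal{M}u\|_{H^{-1}(\Rniip)},\qquad u\in\Czinf(\Rniip),
\]
by taking the semiclassical Fourier transform in $\th$ and analysing, frequency by frequency, the resulting ordinary differential operator in $r$ on $[1,\infty)$ — a one-variable version of the DKSU convexity computation, the $h/\sqrt{\e}$ gain having the same origin as in Lemma \ref{DKSUCarl}. Applying this with $u=\psi w$, replacing $\mathcal{M}(\psi w)$ by $(\psi Pw-[P,\psi]w)-R(\psi w)$, and absorbing the remainders after choosing $\mu$, then $\d$, then $\e$, then $h$ successively small, gives $\frac{h}{\sqrt\e}\|\psi w\|_{L^2}\lesssim\|Pw\|_{H^{-1}(\Rniip)}$. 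Adding the two contributions proves the proposition.

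The main obstacle is this near-boundary estimate, in two respects. First, the one-dimensional model estimate must be established with the sharp $h/\sqrt{\e}$ factor on the half-line $[1,\infty)$, uniformly in the transverse frequency. Second, and more delicately, the perturbation $R$ contains a \emph{second-order} part of small coefficient — not a lower-order part — so a priori $\|R(\psi w)\|_{H^{-1}}$ is only controlled by $\|\psi w\|_{H^1}$; making it absorbable forces one either to keep the model estimate in a sharper form that also dominates $h\|u\|_{H^1}$, or to carry the true $\th$-dependent coefficients through the frequency-by-frequency analysis as a genuinely small perturbation rather than discarding them at the outset. Everything away from $\{r=1\}$ is routine once \eqref{flatCarl} and the semiclassical calculus are in hand.
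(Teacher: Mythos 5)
Your decomposition is genuinely different from the paper's: you split $w$ \emph{spatially} into a piece supported at distance $\geq \d$ from $\{r=1\}$ and a piece near $E$, whereas the paper splits $w$ \emph{in frequency} in the $\th$-variables into $w_s+w_\ell$ and never isolates a frozen-coefficient model at the level of the final $H^{-1}$ estimate. Your treatment of the far piece is essentially sound. The gap is in the near piece, where the entire difficulty of the proposition is concentrated and where you defer to a model estimate $\frac{h}{\sqrt{\e}}\|u\|_{L^2(\Rniip)}\lesssim\|\mathcal{M}u\|_{H^{-1}(\Rniip)}$ asserted to follow from ``a one-variable version of the DKSU convexity computation.'' It does not. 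The convexity computation produces the $H^1\to L^2$ estimate of Lemma \ref{DKSUCarl}; the passage to an $L^2\to H^{-1}(\Rniip)$ estimate is obstructed precisely by the boundary $\{r=1\}$, because $H^{-1}(\Rniip)$ is dual to $H^1_0(\Rniip)$ and so $\|\mathcal{M}u\|_{H^{-1}}$ only tests $u$ against $\mathcal{M}^{*}\phi$ with $\phi|_{r=1}=0$. After Fourier transform in $\th$, the resulting ODE in $r$ factors through the roots of $(1+K^2)X^2-2(1+iK\xi_n)X+1-|\xi|^2=0$, and for each $\xi$ the factor $h\partial_r+\overline{F}/r$ restricted to $\phi$ vanishing at $r=1$ misses a one-dimensional space per frequency; this is exactly the content of Lemma \ref{smallJHneg}, which shows $\|J_su\|_{H^{-1}_r}\simeq\|u-g\|_{L^2}$ with $g\neq 0$ in general, so the frequency-by-frequency estimate fails for a general right-hand side and can only be salvaged by showing the defect $g$ is small for the particular $u$ at hand. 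The paper's machinery --- the operators $J_s,J_\ell$ built from the two branches of $\sqrt{\tau_K}$ (the compatible branch changes across the branch locus, which forces the small/large frequency dichotomy), and the two separate arguments showing $\|g\|\leq\half\|u\|$, via ellipticity of $\Lphets$ on low frequencies and via the second factorization through $G_{\pm}$ on high frequencies --- exists entirely to control this defect. Your outline contains no substitute for it, and the defect is invisible in your writeup because you never confront the mismatch between the characteristic roots and the Dirichlet condition at $r=1$.

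The second problem you partially flag yourself but do not resolve: the perturbation $R$ has a genuinely second-order part with coefficients $O(\mu+\d)$, so $\|R(\psi w)\|_{H^{-1}}$ is controlled only by $(\mu+\d)\|\psi w\|_{H^1}$, and absorbing this would require $\|\psi w\|_{H^1}\lesssim\|\mathcal{M}(\psi w)\|_{H^{-1}}$ --- a two-derivative elliptic gain that the degenerate conjugated operator does not provide. Of the two escape routes you name, the second (``carry the true $\th$-dependent coefficients through the frequency-by-frequency analysis'') amounts to abandoning the model/perturbation split and redoing the paper's variable-coefficient argument. The paper sidesteps the issue structurally: all perturbative absorptions are performed at the $L^2$ level inside the estimates for $\|g\|$, where the auxiliary function $v=J^{-1}w$ satisfies $\|v\|_{H^2}\lesssim\|\Lphets v\|_{L^2}$ (small frequencies) or $z=(1+|\g_f|^2)(h\partial_r-\frac1rT_{G_-})v$ satisfies $\|z\|_{H^1}\lesssim\|\Lphets v\|_{L^2}$ (large frequencies), supplying exactly the extra derivatives needed to absorb the small second-order terms. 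That device --- conjugating the $H^1$--$L^2$ estimate by $J^{-1}$ so that the commutator and perturbation errors land on a function with two controlled derivatives --- is the missing idea in your proposal.
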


Note that the hypotheses imply that on $\s(\Omt_2)$, 
\[
|\b_f - (0, \ldots, 0, K)| \leq C_{\mu},
\]
\[
|\g_f - K| \leq C_{\mu},
\]
and if
\[
h^2 L_{S^n} = a_1 h^2\partial_{\th_1}^2 + \ldots + a_n h^2 \partial^2_{\th_n} + b_1 h^2\partial_{\th_1} + \ldots + b_n h^2\partial_{\th_n}
\]
then
\[
|a_j - 1 | \leq C_{\mu}
\]
for some constant $C_{\mu}$ which goes to zero if $\mu$ goes to zero.  $C_{\mu}$ may depend on $K$, but we are treating $K$ as fixed, so this will be ok.  

We may as well assume that $\b_f$, $\g_f$, and the coefficients of $L_{S^n}$ are extended to the rest of $\Rniip$ in such a way that these conditions continue to hold.

To prove this proposition, I want to divide $w$ into small and large frequency parts, and prove the estimate for each part separately.  Recall that $\Rniip = \{(r,\th)| \th \in \Rn, r \geq 1\}$.  Let $\S(\Rniip)$ be the restrictions to $\Rniip$ of Schwartz functions on $\RnI$.  Note that functions in $\Czinf(\s(\Omt_2))$ are in $\S(\Rniip)$.   

Let $r_1$ and $r_2$ be such that 
\[
\frac{|K|^2}{1 + |K|^2} < r_1 < r_2 \leq \half + \frac{|K|^2}{2(1 + |K|^2)} < 1,
\] 
and let $\d_1$ and $\d_2$ be such that $\d_2 > \d_1 > 0$.  Let $\rho \in \Czinf(\Rn)$ be a cutoff function such that $\rho(\xi) = 0$ if $|\xi|^2 > r_2$ or $|\xi_n| > \d_2$, and $\rho(\xi) = 1$ if $|\xi|^2 \leq r_1$ or $|\xi_n| \leq \d_1$.  

Let the hat $\hat{ }$ indicate the (semiclassical) Fourier transform in the $\th$ variables only.  (In general, Fourier transforms here will be in the $\th$ variables only unless otherwise indicated.)  For $w \in \Czinf(\s(\Omt))$, define $w_s$ and $w_{\ell}$ by $\hat{w}_s = \rho(\xi) \hat{w}$ and $\hat{w}_{\ell} = (1 - \rho(\xi))\hat{w}$, so $w = w_s + w_{\ell}$.  

\begin{lemma}\label{smallCarl}
There exists $\mu > 0$ and choices of $r_1, r_2,\d_1,$ and $\d_2$ such that if the hypotheses of Proposition \ref{simpleCarl} hold, then 
\[
\frac{h}{\sqrt{\e}}\|w_s\|_{L^2(\Rniip)} \lesssim \|\Lphets w_s \|_{H^{-1}(\Rniip)} + h \|w\|_{L^2(\s(\Omt))},
\]
for all $w \in \Czinf(\s(\Omt))$, where $w_s$ is defined as above.
\end{lemma}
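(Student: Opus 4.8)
The plan is to prove the estimate for $w_s$ directly from a symbolic positivity argument, treating $\Lphets$ as a semiclassical differential operator in the $r$ variable whose coefficients are Fourier multipliers in $\th$. First I would conjugate by the semiclassical Fourier transform in $\th$, so that $h\grad_\th$ becomes multiplication by $\xi$ and $h^2 L_{S^n}$ becomes (up to the small-coefficient errors controlled by $C_\mu$) multiplication by $-|\xi|^2$ plus lower-order terms. On the support of $\rho(\xi)$ we then have the two-sided bound $|\xi|^2 \le r_2 < 1$ and $|\xi_n| \le \d_2$, so after this conjugation $\Lphets$ acts on $\hat w_s$ essentially as a family, parametrized by $\xi$ in a compact set, of ordinary differential operators in $r$ of the form
\[
(1+|\g_f|^2) h^2 \partial_r^2 - \frac{2}{r}\bigl(\a + \b_f\cdot\xi\bigr) h\partial_r + \frac{1}{r^2}\bigl(\a^2 - |\xi|^2 + \text{errors}\bigr),
\]
with $\a = 1 + \frac{h}{\e}\log(rf(\th))$, so that $\a = 1 + O(h/\e)$ uniformly on the bounded set $\s(\Omt)$.

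The key point is the choice of $r_1, r_2, \d_1, \d_2$: the constraint $\frac{|K|^2}{1+|K|^2} < r_1$ is exactly what is needed so that, modulo the $O(h/\e)$ and $O(C_\mu)$ perturbations, the zeroth-order symbol $\a^2 - |\xi|^2$ and the structure inherited from the limiting Carleman weight give a strictly positive lower bound on the relevant quadratic form; the role of $r_2 \le \half + \frac{|K|^2}{2(1+|K|^2)}$ is to keep a strict gap $r_2 - r_1 > 0$, and the $\d_j$ conditions separately confine the $\xi_n$ direction where $\b_f \approx (0,\dots,0,K)$ makes the first-order-in-$r$ term potentially dangerous. So the main step is: show that for $w_s$, one has the $L^2$-type lower bound
\[
\|\Lphets w_s\|_{H^{-1}(\Rniip)}^2 \gtrsim \frac{h^2}{\e}\|w_s\|_{L^2(\Rniip)}^2 - (\text{absorbable errors}),
\]
by integrating $\langle \Lphets w_s, w_s\rangle$ (or testing against a suitable comparison function), integrating by parts in $r$, and reading off positivity of the principal and subprincipal symbols on $\mathrm{supp}\,\rho$. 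Because $w_s$ is not compactly supported in $r$ but only Schwartz, I would use that $w = w_s + w_\ell$ with $w \in \Czinf(\s(\Omt))$, write $w_s = w - w_\ell$, and control the non-compact tail of $w_s$ using the rapid decay of $\hat w$ together with the fact that $\Lphets$ has, on the tail region $r$ large, no spectrum near zero for frequencies with $|\xi|^2 < 1$ — this is precisely where the extra additive term $h\|w\|_{L^2(\s(\Omt))}$ on the right-hand side comes from (it absorbs the commutator between $\rho(h D_\th)$ and the $\th$-dependent coefficients $\g_f, \b_f, a_j$, whose symbols differ from constants by $C_\mu$, producing $O(h)$ errors acting on $w$).

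The main obstacle I expect is handling the first-order term $-\frac{2}{r}(\a + \b_f\cdot h\grad_\th)h\partial_r$ together with the $\th$-dependence of the top-order coefficient $1+|\g_f|^2$: after Fourier transform these are not skew-adjoint, so the integration-by-parts in $r$ produces cross terms that must be shown to be dominated by the positive zeroth-order contribution $\frac{1}{r^2}(\a^2 - |\xi|^2)$. This is exactly why the frequency cutoff must force $|\xi|^2$ bounded strictly below $1$ and $|\xi_n|$ small: smallness of $\mu$ then makes $C_\mu$ small enough that $|\b_f\cdot\xi| \le (|K|+C_\mu)\d_2$ plus the $|\g_f|^2$-variation are all beaten by the gap $1 - r_2 > 0$. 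Once the symbolic inequality is in place on $\mathrm{supp}\,\rho$, the passage from symbols to the operator estimate is a routine application of the semiclassical Gårding inequality (or direct integration by parts), with the $h\|w\|_{L^2}$ term collecting all the resulting $O(h)$ remainders; I would present the symbol computation in detail and then invoke Gårding, rather than grinding through every integration by parts.
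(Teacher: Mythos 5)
There is a genuine gap here: your proposal never engages with the mechanism that puts an $H^{-1}$ norm on the right-hand side. A positive-commutator or G\aa rding-type argument applied to the quadratic form $\langle \Lphets w_s, w_s\rangle$ can only be converted into an estimate involving $\|\Lphets w_s\|_{H^{-1}(\Rniip)}$ by paying $\|w_s\|_{H^1}$ in the duality pairing, and $\|w_s\|_{H^1}$ is \emph{not} comparable to $\|w_s\|_{L^2}$: the cutoff $\rho$ localizes only the $\th$-frequencies, so $h\partial_r w_s$ is completely uncontrolled by $\|w_s\|_{L^2}$. Chasing the constants, the coercive-form route yields at best $\frac{h^2}{\e}\|w_s\|_{H^1}\lesssim \|\Lphets w_s\|_{H^{-1}}$, which is weaker by a full factor of $\frac{h}{\sqrt{\e}}$ than the claimed bound; that factor is exactly the gain the lemma is supposed to deliver, so it cannot be discarded. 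You also do not address the boundary at $r=1$: $H^{-1}(\Rniip)$ is dual to $H^1_0$, and every integration by parts in $r$ on the half-space produces boundary terms that have to be accounted for.

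The paper's proof works quite differently, and the missing idea is the operator $J_s$ with Fourier symbol $\frac{F_s(\xi)}{r}+h\partial_r$, where $F_s$ is (an approximation to) a root of the quadratic $(1+K^2)X^2-2(1+iK\xi_n)X+1-|\xi|^2=0$, i.e.\ an approximate first-order right factor of $\Lphets$ on the support of $\rho$. One applies the known $H^1\to L^2$ estimate \eqref{flatCarl} to $\chi_2 J_s^{-1}w_s$, uses the boundedness of $J_s:H^1_r\to L^2$ and $J_s^{-1}:L^2\to H^1_r$ to recover $\|w_s\|_{L^2}$ on the left, and then uses Lemma \ref{smallJHneg} — which says $\|J_s u\|_{H^{-1}_r}\simeq\|u-g\|_{L^2}$ for an explicit kernel function $g$ coming from the $r=1$ boundary term — together with the ellipticity of $\Lphets$ at low $\th$-frequency to show $\|g\|_{L^2}\le\half\|u\|_{L^2}$ for $u=\Lphets J_s^{-1}w_s$. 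Finally the commutator estimate of Lemma \ref{smallJprops} moves $J_s$ past $\Lphets$. Your observations about the roles of $r_1,r_2,\d_1,\d_2$ and of $\mu$ are consistent with where they enter (they make $F_s$ nearly smooth and nearly a root, and make the coefficient perturbations $C_\mu$-small), but without the conjugation by $J_s$ there is no route from the $L^2$-based Carleman estimate to the $H^{-1}$-based one.
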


\begin{lemma}\label{largeCarl}
There exists $\mu > 0$ such that if the hypotheses of Proposition \ref{simpleCarl} hold, then 
\[
\frac{h}{\sqrt{\e}}\| w_{\ell}\|_{L^2(\Rniip)} \lesssim \|\Lphets w_{\ell} \|_{H^{-1}(\Rniip)}+ h \|w\|_{L^2(\s(\Omt))},
\]
for all $w \in \Czinf(\s(\Omt))$, where $w_{\ell}$ is defined as above.
\end{lemma}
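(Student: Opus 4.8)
The plan is to handle the large-frequency part $w_\ell$ by exploiting that on its frequency support we have $|\xi|^2 \ge r_1$ \emph{and} $|\xi_n| \ge \d_1$, which keeps us away from the characteristic set of the leading symbol. Write $\Lphets = \Lphet^0 + \text{(lower order)}$, where after conjugation and Fourier transform in $\th$ the leading part acts on $\hat w_\ell$ roughly as the symbol
\[
p(r,\xi,h\partial_r) = (1+|\g_f|^2)(h\partial_r)^2 - \tfrac{2}{r}\bigl(\a + i\b_f\cdot\xi\bigr)(h\partial_r) + \tfrac{1}{r^2}\bigl(\a^2 - \langle a\xi,\xi\rangle + i\,b\cdot h\xi\bigr),
\]
with $\a = 1+\tfrac{h}{\e}\log(rf)$ close to $1$, $\b_f$ close to $(0,\dots,0,K)$, and $a_j$ close to $1$. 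First I would observe that because $\a$ is close to $1$ and $|\xi|^2\ge r_1 > \tfrac{|K|^2}{1+|K|^2}$, the quantity $\a^2 - \langle a\xi,\xi\rangle$ is bounded away from zero (negative) on the support of $\hat w_\ell$ once $\mu$, $h/\e$, and $h$ are small — this is the role of the specific lower bound on $r_1$ and is essentially where the geometry of the limiting weight enters. Hence the zeroth-order term of $p$ in the $r$-variable has a definite sign, and one can build a first-order-in-$h\partial_r$ Carleman-type estimate for the ODE family $p(r,\xi,h\partial_r)$ by the usual integration-by-parts / commutator argument, uniformly in $\xi$ on the relevant frequency set.

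The key steps, in order: (1) conjugate and take the Fourier transform in $\th$ so that $\Lphets w_\ell$ becomes an $r$-ODE with parameter $\xi$ supported in $\{|\xi|^2\ge r_1,\ |\xi_n|\ge\d_1\}$; (2) split into the genuinely elliptic regime $|\xi|$ large, where $\tfrac{1}{r^2}\langle a\xi,\xi\rangle$ dominates and one gets $\|(h\partial_r)^j \hat w_\ell\| \lesssim \|\widehat{\Lphets w_\ell}\|$ directly from ellipticity of the symbol, and the bounded-but-non-characteristic regime $r_1 \le |\xi|^2 \le C$, $|\xi_n|\ge\d_1$, where one uses the sign of $\a^2 - \langle a\xi,\xi\rangle$; (3) in the second regime, multiply the ODE by an appropriate real weight and $\overline{h\partial_r\hat w_\ell}$, integrate in $r$ over $[1,\infty)$, and use that $\hat w_\ell$ is compactly supported in $r$ (since $w\in\Czinf(\s(\Omt))$) to discard boundary terms, extracting $\tfrac{h^2}{\e}\|\hat w_\ell\|^2 + h^2\|h\partial_r\hat w_\ell\|^2 \lesssim \|\widehat{\Lphets w_\ell}\|_{H^{-1}}\cdot(\text{norms of }\hat w_\ell)$; (4) absorb the contributions of the lower-order operator $E$ and the difference between $L_{S^n}$ and $\Lap_\th$ — these carry the small factor $C_\mu$ plus one lost power of $h$, hence are controlled by the left side for $\mu$ small plus the harmless $h\|w\|_{L^2(\s(\Omt))}$ error on the right; (5) integrate in $\xi$ (Plancherel) and undo the conjugation to recover the stated estimate for $w_\ell$.

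The main obstacle I anticipate is step (3)–(4): making the integration-by-parts argument genuinely uniform in $\xi$ across the whole frequency support of $\hat w_\ell$, including the transition between "large $|\xi|$'' and "moderate $|\xi|$, $|\xi_n|$ bounded below'', while simultaneously keeping track that the $H^{-1}$ (rather than $L^2$) norm on the right still suffices — one needs the extra $h\partial_r$ derivative gained from ellipticity in the $r$-direction to pair against the $H^{-1}$ loss. A secondary subtlety is choosing $\mu$ (and implicitly tightening $r_1,r_2,\d_1,\d_2$) small enough that \emph{both} the perturbation terms and the deviation of $\a,\b_f,\g_f,a_j$ from their constant model values are absorbed; since $K$ is fixed this is only a matter of taking $\mu$ small depending on $K$, exactly as the remark preceding the lemma sets up. Once the constant-coefficient model estimate is in hand, the perturbation is routine.
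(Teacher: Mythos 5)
There is a genuine gap, and it sits exactly where the paper says the small-frequency argument breaks down. Your step (2) asserts that on the support of $\hat w_{\ell}$ the zeroth-order coefficient $\a^2 - \langle a\xi,\xi\rangle \approx 1 - |\xi|^2$ is bounded away from zero; this is false. The cutoff is arranged so that $r_1 < r_2 \leq \half + \frac{|K|^2}{2(1+|K|^2)} < 1$, so the large-frequency support contains the entire sphere $|\xi|^2 = 1$, where $1-|\xi|^2$ vanishes. Worse, at $|\xi|^2=1$ the model symbol $(1+K^2)X^2 - 2(1+iK\xi_n)X + (1-|\xi|^2)$ has the root $X=0$: the conjugated operator has a real characteristic in the $r$-direction there (for every $\xi_n$, so restricting to $|\xi_n|\geq \d_1$ does not help -- that restriction only controls the branch cut of the square root, not ellipticity). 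Consequently there is no ``extra $h\partial_r$ derivative gained from ellipticity in the $r$-direction'' to pay for the $H^{-1}$ norm, and no sign condition to exploit. Indeed, if your scheme worked it would produce a Carleman estimate with no $\sqrt{\e}$ loss and no use of the convexified weight, which is impossible for this non-elliptic operator; any correct argument must re-enter through the sub-elliptic gain already encoded in \eqref{flatCarl}.

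The second, related omission is that you never specify the mechanism that converts an $L^2$ bound on $\Lphets w_{\ell}$ into the stated $H^{-1}$ bound. The paper's route is structurally different: it conjugates by the operator $J_{\ell}$ built from the root $F_{\ell}(\xi)$ of the model symbol with $\mathrm{Re}\,F_{\ell}$ bounded below, applies the known estimate \eqref{flatCarl} to $\chi_2 J_{\ell}^{-1}w_{\ell}$ to get \eqref{largefirstpart}, and then uses Lemma \ref{largeJHneg}, which says $\|J_{\ell}u\|_{H^{-1}_r} \simeq \|u-g\|_{L^2}$ for an explicit cokernel term $g$. The entire content of the large-frequency case is showing $\|g\|\leq \half\|u\| + O(h)\|w_{\ell}\|$ for $u = \Lphets J_{\ell}^{-1}w_{\ell}$, and since ellipticity is unavailable this is done by factoring $\Lphets = (h\partial_r - \frac1r T_{G_+})(1+|\g_f|^2)(h\partial_r - \frac1r T_{G_-}) + hE_1$ and observing that the symbol $\overline{F_{\ell}} - G_+$ is $O(\d + C_{\mu})$, so that $u$ lies (up to small errors) in the range of the adjoint-type factor $J_{\ell}^{*}$ and the cokernel projection $g$ is correspondingly small. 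Nothing in your plan plays the role of $J_{\ell}$, of Lemma \ref{largeJHneg}, or of this factorization, so the $H^{-1}$ norm on the right-hand side is never actually produced.
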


Taken together, these two lemmas imply Proposition \ref{simpleCarl}.  To see why, first we need a lemma.

\begin{lemma}
If $A$ is a pseudodifferential operator of order $m \geq 0$ on $\R^n$, it can be applied to Schwartz functions on $\R^{n+1}$, by taking $Af(x_1, \ldots, x_{n+1})$ for each fixed $x_{n+1}$.  Then as an operator defined for functions on $\R^{n+1}$ it extends to an operator from $H^{k+m}(\RnI)$ to $H^k(\RnI)$, 
\[
\| Af \|_{H^k(\R^{n+1})} \lesssim \| f \|_{H^{k+m}(\R^{n+1})}
\]
\end{lemma}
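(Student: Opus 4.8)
The plan is to strip off the $x_{n+1}$ variable by a partial Fourier transform, so that the claim becomes a \emph{parameter-dependent} mapping estimate for $A$ on $\Rn$, and then to appeal to the pseudodifferential calculus with a large parameter. Write $x=(x',x_{n+1})$ with $x'\in\Rn$, let $\tilde A$ be the operator on $\RnI$ that applies $A$ in the $x'$ variables for each fixed $x_{n+1}$, and (by density of $\mathcal{S}(\RnI)$ in the Sobolev spaces) argue for $f\in\mathcal{S}(\RnI)$. The structural observation is that, because $A$ acts only in $x'$, the operator $\tilde A$ commutes with the (semiclassical) Fourier transform $\mathcal{F}_{n+1}$ in $x_{n+1}$: for each fixed $\xi_{n+1}$,
\[
\mathcal{F}_{n+1}\bigl(\tilde A f\bigr)(\,\cdot\,,\xi_{n+1}) \;=\; A\bigl(\mathcal{F}_{n+1}f(\,\cdot\,,\xi_{n+1})\bigr).
\]

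First I would rewrite the Sobolev norms to match this splitting. Put $\lambda:=\langle\xi_{n+1}\rangle\ge 1$, so that $\langle(\xi',\xi_{n+1})\rangle^2=\lambda^2+|\xi'|^2$; then Plancherel in $x_{n+1}$ gives, for every real $s$,
\[
\|g\|_{H^s(\RnI)}^2 \;=\; \int_{\R}\bigl\|\mathcal{F}_{n+1}g(\,\cdot\,,\xi_{n+1})\bigr\|_{H^s_\lambda(\Rn)}^2\,d\xi_{n+1},
\]
where $H^s_\lambda(\Rn)$ has norm $\|v\|_{H^s_\lambda}^2=\int(\lambda^2+|\xi'|^2)^s|\hat v(\xi')|^2\,d\xi'$ and $\lambda\ge 1$ is viewed as a large parameter. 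Combined with the commutation identity, this reduces the lemma to the single \emph{uniform} estimate
\[
\|Av\|_{H^k_\lambda(\Rn)}\;\lesssim\;\|v\|_{H^{k+m}_\lambda(\Rn)},\qquad \lambda\ge 1,
\]
with implied constant independent of $\lambda$ (and, since every object here is semiclassical, of $h$).

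Next I would establish this uniform estimate. Let $\Lambda_\lambda$ be the Fourier multiplier with symbol $\mu_\lambda(\xi'):=(\lambda^2+|\xi'|^2)^{1/2}$; the estimate is equivalent to the bound $\|\Lambda_\lambda^{k}A\,\Lambda_\lambda^{-k-m}\|_{L^2(\Rn)\to L^2(\Rn)}\lesssim 1$, uniformly in $\lambda\ge 1$. Using $\lambda\ge 1$ one checks that each $\mu_\lambda^{s}$ obeys the symbol estimates of order $s$ uniformly in $\lambda$, for every real $s$; and using the hypothesis $m\ge 0$ one checks that the classical order-$m$ symbol $a$ satisfies $|\partial_{x'}^{\alpha}\partial_{\xi'}^{\beta}a(x',\xi')|\lesssim\langle\xi'\rangle^{m}\le(\lambda^2+|\xi'|^2)^{m/2}$, i.e.\ it sits in the (non-gaining) parameter class of order $m$. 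The leading symbol of $\Lambda_\lambda^{k}A\,\Lambda_\lambda^{-k-m}$ is then $a(x',\xi')\,(\lambda^2+|\xi'|^2)^{-m/2}$, which together with all of its derivatives is bounded uniformly in $\lambda$ (here $m\ge 0$ is used to absorb $|a|$), while the lower-order terms produced by the composition formula each carry an extra factor of $(\lambda^2+|\xi'|^2)^{-1/2}\le 1$ and are likewise uniformly bounded. By the Calder\'on--Vaillancourt theorem the operator is therefore bounded on $L^2(\Rn)$ uniformly in $\lambda$, which is exactly the estimate needed. Feeding this back through the two previous displays finishes the proof, the extension from $\mathcal{S}(\RnI)$ to $H^{k+m}(\RnI)$ being automatic.

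The step I expect to be the main obstacle is this last uniform $L^2$ bound: a classical order-$m$ operator does \emph{not} lie in the gaining parameter calculus, so one cannot merely quote a composition theorem to see that $\Lambda_\lambda^{k}A\,\Lambda_\lambda^{-k-m}$ has order $0$; the hypothesis $m\ge0$ has to be exploited to place $a$ in a weaker symbol class, and one must check by hand that the resulting composition has bounded symbol with uniformly bounded derivatives. Everything else is bookkeeping. (If one prefers to avoid the parameter calculus, the case of nonnegative integer $k$ can be done directly from $[\partial_{x_{n+1}},A]=0$, the fact that $[\partial_{x_j},A]$ is again of order $m$, the elementary inequality $\int_{\R}\|g(\,\cdot\,,x_{n+1})\|_{H^s(\Rn)}^2\,dx_{n+1}\le\|g\|_{H^s(\RnI)}^2$, and the boundedness $A\colon H^m(\Rn)\to L^2(\Rn)$; real $k\ge0$ then follows by interpolation, and $k<0$ by duality, the transpose of $A$ being again of order $m$ and acting only in $x'$.)
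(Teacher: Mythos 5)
Your argument is correct, but it is not the route the paper takes. The paper proves the lemma only for $k$ a non-negative integer, by the direct computation you relegate to your final parenthesis: expand $\|Af\|_{H^k(\RnI)}^2$ into mixed derivatives $h^{|\a|+j}\partial_{x}^{\a}\partial_{y}^{j}$, use $[\partial_y, A]=0$, apply the slicewise boundedness $A: H^{|\a|+m}(\Rn)\to H^{|\a|}(\Rn)$ for each fixed $y$, and then invoke the inequality $\int_{\R}\|g(\cdot,y)\|_{H^{s}(\Rn)}^2\,dy\le\|g\|_{H^{s}(\RnI)}^2$, valid because $s=|\a|+m\ge 0$ --- this is exactly where the hypothesis $m\ge 0$ enters for the paper, just as it does (in a different guise) for you. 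Negative $k$ is then handled in the paper only where needed, by ad hoc duality (e.g.\ $E_1^{*}:H^1_0\to L^2$ implies $E_1:L^2\to H^{-1}$), together with the remark that the bounds must be uniform in the parameter. Your main route --- conjugating by the partial Fourier transform in $x_{n+1}$, recasting the claim as a single $L^2$ bound for $\Lambda_\lambda^{k}A\Lambda_\lambda^{-k-m}$ uniform in $\lambda=\langle\xi_{n+1}\rangle\ge1$, and verifying that the composed symbol and all its derivatives are bounded uniformly in $\lambda$ (using $m\ge0$ to place $a$ in the non-gaining weight class) before applying Calder\'on--Vaillancourt --- is heavier machinery but buys a genuinely stronger conclusion: it treats all real $k$ at once, including the negative and fractional orders, without a separate duality or interpolation step, and it makes transparent exactly why $m\ge0$ is needed (for $m<0$ one cannot dominate $\langle\xi'\rangle^{m}$ by $(\lambda^2+|\xi'|^2)^{m/2}$, and indeed the fibered estimate fails to improve to a gain in the full $(n+1)$-dimensional scale). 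You correctly flag the one genuine danger point, namely that $a$ does not lie in the gaining parameter class so the composition must be checked by hand in the mixed class; with that check done as you describe, the proof is sound.
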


\begin{proof}
I will give a proof for the case where $k$ is a non-negative integer.  Let $(x,y); x \in \Rn, y \in \R$ denote the coordinates on $\RnI$.  Then if $f$ is Schwartz,
\[
\| Af \|^2_{H^k(\R^{n+1})} = \sum_{0 \leq |\a| + j \leq k} \|h^{|\a| + j}\partial^{\a}_x \partial^j_y Af \|^2_{L^2(\R^{n+1})}
\]
Now
\[
\|h^{|\a| + j} \partial^{\a}_x \partial^j_y Af \|^2_{L^2(\R^{n+1})} = \int_{-\infty}^{\infty} \int_{\Rn} |h^{|\a| + j} \partial^{\a}_x \partial^j_y Af |^2 dx \, dy.
\]
Here $A$ and $\partial^j_y$ commute, so 
\begin{eqnarray*}
\|h^{|\a| + j} \partial^{\a}_x \partial^j_y Af \|^2_{L^2(\R^{n+1})} &=& \int_{-\infty}^{\infty} \int_{\Rn} |h^{|\a|}\partial^{\a}_x A (h^{j}\partial^j_y f) |^2 dx \, dy\\
                                                       &\lesssim& \int_{-\infty}^{\infty} \|A (h^{j}\partial^j_y f) \|_{H^{|\a|}(\Rn)}^2  dy. \\
\end{eqnarray*}
Therefore, by the boundedness of $A$, 
\[
\|h^{|\a| + j} \partial^{\a}_x \partial^j_y Af \|^2_{L^2(\R^{n+1})} \lesssim \int_{-\infty}^{\infty} \|(h^{j}\partial^j_y f) \|_{H^{|\a|+m}(\Rn)}^2  dy.
\]
Since $|\a| + m \geq 0$, 
\[
\|h^{|\a| + j} \partial^{\a}_x \partial^j_y Af \|^2_{L^2(\R^{n+1})} \lesssim \| (h^{j}\partial^j_y f) \|_{H^{|\a| + m}(\RnI)}^2.
\]
Therefore
\begin{eqnarray*}
\|h^{|\a| + j} \partial^{\a}_x \partial^j_y Af \|^2_{L^2(\R^{n+1})} &\lesssim& \| f \|_{H^{|\a|+ j + m}(\RnI)}^2 \\
                                                       &\lesssim& \| f \|_{H^{k + m}(\RnI)}^2, \\
\end{eqnarray*}
so
\[
\| Af \|^2_{H^k(\R^{n+1})} \lesssim \| f \|_{H^{k + m}(\RnI)}^2.
\]

\end{proof}

Note that this still holds if $A$ depends on $y$, as long as $A$ has bounds uniform in $y$.  


\begin{proof}[Proof of Proposition \ref{simpleCarl}]
Adding together the estimates from Lemma \ref{smallCarl} and Lemma \ref{largeCarl} gives
\begin{eqnarray*}
& &        \frac{h}{\sqrt{\e}}(\| w_s\|_{L^2(\Rniip)}+\| w_{\ell}\|_{L^2(\Rniip)}) \\
&\lesssim& \|\Lphets w_s \|_{H^{-1}(\Rniip)}+\|\Lphets w_{\ell} \|_{H^{-1}(\Rniip)} +h\|w\|_{L^2(\s(\Omt))} \\
\end{eqnarray*}
Since $ w_s + w_{\ell} = w$,
\[
\frac{h}{\sqrt{\e}}\| w\|_{L^2(\s(\Omt))} \lesssim \|\Lphets w_s \|_{H^{-1}(\Rniip)} + \|\Lphets w_{\ell} \|_{H^{-1}(\Rniip)}+ h \|w\|_{L^2(\s(\Omt))}.
\]
For small enough $\e$, we can absorb the last term into the left side to give

\[
\frac{h}{\sqrt{\e}}\| w\|_{L^2(\s(\Omt))} \lesssim \|\Lphets w_s \|_{H^{-1}(\Rniip)} + \|\Lphets w_{\ell} \|_{H^{-1}(\Rniip)}.
\]

Since $(1+|\g_f|^2) > 1 + K^2 - C_{\mu}$, for $\mu$ small enough, we have 
\[
\frac{h}{\sqrt{\e}}\| w\|_{L^2(\s(\Omt))} \lesssim \|(1+|\g_f|^2)^{-1}\Lphets w_s \|_{H^{-1}(\Rniip)} + \|(1+|\g_f|^2)^{-1} \Lphets w_{\ell} \|_{H^{-1}(\Rniip)}.
\]

Now $w_s = Pw$, where $P$ is the semiclassical pseudodifferential operator of order 0 on $\Rn$ with symbol $\rho(\xi)$.  $P$ commutes with $\partial_r$.  Therefore there are some operators $E_1$ and $E_0$ which for each fixed $r \in [1, \infty)$ are semiclassical pseudodifferential operators of order $1$ and $0$ respectively, on $\Rn$ such that 
\begin{eqnarray*}
& &        \|(1+|\g_f|^2)^{-1} \Lphets w_s \|_{H^{-1}(\Rniip)} = \|(1+|\g_f|^2)^{-1} \Lphets P w \|_{H^{-1}(\Rniip)} \\
&\lesssim& \| P (1+|\g_f|^2)^{-1}  \Lphets w \|_{H^{-1}(\Rniip)} + h\|E_0 h\partial_r + E_1 w\|_{H^{-1}(\Rniip)} \\
\end{eqnarray*}
There is no $hE_{-1}h^2\partial_r^2$ in the error term because the coefficient of $h^2\partial_r^2$ in $(1+|\g_f|^2)^{-1} \Lphets$ is just $1$.  

By the lemma above, $E_1^{*}$ is bounded from $H^1_0(\Rniip)$ to $L^2(\Rniip)$, so by duality, $E_1$ is bounded from $L^2(\Rniip)$ to $H^{-1}(\Rniip)$.  

In addition, $E_0^{*}$ is bounded from $H^1(\Rniip)$ to $H^1(\Rniip)$.  Moreover, $E_0^{*}$ takes functions with trace $0$ on the boundary of $\Rniip$ to other functions with trace $0$ on the boundary of $\Rniip$, so by duality, $E_0$ is bounded from $H^{-1}(\Rniip)$ to $H^{-1}(\Rniip)$.
These bounds must be uniform in $r$ for the range of $r$ allowed on the support of $w$. Therefore
\[
\|(1+|\g_f|^2)^{-1} \Lphets w_s \|_{H^{-1}(\Rniip)} \lesssim \|P (1+|\g_f|^2)^{-1}  \Lphets w \|_{H^{-1}(\Rniip)} + h\|w\|_{L^2(\Rniip)} 
\]
Now by the same lemma, $P$ is bounded from $H^1(\Rniip)$ to $H^1(\Rniip)$.  Moreover, if $u$ has trace zero on the boundary of $\Rniip$, then so does $Pu$, so $P$ is bounded from $H^1_0(\Rniip)$ to $H^1_0(\Rniip)$.  Since $\rho$ is real valued, $P$ is also self adjoint, so by duality $P$ is bounded from $H^{-1}(\Rniip)$ to $H^{-1}(\Rniip)$.  Therefore
\[
\|(1+|\g_f|^2)^{-1} \Lphets w_s \|_{H^{-1}(\Rniip)} \lesssim \|(1+|\g_f|^2)^{-1}  \Lphets w \|_{H^{-1}(\Rniip)} + h\|w\|_{L^2(\Rniip)}.
\]
and thus
\[
\|\Lphets w_s \|_{H^{-1}(\Rniip)} \lesssim \| \Lphets w \|_{H^{-1}(\Rniip)} + h\|w\|_{L^2(\Rniip)}.
\]
Similarly,
\[
\|\Lphets w_{\ell} \|_{H^{-1}(\Rniip)} \lesssim \|\Lphets w \|_{H^{-1}(\Rniip)} + h \|w\|_{L^2(\Rniip)}.
\]
Therefore
\[
\frac{h}{\sqrt{\e}}\| w\|_{L^2(\s(\Omt))} \lesssim \|\Lphets w \|_{H^{-1}(\Rniip)} + h\|w\|_{L^2(\Rniip)}.
\]
Again the last term can be absorbed into the left side for small enough $\e$, so

\[
\frac{h}{\sqrt{\e}}\| w\|_{L^2(\s(\Omt))} \lesssim \|\Lphets w \|_{H^{-1}(\Rniip)}
\]
for each $w \in \Czinf(\s(\Omt))$ as desired.  

\end{proof}

Therefore we need only deal with the proofs of Lemmas \ref{smallCarl} and \ref{largeCarl}.

\section{Small Frequency Operators}

I want to begin by describing some operators for use in proving the small frequency case.

Consider the function $F:\R^n  \rightarrow \mathbb{C}$ given by
\[
\overline{F(\xi)} = \frac{1}{1 + |K|^2}\left( 1 + iK\xi_n + \sqrt{2iK\xi_n-(K\xi_n)^2 +(1 + |K|^2)|\xi|^2 -|K|^2} \right).
\]
where the square root is taken to mean the branch of the square root function with nonnegative imaginary part.  If $r_2$ and $\d_2$ are chosen small enough, then this is nearly continuous on the support of $\rho$.  To be more precise, $F$ is smooth except where 
\[
\tau_K(\xi) = 2iK\xi_n-(K\xi_n)^2 +(1 + |K|^2)|\xi|^2 -|K|^2
\]
lies on the nonnegative real axis, where this branch of the square root has its branch cut.  This occurs when $\xi_n = 0$ and $|\xi|^2 \geq \frac{|K|^2}{1 + |K|^2}$, and gives a jump discontinuity of size $2\sqrt{(1 + |K|^2)|\xi|^2 -|K|^2}.$  However, on the support of $\rho$, $|\xi|^2 \leq r_2$, so for $r_2$ close to $\frac{|K|^2}{1 + |K|^2}$ the maximum possible size of the jump discontinuity is small.  

Therefore for any $\d>0$, we can pick a smooth function $F_s(\xi)$ such that 
\[
\left| F_s(\xi) - F(\xi) \right| \leq \d
\]
on the support of $\rho$, by choosing $r_2$ small enough.  Note that the derivatives of $F_s$ may depend on $r_1$, $r_2$, $\d_1$, and $\d_2$.  Since the choice of these in turn depends on $\d$, the derivatives of $F_s$ are bounded by a quantity that depends on $\d$.  

Now consider the necessary bounds on $F_s$.  On the support of $\rho$, the imaginary part of $\tau_K$ must lie in the interval $[-2K\d_2, 2K\d_2]$.  The real part of $\tau_{K}$ is given by 
\[
- (K\xi_n)^2 - |K|^2 + (1 + |K|^2)|\xi|^2.
\]
We have that $|\xi|^2 \leq r_2$ on the support of $\rho$.  We can choose $r_2$ so close to $\frac{K^2}{1 + K^2}$ that 
\[
(1 + K^2)r_2 - K^2 \leq \d_2.
\]
Then the real part of $\tau_K$ is bounded above by $\d_2$ on the support of $\rho$.  Therefore on the support of $\rho$, $(\mathrm{Re} (\tau_K),\mathrm{Im} (\tau_K)) \in (-\infty, \d_2]\times [-2K\d_2, 2K\d_2]$, and so by taking $\d_2$ small enough, we can ensure that the real part of $\sqrt{\tau_K}$ has absolute value less than $\frac{1}{3}$ on the support of $\rho$.

Therefore if $\d$ is small enough, $\mathrm{Re}(F_s), |F_s| > \frac{1}{2+2K^2}$ on the support of $\rho$.  We can define $F_s$ outside the support of $\rho$ so that $\mathrm{Re}(F_s), |F_s| > \half \frac{1}{1+|K|^2}$ for all $\xi$, and $\mathrm{Re}(F_s), |F_s| \simeq 1 + |\xi|$ for large $|\xi|$.  

Now for $u \in \S(\Rniip)$, define $J_{s}u$ by 
\[
\widehat{J_{s} u}(r, \xi) = \left( \frac{F_s(\xi)}{r} + h\partial_r \right) \hat{u}(r, \xi).
\]
This operator has adjoint $J_{s}^{*}$ given by
\[
\widehat{J_{s}^{*} u}(r, \xi) = \left( \frac{\overline{F_s(\xi)}}{r} - h\partial_r \right) \hat{u}(r, \xi).
\]
These operators have (right) inverses defined by 

\[
\widehat{J_{s}^{-1}u} (r,\xi) = h^{-1} \int_1^{r} \hat{u}(t,\xi) \left( \frac{t}{r} \right)^{\frac{F_s(\xi)}{h}} dt 
\]
and
\[
\widehat{J_{s}^{*-1}u} (r,\xi) = h^{-1} \int_r^{\infty} \hat{u}(t,\xi) \left( \frac{r}{t} \right)^{\frac{\overline{F_s(\xi)}}{h}} dt.
\]
Each of these is well defined for functions in $\S(\Rniip)$.

Define the weighted Sobolev space $H^1_r(\Rniip)$ by the norm 
\[
\| u \|^2_{H^1_r(\Rniip)} = \left\| \frac{u}{r} \right\|^2_{L^2(\Rniip)} + \|h\partial_r u\|^2_{L^2(\Rniip)} + \left\| \frac{h}{r}\grad_{\th} u \right\|^2_{L^2(\Rniip)}.
\]

Note that since $\s(\Omt_2)$ lies in the set $1 \leq r \leq C_{\s(\Omt_2)}$ for some $C_{\s(\Omt_2)}$, $H^1$ and $H^1_r$ norms are comparable for functions supported on $\s(\Omt_2)$, with constants of comparability depending only on $C_{\s(\Omt_2)}$.  This holds more generally for any functions supported in $1 \leq r \leq C_{\s(\Omt_2)}$.

\begin{lemma}\label{smallJbounds}
$J_s, J_s^{*},J_s^{-1},$ and $J_s^{*-1}$ extend as bounded maps
\[
J_{s}, J_s^{*}: H^1_r(\Rniip) \rightarrow L^2(\Rniip)
\]
and
\[
J_{s}^{-1}, J_s^{*-1}: L^2(\Rniip) \rightarrow H^1_r(\Rniip).
\]
Moreover, the extensions of $J_{s}^{*}$ and $J^{*-1}_{s}$ are isomorphisms.  
\end{lemma}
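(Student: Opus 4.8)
The plan is to reduce everything, via the semiclassical Fourier transform in the $\th$ variables, to a family of one-dimensional integral operators in $r$ that is uniformly bounded in the frequency $\xi$, and then apply Schur's test. Write $\hat u(r,\xi)$ for the Fourier transform of $u$ in $\th$, so that by Plancherel the square of each of the three terms in $\|u\|_{H^1_r(\Rniip)}^2$ is the $L^2(dr\,d\xi)$ norm-squared of $\frac1r\hat u$, of $h\partial_r\hat u$, and of $\frac{|\xi|}{r}\hat u$, respectively. With this, the boundedness of $J_s$ and $J_s^*$ on $H^1_r$ is immediate: by construction $|F_s(\xi)|\lesssim 1+|\xi|$, so the Fourier multiplier with symbol $F_s(\xi)$ is bounded from $H^1$ to $L^2$, and dividing by $r$ and adding the $h\partial_r$ term --- which is exactly the middle term of the $H^1_r$ norm --- gives $\|J_s u\|_{L^2(\Rniip)}\lesssim\|u\|_{H^1_r(\Rniip)}$; likewise for $J_s^*$ with $\overline{F_s(\xi)}$.

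The heart of the matter is the boundedness of $J_s^{-1}$ and $J_s^{*-1}$ from $L^2(\Rniip)$ to $H^1_r(\Rniip)$. After Plancherel, $\widehat{J_s^{-1}u}(\cdot,\xi)=T_\xi[\hat u(\cdot,\xi)]$ with $T_\xi g(r)=h^{-1}\int_1^r g(t)(t/r)^{F_s(\xi)/h}\,dt$, and $\widehat{J_s^{*-1}u}(\cdot,\xi)=T_\xi^*[\hat u(\cdot,\xi)]$ with $T_\xi^*$ having kernel $h^{-1}(r/t)^{\overline{F_s(\xi)}/h}$ supported in $\{1\le r\le t\}$. The crucial input from Section 3 is that $\mathrm{Re}\,F_s(\xi)\gtrsim 1+|\xi|$; hence $a(\xi):=\mathrm{Re}\,F_s(\xi)/h\gtrsim(1+|\xi|)/h$ is large and positive, so $|T_\xi g|$ and $|T_\xi^* g|$ are dominated by the positive integral operators with kernels $h^{-1}(t/r)^{a(\xi)}$ on $\{t\le r\}$ and $h^{-1}(r/t)^{a(\xi)}$ on $\{r\le t\}$. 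A direct Schur's test, carried out with the weight $1/r$ and then with $|\xi|/r$ inserted on the output side, yields
\[
\left\|\tfrac1r T_\xi\right\|_{L^2(dr)\to L^2(dr)},\quad\left\|\tfrac{|\xi|}{r}T_\xi\right\|_{L^2(dr)\to L^2(dr)}\ \lesssim\ \frac{h^{-1}(1+|\xi|)}{a(\xi)}\ \lesssim\ 1,
\]
uniformly in $\xi$ (the bound for $T_\xi^*$ is the same, and uses $a(\xi)>1$, valid for $h$ small). This controls the first and third terms of $\|J_s^{-1}u\|_{H^1_r}$ by $\|u\|_{L^2}$; for the middle term, differentiating the explicit formula shows $J_sJ_s^{-1}=I$, whence $h\partial_r J_s^{-1}u=u-\tfrac1r P_s J_s^{-1}u$ with $P_s$ the Fourier multiplier of symbol $F_s(\xi)$, and the last term is again controlled by the first- and third-term estimates since $|F_s(\xi)|\lesssim1+|\xi|$. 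The same argument handles $J_s^{*-1}$, via $h\partial_r J_s^{*-1}u=\tfrac1r\overline{P_s}J_s^{*-1}u-u$.

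Finally, $J_s^*$ is bounded $H^1_r\to L^2$, $J_s^{*-1}$ is bounded $L^2\to H^1_r$, and $J_s^*J_s^{*-1}=I$ by construction, so for the isomorphism claims it remains only to check $J_s^{*-1}J_s^*=I$; on $\S(\Rniip)$ this follows from one integration by parts in $r$ in the integral defining $J_s^{*-1}$ (the boundary contribution at $r=t$ equals $\hat u$, since the kernel is $1$ there, while the contribution at infinity vanishes because $\mathrm{Re}\,\overline{F_s(\xi)}>0$ forces decay), and one then extends by density using the two boundedness statements. I expect the Schur's test step --- in particular checking that the bound is genuinely uniform in $\xi$, which is precisely what the lower bound $\mathrm{Re}\,F_s(\xi)\gtrsim1+|\xi|$ supplies --- to be the only real obstacle; the rest is bookkeeping with Plancherel and the fundamental theorem of calculus. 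One should not, incidentally, expect $J_s^{-1}J_s=I$: it differs from the identity by a term determined by the trace at $r=1$, consistent with only $J_s^*$ and $J_s^{*-1}$ being claimed to be isomorphisms.
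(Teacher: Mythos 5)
Your proposal is correct and follows essentially the same route as the paper: Plancherel reduction to one-dimensional operators in $r$, the bounds $|F_s(\xi)|\lesssim 1+|\xi|$ and $\mathrm{Re}\,F_s(\xi)\gtrsim 1+|\xi|$ to control the multipliers and the integral kernels, recovery of the $h\partial_r$ term from the defining identity, and the two inverse identities $J_s^{*}J_s^{*-1}=I$ and $J_s^{*-1}J_s^{*}=I$ (the latter by integration by parts) for the isomorphism claim. The only cosmetic difference is that you bound the kernel operators by Schur's test where the paper uses a change of variables plus Minkowski's integral inequality; the two computations are interchangeable here.
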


\begin{proof}

Consider $J_{s}$ first.  If $u \in \S(\Rniip)$, then 
\begin{eqnarray*}
\|J_{s}u \|^2_{L^2(\Rniip)} &=& h^{-n} \| \widehat{J_{s}u} \|^2_{L^2(\Rniip)} \\
                            &=& h^{-n} \left\| \frac{F_s(\xi)}{r} \hat{u} + h\partial_r \hat{u} \right\|^2_{L^2(\Rniip)} \\
                            &\leq& h^{-n} \left\| \frac{F_s(\xi)}{r} \hat{u} \right\|^2_{L^2(\Rniip)} + h^{-n}\|h\partial_r \hat{u} \|^2_{L^2(\Rniip)} \\
                            &\lesssim& h^{-n} \left\| \frac{1 + |\xi|}{r} \hat{u} \right\|^2_{L^2(\Rniip)} + \|h\partial_r u \|^2_{L^2(\Rniip)} \\
                            &\lesssim& \left\| \frac{u}{r} \right\|^2_{L^2(\Rniip)} + \left\| \frac{h}{r}\grad_{\th}u \right\|^2_{L^2(\Rniip)} + \|h\partial_r u \|^2_{L^2(\Rniip)} \\
                            &=& \|u\|_{H^1_r(\Rniip)} \\
\end{eqnarray*}

By the density of $\S(\Rniip)$ in $H^1_r(\Rniip)$, $J_{s}$ extends to a bounded map $J_{s}: H^1_r(\Rniip) \rightarrow L^2(\Rniip)$.  
The proof for $J_s^{*}$ is similar.

Now consider $J^{-1}_{s}$.  If $u \in \S(\Rniip)$, then
\begin{eqnarray*}
\int_1^{\infty} \left| \frac{1}{r}\widehat{J^{-1}_{s}u} \right|^2 dr &=& \int_1^{\infty} \left| h^{-1} \int_1^r \hat{u}(t,\xi) \left( \frac{t}{r} \right)^{\frac{F_s(\xi)}{h}}dt \right|^2 r^{-2} dr \\
&\leq& \int_1^{\infty} \left| h^{-1} \int_0^r \hat{u}(t,\xi) \left( \frac{t}{r} \right)^{\frac{F_s(\xi)}{h}}dt \right|^2 r^{-2} dr. \\
\end{eqnarray*}
By a change of variables, we get
\[
\int_1^{\infty} \left| \frac{1}{r}\widehat{J^{-1}_{s}u} \right|^2 dr = \int_1^{\infty} \left| h^{-1} \int_0^1 \hat{u}(rt, \xi) t^{\frac{F_s(\xi)}{h}}dt \right|^2 dr.
\]
Then Minkowski's inequality gives us 
\begin{eqnarray*}
\int_1^{\infty} \left| \frac{1}{r}\widehat{J^{-1}_{s}u} \right|^2 dr &\leq& h^{-2} \left( \int_0^1 \left( \int_1^{\infty} |\hat{u}(rt, \xi) t^{\frac{F_s(\xi)}{h}}|^2 dr \right)^{\half} dt \right)^2 \\
&=& h^{-2} \left( \int_0^1 \left( \int_1^{\infty} |\hat{u}(rt,\xi)|^2 dr \right)^{\half} t^{\frac{\mathrm{Re}F_s(\xi)}{h}} dt \right)^2. \\
\end{eqnarray*}
Changing variables again, we get
\begin{eqnarray*}
& & \int_1^{\infty} \left| \frac{1}{r}\widehat{J^{-1}_{s}u} \right|^2 dr \\
&\leq& h^{-2} \left( \int_0^1 \left( \int_1^{\infty} |\hat{u}(r, \xi)|^2  dr \right)^{\half} t^{\mathrm{Re}\frac{F_s(\xi)}{h}} t^{-\half}dt \right)^2 \\
&=& h^{-2} \int_1^{\infty} |\hat{u}(r, \xi)|^2 dr \left( \frac{h}{\mathrm{Re}F_s(\xi)+\frac{h}{2}} \right)^2\\
&\simeq& \int_1^{\infty} \left| \frac{\hat{u}(r,\xi)}{1 + |\xi|} \right|^2 dr \\
&\leq& \int_1^{\infty} |\hat{u}(r, \xi)|^2 dr\\
\end{eqnarray*}
Therefore
\begin{eqnarray*}
\left\| \frac{1}{r}J^{-1}_{s}u \right\|^2_{L^2(\Rniip)} &=& h^{-n} \left\| \frac{1}{r}\widehat{J^{-1}_{s}u} \right\|^2_{L^2(\Rniip)} \\
                                                        &=& h^{-n} \int_{\R^n} \int_1^{\infty} \left| \frac{1}{r}\widehat{J^{-1}_{s}u} \right|^2 dr \, d \xi \\
                                                        &\lesssim& h^{-n} \int_{\R^n} \int_1^{\infty} |\hat{u}|^2 dr \, d \xi \\
                                                        &=& h^{-n} \|\hat{u}\|^2_{L^2(\Rniip)} \\
                                                        &=& \|u\|^2_{L^2(\Rniip)} \\
\end{eqnarray*}
Similarly,
\[
\int_1^{\infty} \left| \frac{\xi}{r}\widehat{J^{-1}_{s}u} \right|^2 dr \lesssim  \int_1^{\infty} |\hat{u}(r, \xi)|^2 dr
\]
so
\[
\left\| \frac{h}{r} \grad_{\th} J^{-1}_{s}u \right\|^2_{L^2(\Rniip)} \lesssim \|u\|^2_{L^2(\Rniip)}.
\]
Finally, 
\[
h \partial_r \widehat{J^{-1}_{s}u} = -\left( \frac{F_s(\xi)}{r} \right)  \widehat{J^{-1}_{s}u} + \hat{u}
\]
so
\[
\int_1^{\infty} \left| h\partial_r \widehat{J^{-1}_{s}u} \right|^2 dr \lesssim  \int_1^{\infty} |\hat{u}(r, \xi)|^2 dr
\]
and
\[
\left\| h\partial_r J^{-1}_{s}u \right\|^2_{L^2(\Rniip)} \lesssim \|u\|^2_{L^2(\Rniip)}
\]
by the same logic.  

Putting all of this together gives
\[
\| J^{-1}_{s} u \|^2_{H^1_r(\Rniip)} \lesssim \|u\|^2_{L^2(\Rniip)}
\]
for $u \in \S(\Rniip)$.  Therefore by the density of $\S(\Rniip)$ in $L^2(\Rniip)$, $J^{-1}_{s}$ extends to a bounded map 
\[
J^{-1}_{s}: L^2(\Rniip) \rightarrow H^1_r(\Rniip).
\]
Again the proof for $J^{*-1}_{s}$ is similar.

It remains to show that the extensions of $J_{s}^{*}$ and $J^{*-1}_{s}$ are isomorphisms.  Note that if $u \in \S(\Rniip)$, then
\begin{eqnarray*}
\mathcal{F}(J_{s}^{*}J^{*-1}_{s}u) &=& \left(\frac{\overline{F_s(\xi)}}{r} - h\partial_r \right) \widehat{J^{*-1}_{s}u}(r,\xi) \\
              &=& \left(\frac{\overline{F_s(\xi)}}{r}-h\partial_r\right) h^{-1}\int_r^{\infty}\hat{u}(t,\xi) \left( \frac{r}{t}\right)^{\frac{\overline{F_s(\xi)}}{h}} dt \\
              &=& \left(\frac{\overline{F_s(\xi)}}{r} \right) \widehat{J^{*-1}_{s}u} - \left(\frac{\overline{F_s(\xi)}}{r} \right) \widehat{J^{*-1}_{s}u} +\hat{u}(r,\xi) \\
              &=& \hat{u}(r,\xi). \\
\end{eqnarray*}
(Here $\mathcal{F}$ stands for the semiclassical Fourier transform in the $\theta$ variables, just like the hat $\hat{ }$.  I will use this notation when the hat becomes unwieldy.)  Therefore
\begin{equation}\label{JJminus}
J_{s}^{*}J^{*-1}_{s}u = u
\end{equation}
for all $u \in \S(\Rniip)$.  

On the other hand, integration by parts gives
\begin{eqnarray*}
\mathcal{F}(J^{*-1}_{s}J_{s}^{*}u) &=& h^{-1}\int_r^{\infty} \left( \frac{\overline{F_s(\xi)}}{t}-h\partial_t \right)\hat{u}(t,\xi)\left( \frac{r}{t}\right)^{\frac{\overline{F_s(\xi)}}{h}}dt \\
                      &=& h^{-1}\int_r^{\infty} \left( \frac{\overline{F_s(\xi)}}{t}\right)\hat{u}(t,\xi) \left( \frac{r}{t}\right)^{\frac{\overline{F_s(\xi)}}{h}} dt \\
                      & & + h^{-1}\int_r^{\infty} \hat{u}(t) \left( \frac{r}{t}\right)^{\frac{\overline{F_s(\xi)}}{h}}\left( -\frac{\overline{F_s(\xi)}}{t} \right) dt \\
                      & & \left. -\hat{u}(t,\xi) \left( \frac{r}{t}\right)^{\frac{\overline{F_s(\xi)}}{h}} \right|_{t = r}^{\infty}\\
                      &=& \hat{u}(r,\xi),
\end{eqnarray*}
so
\begin{equation}\label{JminusJ}
J^{*-1}_{s}J_{s}^{*}u = u
\end{equation}
for all $u \in \S(\Rniip)$.  

Now consider $J^{*-1}_{s}$.  By \eqref{JJminus},
\[
\|J^{*-1}_{s} u\|_{H^1_r(\Rniip)} \gtrsim \|u\|_{L^2(\Rniip)}
\]
for all $u \in \S(\Rniip)$.  By the density of $\S(\Rniip)$ in $L^2(\Rniip)$, this holds for all $u \in L^2(\Rniip)$.  Therefore $J^{*-1}_{s}$ is 1-1 with closed range.  Now \eqref{JminusJ} implies that $\S(\Rniip)$ is in the range of $J^{*-1}_{s}$, so by the density of $\S(\Rniip)$ in $H^1_r(\Rniip)$, $H^1_r(\Rniip)$ is in the range of $J^{*-1}_{s}$.  Therefore $J^{*-1}_{s}: L^2(\Rniip) \rightarrow H^1_r(\Rniip)$ is an isomorphism.  

Similarly, \eqref{JminusJ} shows that 
\[
\|J_{s}^{*} u\|_{L^2(\Rniip)} \gtrsim \|u\|_{H^1_r(\Rniip)}
\]
for all $u \in \S(\Rniip)$, and hence for all $u \in H^1_r(\Rniip)$.  Therefore $J_{s}^{*}$ is 1-1 with closed range.  Then \eqref{JJminus} implies that $\S(\Rniip)$ is in the range of $J_{s}^{*}$, and so $L^2(\Rniip)$ is in the range of $J_{s}^{*}$.  Therefore  $J_{s}^{*}: H^1_r(\Rniip) \rightarrow L^2(\Rniip)$ is also an isomorphism. 

Note that $J_{s}^{-1} J_{s} u \neq u$ in general, because integration by parts will pick up a boundary term at $r = 1$.  Therefore the extensions of $J_{s}$ and $J_{s}^{-1}$ are not isomorphisms.

\end{proof}

Let $H^1_{r,0}(\Rniip)$ denote the subspace of $H^1_r(\Rniip)$ consisting of functions with trace zero on the hyperplane $r=1$, and let $H^{-1}_r(\Rniip)$ denote the dual space to $H^1_{r,0}(\Rniip)$.  

The operator $J_{s}$ is closely related to a pseudodifferential operator.  In particular, it has the following properties, which will be needed later.  

\begin{lemma}\label{smallJprops}

i) Suppose $w \in \S(\Rniip) $.  Then if $Q$ is a second order semiclassical differential operator with smooth bounded coefficients on $\Rniip$, then
\[
\|(J_{s} Q - Q J_{s})w\|_{H^{-1}_r(\Rniip)} \lesssim hC_{\d}\|rw\|_{H^1(\Rniip)}
\] 

ii) Suppose $w \in \S(\Rniip) $.  Let $\chi \in \S(\Rniip)$.  Then
\[
\| J_{s} \chi J^{-1}_{s}w \|_{L^2(\Rniip)} \gtrsim \|\chi w\|_{L^2(\Rniip)} - hC_{\d}\| r w\|_{L^2(\Rniip)}
\]

\end{lemma}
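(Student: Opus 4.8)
The plan is to exploit the structure of $J_s$ as essentially a Fourier multiplier in $\th$ composed with a first-order radial operator, so that all commutators with $\th$-differential operators reduce to commutators of the multiplier $F_s(\xi)/r$ with lower-order pieces. For part (i), I would write $Q = a\, (h\partial_r)^2 + (h\partial_r)\, b_j (h\partial_{\th_j}) + c_{jk}(h\partial_{\th_j})(h\partial_{\th_k}) + \text{(lower order)}$ with smooth bounded coefficients, conjugate by the Fourier transform in $\th$, and compute $[J_s, Q]$ term by term. The operator $J_s$ acts on $\hat w$ by $(F_s(\xi)/r + h\partial_r)$, so it commutes exactly with any constant-coefficient operator in $h\partial_r$ and in $\xi$; every commutator term comes either from the $r$-dependence of $F_s(\xi)/r$ hitting a $\partial_r$ (which produces a factor $h F_s(\xi)/r^2$, i.e.\ order one in $\xi$ but with an $r^{-2}$ weight and an explicit $h$) or from $[\,F_s(\xi), \text{coefficient}(r,\th)\,]$-type terms, which by the standard symbolic calculus are $h$ times an operator one order lower than the naive product, with bounds depending on the derivatives of $F_s$ — hence the constant $C_\d$ from the remark following the construction of $F_s$. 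Collecting, $[J_s,Q]w$ is $h$ times a second-order operator applied to $w$ with an $r^{-1}$ (or $r^{-2}$) weight; bounding this in $H^{-1}_r$ against $\|rw\|_{H^1}$ then follows from the mapping properties in the lemma preceding this one (pseudodifferential operators of order $m$ act $H^{k+m}\to H^k$ uniformly in the transversal variable) together with the fact that the $H^{-1}_r$ norm is the dual of $H^1_{r,0}$, which absorbs the $1/r$ weights. The factor $r$ on the right compensates exactly for the worst $r^{-2}$ weight against the $r^{-1}$ built into $H^1_r$ and the $r^{-1}$ dual weight.

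For part (ii), I would use the near-inverse relations from Lemma \ref{smallJbounds}: since $J_s J_s^{-1} = I$ on $\S(\Rniip)$ (this is the direction that holds without boundary terms, as $J_s^{-1}$ integrates from $r=1$), we have $J_s \chi J_s^{-1} w = \chi J_s J_s^{-1} w + [J_s,\chi]J_s^{-1}w = \chi w + [J_s,\chi]J_s^{-1}w$. So it suffices to show $\|[J_s,\chi]J_s^{-1}w\|_{L^2} \lesssim hC_\d\|rw\|_{L^2}$. Again $[J_s,\chi]$ is $h$ times a zeroth-order operator in $\th$ (the commutator of the multiplier $F_s(\xi)$ with multiplication by $\chi$, plus the term from $h\partial_r$ hitting $\chi$), with an $r^{-1}$ weight in the first piece; by the symbol calculus its $L^2\to L^2$ norm, uniformly in $r$, is $\lesssim hC_\d$, and then $J_s^{-1}: L^2 \to H^1_r \hookrightarrow$ (weighted $L^2$) is bounded by Lemma \ref{smallJbounds}. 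The $r$-weight on the right is there to dominate the $1/r$ weights appearing in the commutator and in $J_s^{-1}$'s image; tracking these carefully gives the stated bound. Finally apply the reverse triangle inequality $\|J_s\chi J_s^{-1}w\|_{L^2} \geq \|\chi w\|_{L^2} - \|[J_s,\chi]J_s^{-1}w\|_{L^2}$.

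The main obstacle I anticipate is bookkeeping the $r$-weights correctly: the operator $J_s$ mixes an $r^{-1}$-weighted multiplier with $h\partial_r$, so commutators generate $r^{-2}$ weights, and one must verify that the space $H^1_r$ (with its $\|u/r\|$ and $\|(h/r)\grad_\th u\|$ terms) and its dual $H^{-1}_r$ are exactly calibrated so that these weights are absorbed after multiplying the right-hand side by $r$. This is essentially the reason the weighted spaces were introduced in the first place, so the estimates should close, but the constants must be shown independent of $h$ (with the $h$-gain explicit) and the $C_\d$-dependence isolated in the $F_s$-derivatives. A secondary point is making the symbolic calculus for the $\th$-pseudodifferential operators rigorous as operators on the half-space $\Rniip$ uniformly in $r$; this is handled by the lemma on page preceding, applied fiber-wise in $r$, exactly as in the proof of Proposition \ref{simpleCarl}.
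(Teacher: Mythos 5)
Your proposal follows essentially the same route as the paper: for (i) it splits $J_s = h\partial_r + \tfrac{1}{r}T$ with $T$ the Fourier multiplier $F_s$, decomposes $Q$ into radial and angular pieces, gains the factor $h$ from the symbolic calculus applied to $[T,\cdot]$ and from $\partial_r$ hitting the $r$-dependent coefficients, and closes the estimate via the duality of $H^{-1}_r$ with $H^1_{r,0}$ and the fiber-wise pseudodifferential boundedness lemma; for (ii) it uses the right-inverse identity $J_sJ_s^{-1}=I$ (correctly identified as the direction without boundary terms), the $O(hC_\d)$ bound on $[J_s,\chi]$, and the boundedness of $J_s^{-1}:L^2\to H^1_r$, exactly as in the paper. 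No substantive differences.
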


The $C_{\d}$ factor is written explicitly to track the $\d$ dependence.  

\begin{proof}

i)  
First, note that multiplication by $1/r$ is a bounded operator from $H^1_{r,0}(\Rniip)$ to $H^1_0(\Rniip)$.  Therefore by duality, it is a bounded operator from $H^{-1}(\Rniip)$ to $H^{-1}_r(\Rniip)$, and so
\[
\|(J_{s} Q - Q J_{s})w\|_{H^{-1}_r(\Rniip)} \lesssim \|r(J_{s} Q - Q J_{s})w\|_{H^{-1}(\Rniip)} 
\]

Note that $J_s = h\partial_r + \frac{1}{r}T$, where  $T$ is a semiclassical pseudodifferential operator on $\Rn$ of order 1.  Meanwhile, $Q$ can be written as a combination of $\partial_r$ derivatives and differential operators on $\Rn$:
\[
Q = Ah^2\partial_r^2 + Bh\partial_r + C
\]
where $A,B,$ and $C$ are (perhaps r-dependent) differential operators of orders $0, 1,$ and $2$ respectively on $\Rn$ for each fixed $r$, with bounds uniform in $r$.  

If $w \in \S(\Rniip)$, then $Q w \in \S(\Rniip)$.  

Then
\[
\|r(J_{s} Q - Q J_{s})w\|_{H^{-1}(\Rniip)} = \|r[h \partial_r + \frac{1}{r}T, Ah^2\partial_r^2 + Bh\partial_r + C]w\|_{H^{-1}(\Rniip)}.
\]

Note that $T$ commutes with $\partial_r$.  Therefore we have
\begin{eqnarray*}
\|r(J_{s} Q - Q J_{s})w\|_{H^{-1}(\Rniip)} &\leq& \|r[h \partial_r, Q]w\|_{H^{-1}(\Rniip)} + \| [T,A]h^2\partial_r^2 w \|_{H^{-1}(\Rniip)} \\
                                          & & + \| \frac{h}{r}[T,A]h\partial_r w\|_{H^{-1}(\Rniip)} + \| \frac{2h^2}{r^2}[T,A]w\|_{H^{-1}(\Rniip)} \\
                                          & & + \|[T,B]h\partial_r w\|_{H^{-1}(\Rniip)} + \| \frac{h}{r} [T,B]w \|_{H^{-1}(\Rniip)} \\
                                          & & + \|[T,C] w\|_{H^{-1}(\Rniip)} \\
\end{eqnarray*}

Now $r[h\partial_r, Q] = hrE = hEr +h^2E'$, where $E$ and $E'$ are second and first order semiclassical differential operators, by the product rule.  Meanwhile, $[T,A] = hE_0, [T,B] = hE_1,$ and $[T,C] = hE_2$, where $E_0, E_1,$ and $E_2$ are semiclassical pseudodifferential operators on $\Rn$ of orders $0, 1,$ and $2$ respectively.  Therefore

\begin{eqnarray*}
& & \|r(J_{s} Q - Q J_{s})w\|_{H^{-1}(\Rniip)} \\
&\leq& \|hEr w\|_{H^{-1}(\Rniip)} + \|h^2 E' w\|_{H^{-1}(\Rniip)} + \| hE_0 h^2\partial_r^2 w \|_{H^{-1}(\Rniip)} \\
& & + \| \frac{h^2}{r}E_0 h\partial_r w\|_{H^{-1}(\Rniip)} + \| \frac{2h^3}{r^2}E_0 w\|_{H^{-1}(\Rniip)} \\
& & + \| hE_1 h\partial_r w\|_{H^{-1}(\Rniip)} + \| \frac{h}{r}E_1 w\|_{H^{-1}(\Rniip)} + \|hE_2 w\|_{H^{-1}(\Rniip)}\\
\end{eqnarray*}
 
Now $E$ is bounded from $H^1(\Rniip)$ to $H^{-1}(\Rniip)$, and $E'$ is bounded from $L^2(\Rniip)$ to $H^{-1}(\Rniip)$.  In addition, by the lemma on the boundedness of pseudodifferential operators on $\Rn$ in $\RnI$, $E_1^{*}$ is bounded from $H^1_0(\Rniip)$ to $L^2(\Rniip)$, so by duality, $E_1$ is bounded from $L^2(\Rniip)$ to $H^{-1}(\Rniip)$.  Meanwhile, if $G$ is an invertible semiclassical pseudodifferential operator of order $1$ on $\Rn$, then $G$ is bounded from $L^2(\Rniip)$ to $H^{-1}(\Rniip)$, and $G^{-1}E_2$ is bounded from $H^1(\Rniip)$ to $L^2(\Rniip)$, so $E_2$ is bounded from $H^1(\Rniip)$ to $H^{-1}(\Rniip)$.  Finally, $E_0^{*}$ is bounded from $H^1(\Rniip)$ to $H^1(\Rniip)$ and maps functions with trace 0 on the boundary of $\Rniip$ to other functions with trace 0 on that boundary, so it is bounded from $H^1_0(\Rniip)$ to $H^1_0(\Rniip)$. Therefore by duality, $E_0$ is bounded from $H^{-1}(\Rniip)$ to $H^{-1}(\Rniip)$.  Also, note that $\frac{1}{r} \leq 1$ on $\Rniip$. 

Therefore
\begin{eqnarray*}
& & \|(J_{s} Q - Q J_{s})w\|_{H^{-1}_r(\Rniip)} \\
&\lesssim& h\|rw\|_{H^{1}(\Rniip)} + h^2 \|w\|_{L^2(\Rniip)} + h C_{\d} \| h^2\partial_r^2 w \|_{H^{-1}(\Rniip)} \\
& & + h^2 C_{\d} \| h\partial_r w\|_{H^{-1}(\Rniip)} + h^3 C_{\d} \| w\|_{H^{-1}(\Rniip)} \\
& & + h C_{\d} \| h\partial_r w\|_{L^2(\Rniip)} + h^2 C_{\d} \| w\|_{L^2(\Rniip)} + h C_{\d} \| w\|_{H^{1}(\Rniip)} \\
&\lesssim& h C_{\d} \| r w \|_{H^1(\Rniip)} \\
\end{eqnarray*}
The $C_{\d}$ comes from the dependence of $T$ upon $\d$. 

\vspace{4mm}

ii) Now 
\begin{eqnarray*}
\| J_{s} \chi J^{-1}_{s}w \|_{L^2(\Rniip)} &=&    \| (h\partial_r + \frac{T}{r} )\chi J^{-1}_{s}w \|_{L^2(\Rniip)} \\
                                          &\geq& \| \chi (h\partial_r + \frac{T}{r} ) J^{-1}_{s}w \|_{L^2(\Rniip)} \\
                                          & &    - \| hE J^{-1}_{s}w \|_{L^2(\Rniip)} \\
\end{eqnarray*}
where for each fixed $r$, $E$ is an order zero pseudodifferential operator on $S^n$.  Therefore
\begin{eqnarray*}
\| J_{s} \chi J^{-1}_{s}w \|_{L^2(\Rniip)} &\geq& \| \chi J_s J^{-1}_{s}w \|_{L^2(\Rniip)} - h C_{\d} \| J^{-1}_{s}w \|_{L^2(\Rniip)} \\
                                          &\geq& \| \chi w \|_{L^2(\Rniip)} - h C_{\d} \| r w \|_{L^2(\Rniip)} \\
\end{eqnarray*}

\end{proof}

It would be nice if $\|J_{s}w\|_{H^{-1}_r(\Rniip)} \simeq \|w\|_{L^2(\Rniip)}$, but this is not quite true: since $J_s$ is not an isomorphism from $H^1_{0,r}(\Rniip)$ to $L^2(\Rniip)$, there is no reason to expect this to be true.  Instead, we have the following result.

\begin{lemma}\label{smallJHneg}
Suppose $u \in \S(\Rniip)$.  If $g$ is defined by 
\[
\hat{g}(r, \xi) = \frac{2\mathrm{Re}F_s(\xi)-h}{h}\int_1^{\infty} \hat{u}(t,\xi) r^{-\frac{F_s(\xi)}{h}} t^{-\frac{\overline{F_s(\xi)}}{h}}dt ,
\]
then
\[
\|J_{s}u\|_{H^{-1}_r(\Rniip)} \simeq \|u - g\|_{L^2(\Rniip)}
\] 
\end{lemma}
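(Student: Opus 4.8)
The plan is to identify the right-hand side $\|u-g\|_{L^2(\Rniip)}$ with $\|P_Vu\|_{L^2(\Rniip)}$, where $P_V$ is the orthogonal projection onto the subspace $V:=J_s^*\!\left(H^1_{r,0}(\Rniip)\right)$ of $L^2(\Rniip)$, and to match this with $\|J_su\|_{H^{-1}_r(\Rniip)}$ through duality. First I would set up the duality: for $u\in\S(\Rniip)$ the function $J_su$ lies in $\S(\Rniip)$ (the symbol $F_s$ has polynomial growth and bounded derivatives), so in particular $\|rJ_su\|_{L^2(\Rniip)}<\infty$, hence $\ph\mapsto\langle J_su,\ph\rangle_{L^2(\Rniip)}$ is a bounded functional on $H^1_{r,0}(\Rniip)$ and
\[
\|J_su\|_{H^{-1}_r(\Rniip)}=\sup\{|\langle J_su,\ph\rangle|:\ph\in H^1_{r,0}(\Rniip),\ \|\ph\|_{H^1_r}\le1\}.
\]
Since $\ph$ has trace zero on $\{r=1\}$ and everything decays as $r\to\infty$, integrating the $h\partial_r$ part by parts produces no boundary term, so $\langle J_su,\ph\rangle=\langle u,J_s^*\ph\rangle$; I would establish this first for $\ph\in\Czinf$ of the open region $\{r>1\}$ and then extend it to all of $H^1_{r,0}(\Rniip)$ by density, using that both sides are $H^1_r$-continuous in $\ph$. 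By Lemma \ref{smallJbounds}, $J_s^*$ is an isomorphism $H^1_r(\Rniip)\to L^2(\Rniip)$; restricted to the closed subspace $H^1_{r,0}(\Rniip)$ it is an isomorphism onto the closed subspace $V$, with $\|J_s^*\ph\|_{L^2}\simeq\|\ph\|_{H^1_r}$, so the image of the $H^1_r$-unit ball of $H^1_{r,0}(\Rniip)$ under $J_s^*$ is comparable to the $L^2$-unit ball of $V$. Therefore
\[
\|J_su\|_{H^{-1}_r(\Rniip)}\simeq\sup\{|\langle u,\psi\rangle|:\psi\in V,\ \|\psi\|_{L^2}\le1\}=\|P_Vu\|_{L^2(\Rniip)}=\|u-P_{V^\perp}u\|_{L^2(\Rniip)},
\]
and it remains only to check $P_{V^\perp}u=g$.

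Next I would compute $V^\perp$. A function $\psi\in L^2(\Rniip)$ is in $V^\perp$ iff $\langle\psi,J_s^*\ph\rangle=0$ for all $\ph\in H^1_{r,0}(\Rniip)$; testing against $\Czinf$ of $\{r>1\}$ (which is dense in $H^1_{r,0}(\Rniip)$), this says precisely $J_s\psi=0$ in $\mathcal{D}'(\{r>1\})$. Taking the (semiclassical) Fourier transform in $\th$, this is the first-order ODE $h\partial_r\hat\psi(r,\xi)=-\tfrac{F_s(\xi)}{r}\hat\psi(r,\xi)$, whose solutions are $\hat\psi(r,\xi)=c(\xi)\,r^{-F_s(\xi)/h}$; since $\mathrm{Re}F_s(\xi)$ is bounded below by a positive constant (and grows like $1+|\xi|$), one has $r\mapsto r^{-F_s(\xi)/h}\in L^2((1,\infty),dr)$ for $h$ small, with $\big\|r^{-F_s(\xi)/h}\big\|_{L^2((1,\infty))}^2=\tfrac{h}{2\mathrm{Re}F_s(\xi)-h}$, so $V^\perp$ consists of exactly these $\psi$ (with $c$ such that $\psi\in L^2$). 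Because Plancherel in $\th$ links the $L^2(\Rniip)$ inner product with the $L^2((1,\infty),dr)$ inner products fiberwise in $\xi$, the projection $P_{V^\perp}$ acts fiberwise as the orthogonal projection of $\hat u(\cdot,\xi)$ onto the span of $r\mapsto r^{-F_s(\xi)/h}$:
\[
\widehat{P_{V^\perp}u}(r,\xi)=\frac{\int_1^\infty\hat u(t,\xi)\,\overline{t^{-F_s(\xi)/h}}\,dt}{\big\|r^{-F_s(\xi)/h}\big\|_{L^2((1,\infty))}^2}\,r^{-F_s(\xi)/h}=\frac{2\mathrm{Re}F_s(\xi)-h}{h}\Big(\int_1^\infty\hat u(t,\xi)\,t^{-\overline{F_s(\xi)}/h}\,dt\Big)r^{-F_s(\xi)/h},
\]
which is exactly $\hat g(r,\xi)$. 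Hence $g=P_{V^\perp}u$, and the two displayed comparisons combine to give $\|J_su\|_{H^{-1}_r(\Rniip)}\simeq\|u-g\|_{L^2(\Rniip)}$.

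The only genuinely delicate points are in the first paragraph: verifying that $J_su$ really defines an element of $H^{-1}_r(\Rniip)$ and that the integration by parts yielding $\langle J_su,\ph\rangle=\langle u,J_s^*\ph\rangle$ is legitimate for every $\ph\in H^1_{r,0}(\Rniip)$ (both of which use that $u$ is Schwartz, since $H^1_{r,0}(\Rniip)$ is not contained in $L^2(\Rniip)$), together with checking that the isomorphism bounds quoted from Lemma \ref{smallJbounds} hold uniformly for $h$ small. Once the subspace $V$ and its orthogonal complement $V^\perp$ are correctly identified as the kernel of $J_s$ in $\{r>1\}$, the formula for $g$ is just the one-dimensional, fiberwise projection computation above.
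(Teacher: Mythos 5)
Your proof is correct, and it rests on the same backbone as the paper's: the duality characterization of the $H^{-1}_r$ norm, the integration by parts $\langle J_su,\ph\rangle=\langle u,J_s^*\ph\rangle$ with no boundary term because $\ph$ has trace zero at $r=1$, and the fact from Lemma \ref{smallJbounds} that $J_s^*$ is an isomorphism from $H^1_r(\Rniip)$ onto $L^2(\Rniip)$. Where you genuinely diverge is in how $g$ enters. The paper takes the formula for $g$ as given and verifies two computational facts: that $J_sg=0$ (so $u$ may be replaced by $u-g$ in the pairing, giving the upper bound by Cauchy--Schwarz), and that $\widehat{J_s^{*-1}g}(1,\xi)=\widehat{J_s^{*-1}u}(1,\xi)$, so that the explicit extremizer $w=J_s^{*-1}(u-g)$ lies in $H^1_{r,0}(\Rniip)$ and yields the lower bound. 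You instead identify $V=J_s^*\bigl(H^1_{r,0}(\Rniip)\bigr)$ as a closed subspace, reduce both bounds at once to the Hilbert-space identity $\sup_{\psi\in V,\,\|\psi\|\le1}|\langle u,\psi\rangle|=\|P_Vu\|$, compute $V^\perp$ as the $L^2$ kernel of $J_s$ on $\{r>1\}$ (the fiberwise ODE solutions $c(\xi)r^{-F_s(\xi)/h}$), and then derive the formula for $g$ as the fiberwise rank-one projection, using $\|r^{-F_s(\xi)/h}\|^2_{L^2(1,\infty)}=h/(2\mathrm{Re}F_s(\xi)-h)$. This buys a conceptual explanation of where $g$ comes from (it is $P_{V^\perp}u$, so the trace identity the paper checks by hand is automatic from $u-g\in V$), at the cost of needing the density of compactly supported smooth functions on $\{r>1\}$ in $H^1_{r,0}(\Rniip)$ to nail down $V^\perp$ -- a standard fact, but worth stating, and of the same flavor as the density facts the paper already uses. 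The paper's verification-style argument is more self-contained but leaves the formula for $g$ unmotivated.
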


\begin{proof}
Suppose $u \in \S(\Rniip)$.  Define $g$ as above.  We have
\begin{eqnarray*}
& & \int_1^{\infty}|\hat{g}(r,\xi)|^2 dr \\
&=& \left| \frac{2\mathrm{Re}F_s-h}{h} \right|^2 \int_1^{\infty} \left| \int_1^{\infty}\hat{u}(t,\xi) r^{-\frac{F_s}{h}}t^{-\frac{\overline{F_s}}{h}}dt \right|^2 dr \\
&\leq& \left| \frac{2\mathrm{Re}F_s-h}{h} \right|^2 \int_1^{\infty} \int_1^{\infty} |\hat{u}(t,\xi)|^2 dt \int_1^{\infty} (rt)^{-2\frac{\mathrm{Re} F_s}{h}}dt \, dr \\
&=& \left| \frac{2\mathrm{Re}F_s-h}{h} \right|^2 \int_1^{\infty} |\hat{u}(t,\xi)|^2 dt  \int_1^{\infty} (t)^{-2\frac{\mathrm{Re} F_s}{h}} dt \int_1^{\infty} (r)^{-2\frac{\mathrm{Re} F_s}{h}} dr \\
&=& \left| \frac{2\mathrm{Re}F_s-h}{h} \right|^2 \int_1^{\infty} |\hat{u}(t,\xi)|^2 dt  \left( \frac{h}{2\mathrm{Re}F_s-h} \right) \left( \frac{h}{2\mathrm{Re} F_s-h} \right) \\
&=& \int_1^{\infty} |\hat{u}(t,\xi)|^2 dt \\
\end{eqnarray*}
Therefore $g$ makes sense as an element of $L^2(\Rniip)$, and $\|g\|_{L^2(\Rniip)} \leq \|u\|_{L^2(\Rniip)}$. Note that 
\[
\widehat{J_{s}g} = \left( \frac{F_s(\xi)}{r} + h\partial_r \right)\hat{g} = 0.
\]
Therefore
\begin{equation*}
\begin{split}
\|J_{s} u \|_{H^{-1}_r(\Rniip)} &= \sup_{w \in H^1_{0,r}(\Rniip), w \neq 0} \frac{|(J_{s}u,w)|}{\|w\|_{H^1_r(\Rniip)}} \\
                               &= \sup_{w \in H^1_{0,r}(\Rniip), w \neq 0} \frac{|(J_{s}(u-g),w)|}{\|w\|_{H^1_r(\Rniip)}} \\
                               &= \sup_{w \in H^1_{0,r}(\Rniip), w \neq 0} \frac{|(u-g,J_{s}^{*}w)|}{\|w\|_{H^1_r(\Rniip)}}.
\end{split}
\end{equation*}
Since $J_{s}^{*}: H^1_r(\Rniip) \rightarrow L^2(\Rniip)$ is an isomorphism, 

\begin{equation}\label{dualitycalc}
\|J_{s} u \|_{H^{-1}_r(\Rniip)} \simeq \sup_{w \in H^1_{0,r}(\Rniip), J_{s}^{*}w \neq 0} \frac{|(u-g,J_{s}^{*}w)|}{\|J_{s}^{*}w\|_{L^2(\Rniip)}}
\end{equation}
Now $J_{s}^{*}w \in L^2(\Rniip)$, so 
\[
\|J_{s} u \|_{H^{-1}_r(\Rniip)} \lesssim \|u - g\|_{L^2(\Rniip)}.
\]
On the other hand, note that $u - g = J_{s}^{*}J_{s}^{*-1}(u-g)$.  $J_{s}^{*-1}(u-g) \in H^1_r(\Rniip)$, and 
\begin{equation*}
\begin{split}
\widehat{J^{*-1}_{s}g}(1,\xi) &= h^{-1}\int_1^{\infty} h^{-1}(2\mathrm{Re}F_s-h) \left( \int_1^{\infty}\hat{u}(t,\xi)a^{-\frac{F_s}{h}}t^{-\frac{\overline{F_s}}{h}}dt \right) a^{-\frac{\overline{F_s}}{h}}da \\
                    &= h^{-1}\int_1^{\infty}\hat{u}(t,\xi)t^{-\frac{\overline{F_s}}{h}}dt (h^{-1})(2\mathrm{Re}F_s-h)\int_1^{\infty}  a^{-2\mathrm{Re}\frac{F_s}{h}}da \\
                    &= h^{-1}\int_1^{\infty}\hat{u}(t,\xi)t^{-\frac{\overline{F_s}}{h}}dt \\
                    &= \widehat{J^{*-1}_{s}u}(1,\xi) \\
\end{split}                             
\end{equation*}
Therefore $J_{s}^{*-1}(u-g) \in H^1_{r,0}(\Rniip)$.  If $u-g = 0$, then the lemma is true by \eqref{dualitycalc}.  Otherwise, we can pick $w = J_{s}^{*-1}(u-g)$ in \eqref{dualitycalc} to show that 
\[
\|J_{s} u \|_{H^{-1}_r(\Rniip)} \gtrsim \|u - g\|_{L^2(\Rniip)}.
\]
This finishes the proof.  

\end{proof}

\section{The Small Frequency Case}

Now we are ready to prove the small frequency case.  Suppose $\chi_2(r,\theta) \in C^{\infty}(\Rniip)$ is a cutoff function which is 1 on $\s(\Omt)$ and has support inside $\s(\Omt_2)$.

If $w \in \Czinf(\s(\Omt))$, then $w_s \in \S(\Rniip)$, supported away from $r = 1$.  Therefore $J^{-1}_{s}w_s \in \S(\Rniip)$, and is supported away from $r = 1$.  Then $\chi_2 J^{-1}_{s} w_s$ is in $\Czinf(\s(\Omt_2))$.  Therefore by \eqref{flatCarl},
\[
\frac{h}{\sqrt{\e}} \|\chi_2 J^{-1}_{s} w_s \|_{H^1(\s(\Omt_2))} \lesssim \|\Lphets \chi_2 J^{-1}_{s} w_s \|_{L^2(\s(\Omt_2))}.  
\]
Since $\chi_2 J^{-1}_{s} w_s \in \Czinf(\s(\Omt_2))$, the $H^1$ and $H^1_r$ norms are comparable, so 
\[
\frac{h}{\sqrt{\e}} \|\chi_2 J^{-1}_{s} w_s \|_{H^1_r(\Rniip)} \lesssim \|\Lphets \chi_2 J^{-1}_{s} w_s \|_{L^2(\Rniip)}.  
\]
Using the boundedness properties from Lemma \ref{smallJbounds}, 
\[
\frac{h}{\sqrt{\e}} \|J_{s} \chi_2 J^{-1}_{s} w_s \|_{L^2(\Rniip)} \lesssim \|\Lphets \chi_2 J^{-1}_{s} w_s \|_{L^2(\Rniip)} 
\]
so applying the second part of Lemma \ref{smallJprops}
\[
\frac{h}{\sqrt{\e}}\|\chi_2 w_s\|_{L^2(\Rniip)} \lesssim \|\Lphets \chi_2 J^{-1}_{s}w_s \|_{L^2(\Rniip)} + C_{\d}\frac{h^2}{\e}\|r w_s \|_{L^2(\Rniip)}.
\]
Now $\chi_2 w_s = \chi_2 P w$.  Since $w$ is only supported on the region where $\chi_2$ is identically 1, 
\[
\chi_2 w_s = Pw + O(h^{\infty}) E w = w_s + O(h^{\infty}) E w
\]
where $E$ is an pseudodifferential operator of order $0$ (actually a smoothing operator) on $\Rn$.  Therefore
\[
\frac{h}{\sqrt{\e}}\|\chi_2 w_s\|_{L^2(\Rniip)} \gtrsim \frac{h}{\sqrt{\e}}\|w_s\|_{L^2(\Rniip)} - O(h^{\infty})\|w\|_{L^2(\Rniip)},
\]
and so
\[
\frac{h}{\sqrt{\e}}\|w_s\|_{L^2(\Rniip)} \lesssim \|\Lphets \chi_2 J^{-1}_{s}w_s \|_{L^2(\Rniip)} +C_{\d}\frac{h^2}{\e}\| r w_s \|_{L^2(\Rniip)} + O(h^{\infty})\|w\|_{L^2(\Rniip)}.
\]
For small enough $h$, the second last term can be absorbed into the left side ($r$ is bounded on the support of $w_s$) to give 
\[
\frac{h}{\sqrt{\e}} \|w_s \|_{L^2(\Rniip)} \lesssim \|\Lphets \chi_2 J^{-1}_{s} w_s \|_{L^2(\s(\Omt_2))} + O(h^{\infty})\|w\|_{L^2(\Rniip)}.
\]
By the product rule,
\[
\frac{h}{\sqrt{\e}} \|w_s \|_{L^2(\Rniip)} \lesssim \|\chi_2 \Lphets J^{-1}_{s} w_s \|_{L^2(\s(\Omt_2))} + h\|J^{-1}_{s} w_s \|_{H^1(\s(\Omt_2))} + O(h^{\infty})\|w\|_{L^2(\Rniip)}.
\]
On $\s(\Omt_2)$, the $H^1$ and $H^1_r$ norms are comparable, so
\[
\frac{h}{\sqrt{\e}} \|w_s \|_{L^2(\Rniip)} \lesssim \|\Lphets J^{-1}_{s} w_s \|_{L^2(\Rniip)} + h\|J^{-1}_{s} w_s \|_{H^1_r(\Rniip))} + O(h^{\infty})\|w\|_{L^2(\Rniip)}.
\]
Using the boundedness properties again, 
\[
\frac{h}{\sqrt{\e}} \|w_s \|_{L^2(\Rniip)} \lesssim \|\Lphets J^{-1}_{s} w_s \|_{L^2(\Rniip)} + h\| w_s \|_{L^2(\Rniip)} +O(h^{\infty})\|w\|_{L^2(\Rniip)}.
\]
The second last term can be absorbed into the left side to give

\begin{equation}\label{firstpart}
\frac{h}{\sqrt{\e}} \|w_s \|_{L^2(\Rniip)} \lesssim \|\Lphets J^{-1}_{s} w_s \|_{L^2(\Rniip)} + O(h^{\infty})\|w\|_{L^2(\Rniip)}.
\end{equation}

I want to combine this last inequality with Lemma \ref{smallJHneg} to get 
\[
\frac{h}{\sqrt{\e}} \|w_s \|_{L^2(\Rniip)} \lesssim \|J_{s}\Lphets J^{-1}_{s} w_s \|_{H^{-1}_r(\Rniip)} + O(h^{\infty})\|w \|_{L^2(\Rniip)}.
\]
To do this I need to show that if $u = \Lphets J^{-1}_{s} w_s$, then the function $g$ defined in Lemma \ref{smallJHneg} satisfies a bound like
\[
\|g\|_{L^2(\Rniip)} \leq \half \|u\|_{L^2(\Rniip)}.
\] 
This is not true in general, but happens in this case because of the particular form of $u$.  Let $v = J^{-1}_s w_s$, and consider $\hat{g}(r,\xi)$.  
\begin{eqnarray*}
\hat{g} &=& \frac{2\mathrm{Re}F_s-h}{h}\int_1^{\infty} \widehat{\Lphets v}(t,\xi) r^{-\frac{F_s}{h}} t^{-\frac{\overline{F_s}}{h}}dt \\
    &=& \frac{2\mathrm{Re}F_s-h}{h}\int_1^{\infty} h^2\partial_t^2 \hat{v}(t,\xi) r^{-\frac{F_s}{h}} t^{-\frac{\overline{F_s}}{h}}dt \\
    & & + \frac{2\mathrm{Re}F_s-h}{h}\int_1^{\infty} \mathcal{F}(|\g_f|^2 h^2\partial_t^2 v)(t,\xi) r^{-\frac{F_s}{h}} t^{-\frac{\overline{F_s}}{h}}dt \\
    & & - \frac{2\mathrm{Re}F_s-h}{h}\int_1^{\infty} \frac{2}{t} h\partial_t \hat{v}(t,\xi) r^{-\frac{F_s}{h}} t^{-\frac{\overline{F_s}}{h}}dt \\
    & & - \frac{2\mathrm{Re}F_s-h}{h}\int_1^{\infty} \frac{2}{t}\mathcal{F}\left( \frac{h}{\e}\log(tf(\th))h\partial_t v \right)(t,\xi) r^{-\frac{F_s}{h}} t^{-\frac{\overline{F_s}}{h}} dt \\
    & & - \frac{2\mathrm{Re}F_s-h}{h}\int_1^{\infty} \frac{2}{t}\mathcal{F}\left( \b_f \cdot h\grad_{\th}h\partial_t v \right)(t,\xi) r^{-\frac{F_s}{h}} t^{-\frac{\overline{F_s}}{h}} dt \\
    & & + \frac{2\mathrm{Re}F_s-h}{h}\int_1^{\infty} \frac{1}{t^2}\mathcal{F}((1+h^2 L_{S^n})v)(t,\xi) r^{-\frac{F_s}{h}} t^{-\frac{\overline{F_s}}{h}}dt \\
    & & + \frac{2\mathrm{Re}F_s-h}{h}\int_1^{\infty} \frac{1}{t^2} \mathcal{F}\left( \left( (2\frac{h}{\e}\log(tf(\th))+\frac{h^2}{\e^2}\log^2(tf(\th)) \right) v \right)(t,\xi) r^{-\frac{F_s}{h}} t^{-\frac{\overline{F_s}}{h}}dt. \\
\end{eqnarray*}
Here $\mathcal{F}$ represents the same thing the hat $\hat{ }$ does in the case where LaTeX's wide hat looks too strange to be used.  

Rewriting, we have
\begin{eqnarray*}        
\hat{g} &=& \frac{2\mathrm{Re}F_s-h}{h}\int_1^{\infty} (1 + K^2) h^2\partial_t^2 \hat{v}(t,\xi) r^{-\frac{F_s}{h}} t^{-\frac{\overline{F_s}}{h}}dt \\
    & & - \frac{2\mathrm{Re}F_s-h}{h}\int_1^{\infty} \frac{2}{t}(1+iK\xi_n)h\partial_t \hat{v}(t,\xi) r^{-\frac{F_s}{h}} t^{-\frac{\overline{F_s}}{h}}dt \\
    & & + \frac{2\mathrm{Re}F_s-h}{h}\int_1^{\infty} \frac{1}{t^2}(1-|\xi|^2)\hat{v}(t,\xi) r^{-\frac{F_s}{h}} t^{-\frac{\overline{F_s}}{h}}dt \\
    & & + \frac{2\mathrm{Re}F_s-h}{h}\int_1^{\infty} \mathcal{F}((|\g_f|^2-K^2) h^2\partial_t^2 v)(t,\xi) r^{-\frac{F_s}{h}} t^{-\frac{\overline{F_s}}{h}}dt \\
    & & -\frac{2\mathrm{Re}F_s-h}{h}\int_1^{\infty} \mathcal{F} \left( \frac{2}{t}\frac{h}{\e}\log(tf(\th))h\partial_t v \right)(t,\xi) r^{-\frac{F_s}{h}} t^{-\frac{\overline{F_s}}{h}} dt \\
    & & - \frac{2\mathrm{Re}F_s-h}{h}\int_1^{\infty} \mathcal{F}\left( \frac{2}{t}(\b_f-Ke_n)\cdot h\grad_{\th}h\partial_t v \right)(t,\xi) r^{-\frac{F_s}{h}} t^{-\frac{\overline{F_s}}{h}}dt \\
    & & + \frac{2\mathrm{Re}F_s-h}{h}\int_1^{\infty} \mathcal{F} \left( \frac{1}{t^2} \left( (2\frac{h}{\e}\log(tf(\th))+\frac{h^2}{\e^2}\log^2(tf(\th)) \right) v \right)(t,\xi) r^{-\frac{F_s}{h}} t^{-\frac{\overline{F_s}}{h}}dt. \\
    & & + \frac{2\mathrm{Re}F_s-h}{h}\int_1^{\infty} \frac{1}{t^2}\mathcal{F}(h^2(L_{S^n} -\Lap_{\th})v)(t,\xi) r^{-\frac{F_s}{h}} t^{-\frac{\overline{F_s}}{h}}dt \\
\end{eqnarray*}
Integrating by parts in the first term gives
\begin{eqnarray*}
& & \frac{2\mathrm{Re}F_s-h}{h}\int_1^{\infty} (1 + K^2) h^2\partial_t^2 \hat{v}(t,\xi) r^{-\frac{F_s}{h}} t^{-\frac{\overline{F_s}}{h}}dt \\
&=& \frac{2\mathrm{Re}F_s-h}{h}\int_1^{\infty} \frac{\overline{F_s}}{t}(1 + K^2) h\partial_t \hat{v}(t,\xi) r^{-\frac{F_s}{h}} t^{-\frac{\overline{F_s}}{h}}dt \\
&=& \frac{2\mathrm{Re}F_s-h}{h}\int_1^{\infty} \left( \frac{\overline{F_s}}{t} \right)^2(1 + K^2) \hat{v}(t,\xi) r^{-\frac{F_s}{h}} t^{-\frac{\overline{F_s}}{h}}dt \\
& & +\frac{2\mathrm{Re}F_s-h}{h}\int_1^{\infty} h\frac{\overline{F_s}}{t^2}(1 + K^2) \hat{v}(t,\xi) r^{-\frac{F_s}{h}} t^{-\frac{\overline{F_s}}{h}}dt. \\
\end{eqnarray*}
There are no boundary terms from the integration by parts, because $w$ is supported away from $r = 1$, and hence $w_s$ and $v$ are as well.  Integrating by parts in the second term gives
\begin{eqnarray*}
& & \frac{2\mathrm{Re}F_s-h}{h}\int_1^{\infty} \frac{2}{t}(1+iK\xi_n)h\partial_t \hat{v}(t,\xi) r^{-\frac{F_s}{h}} t^{-\frac{\overline{F_s}}{h}}dt \\
&=& \frac{2\mathrm{Re}F_s-h}{h}\int_1^{\infty} \frac{2\overline{F_{sk}}}{t^2}( 1 + iK\xi_n) \hat{v}(t,\xi) r^{-\frac{F_s}{h}} t^{-\frac{\overline{F_s}}{h}}dt \\ 
& & + \frac{2\mathrm{Re}F_s-h}{h}\int_1^{\infty} \frac{2h}{t^2}( 1 + iK\xi_n) \hat{v}(t,\xi) r^{-\frac{F_s}{h}} t^{-\frac{\overline{F_s}}{h}}dt \\ 
\end{eqnarray*}
Therefore we have 
\begin{eqnarray*}        
\hat{g} &=& \frac{2\mathrm{Re}F_s-h}{h}\int_1^{\infty} \frac{1}{t^2}((1+K^2)\overline{F_s}^2 -2(1+iK\xi_n)\overline{F_s}+1-|\xi|^2)\hat{v}(t,\xi) r^{-\frac{F_s}{h}} t^{-\frac{\overline{F_s}}{h}}dt \\
    & & +\frac{2\mathrm{Re}F_s-h}{h}\int_1^{\infty} h\frac{\overline{F_s}}{t^2}(1 + K^2) \hat{v}(t,\xi) r^{-\frac{F_s}{h}} t^{-\frac{\overline{F_s}}{h}}dt. \\
    & & -\frac{2\mathrm{Re}F_s-h}{h}\int_1^{\infty} \frac{2h}{t^2}(1+iK\xi_n) \hat{v}(t,\xi) r^{-\frac{F_s}{h}} t^{-\frac{\overline{F_s}}{h}}dt \\ 
    & & + \frac{2\mathrm{Re}F_s-h}{h}\int_1^{\infty} \mathcal{F}((|\g_f|^2-K^2) h^2\partial_t^2 v)(t,\xi) r^{-\frac{F_s}{h}} t^{-\frac{\overline{F_s}}{h}}dt \\
    & & -\frac{2\mathrm{Re}F_s-h}{h}\int_1^{\infty} \mathcal{F} \left( \frac{2}{t}\frac{h}{\e}\log(tf(\th))h\partial_t v \right)(t,\xi) r^{-\frac{F_s}{h}} t^{-\frac{\overline{F_s}}{h}} dt \\
    & & - \frac{2\mathrm{Re}F_s-h}{h}\int_1^{\infty} \mathcal{F}\left( \frac{2}{t}(\b_f-Ke_n)\cdot h\grad_{\th}h\partial_t v \right)(t,\xi) r^{-\frac{F_s}{h}} t^{-\frac{\overline{F_s}}{h}}dt \\
    & & + \frac{2\mathrm{Re}F_s-h}{h}\int_1^{\infty} \mathcal{F} \left( \frac{1}{t^2} \left( (2\frac{h}{\e}\log(tf(\th))+\frac{h^2}{\e^2}\log^2(tf(\th)) \right) v \right)(t,\xi) r^{-\frac{F_s}{h}} t^{-\frac{\overline{F_s}}{h}}dt. \\
    & & + \frac{2\mathrm{Re}F_s-h}{h}\int_1^{\infty} \frac{1}{t^2}\mathcal{F}(h^2(L_{S^n} -\Lap_{\th})v)(t,\xi) r^{-\frac{F_s}{h}} t^{-\frac{\overline{F_s}}{h}}dt \\
\end{eqnarray*}
Applying the same reasoning as in Lemma \ref{smallJHneg}, 
\begin{eqnarray*}
\|g\|^2_{L^2(\Rniip)} &\leq&  h^{-n} \left\| \frac{1}{r^2}((1+K^2)\overline{F_s}^2 -2(1+iK\xi_n)\overline{F_s}+1-|\xi|^2) \hat{v}(r,\xi) \right\|^2_{L^2(\Rniip)} \\
                     & & + h^{-n} \left\| h\frac{\overline{F_s}}{r^2}(1 + K^2) \hat{v}(r,\xi) \right\|^2_{L^2(\Rniip)}\\
                     & & +  \left\| \frac{2h}{r^2}( 1+Kh\partial_{\th_n}) v(r,\th) \right\|^2_{L^2(\Rniip)} \\
                     & & + \|(|\g_f|^2-K^2) h^2\partial_r^2 v\|^2_{L^2(\Rniip)} + \left\| \frac{2}{r}\frac{h}{\e}\log(rf(\th))h\partial_r v \right\|^2_{L^2(\Rniip)} \\
                     & & + \left\| \frac{2}{r}(\b_f - Ke_n)\cdot h\grad_{\th}h\partial_r v \right\|^2_{L^2(\Rniip)} \\
                     & & + \left\| \frac{1}{r^2}\left( 2\frac{h}{\e}\log(rf(\th))+\frac{h^2}{\e^2}\log^2(rf(\th)) \right) v \right\|^2_{L^2(\Rniip)} \\
                     & & + \|h^2\frac{1}{r^2}(L_{S^n} -\Lap_{\th})v\|^2_{L^2(\Rniip)} \\
                     &\lesssim& h^{-n}\| ((1+K^2)\overline{F_s}^2 -2(1+iK\xi_n)\overline{F_s} + 1 -|\xi|^2) \hat{v}(r,\xi)\|^2_{L^2(\Rniip)} \\
                     & & + h^{-n} \| h\overline{F_s}(1 + K^2) \hat{v}(r,\xi)\|^2_{L^2(\Rniip)}\\
                     & & + h^2 \left\| v \right\|^2_{H^1(\Rniip)} + C_{\mu}^2 \|h^2\partial_r^2 v\|^2_{L^2(\Rniip)}\\
                     & & + \frac{h^2}{\e^2}\| h\partial_r v\|^2_{L^2(\Rniip)} + C_{\mu}^2 \| h\grad_{\th}h\partial_r v\|^2_{L^2(\Rniip)} \\
                     & & + \frac{h^2}{\e^2}\| v \|^2_{L^2(\Rniip)} + C_{\mu}^2 \|v\|_{H^2(\Rniip)}^2 \\
\end{eqnarray*}
where $C_{\mu}$ goes to zero as $\mu$ does.

Now $F_s(\xi)$ is designed so that $\overline{F_s(\xi)}$ is very nearly a solution to $(1 + K^2)X^2 -2(1+iK\xi_n)X + 1 - |\xi|^2 = 0$ when $\hat{w}_s \neq 0$ and hence when $\hat{v} \neq 0$.  More precisely, 
\[
|(1 + K^2)\overline{F_s(\xi)}^2 -2(1+iK\xi_n)\overline{F_s(\xi)} + 1 -|\xi|^2| \lesssim \d (|F_s(\xi)| + |\xi_n|) \lesssim \d|F_s(\xi)|, 
\]
so
\begin{eqnarray*}
\|g\|^2_{L^2(\Rniip)} &\lesssim& h^{-n} \| \d|F_s(\xi)| \hat{v}(r,\xi)\|^2_{L^2(\Rniip)}\\
                     & &        + h^{-n} \| h\overline{F_s}(1 + K^2) \hat{v}(r,\xi) \|^2_{L^2(\Rniip)} \\
                     & & + C_{\mu}^2 \|v\|_{H^2(\Rniip)}^2 \\
                     &\lesssim& \d^2 \| v \|^2_{H^1(\Rniip)} + h^2 \| v \|^2_{H^1(\Rniip)} + C_{\mu}^2 \|v\|_{H^2(\Rniip)}^2 \\
                     &\lesssim& (\d^2 + C_{\mu}^2) \| v\|^2_{H^2(\Rniip)} \\
\end{eqnarray*}

This gives an estimate for $g$ in terms of $v$.  However, we want the estimate to be in terms of $u$.  We have $u = \Lphets v$, so 
\begin{eqnarray*}
\|u\|^2_{L^2(\Rniip)} &=& \|\Lphets v \|^2_{L^2(\Rniip)} \\
                      &\geq& \left\| ((1 + K^2)h^2 \partial_r^2 -\frac{2}{r}(1+Kh\partial_{\th_n})h\partial_r +\frac{1}{r^2}(1 +h^2\Lap_{\th}) v \right\|^2_{L^2(\Rniip)} \\
                     & & - \| (|\g_f|^2 - K^2)h^2\partial_r^2 v \|^2_{L^2(\Rniip)} \\
                     & & - \left\| \frac{2}{r}\frac{h}{\e}\log(rf(\th))h\partial_r v \right\|^2_{L^2(\Rniip)} \\
                     & & - \left\| \frac{2}{r}(\b_f - Ke_n)\cdot h\grad_{\th}h\partial_r v \right\|^2_{L^2(\Rniip)} \\
                     & & - \left\| \frac{1}{r^2}((2\frac{h}{\e}\log(rf(\th))+\frac{h^2}{\e^2}\log^2(rf(\th))) v \right\|^2_{L^2(\Rniip)} \\
                     & & - \left\| \frac{h^2}{r^2}(L_{S^n} - \Lap_{\th})v \right\|^2_{L^2(\Rniip)} \\
           &\gtrsim& \left\|\left( (1+K^2)h^2 \partial_r^2 -\frac{2}{r}(1+Kh\partial_{\th_n})h\partial_r +\frac{1}{r^2}(1 +h^2\Lap_{\th}\right)v \right\|^2_{L^2(\Rniip)} \\
                     & & - C_{\mu}^2 \| v \|^2_{H^2(\Rniip)} \\
\end{eqnarray*}
Writing the last expression in terms of $\hat{v}$, we get
\[                    
h^{-n}\left\| \left((1+K^2)h^2 \partial_r^2 -\frac{2}{r}(1+iK\xi_n)h\partial_r +\frac{1}{r^2}(1-|\xi|^2) \right)\hat{v}(r,\xi) \right\|^2_{L^2(\Rniip)} - C_{\mu}^2 \| v \|^2_{H^2(\Rniip)}
\]

Now $\hat{v}(r,\xi) = \mathcal{F}(J^{-1}_s P w)(r,\xi)$ is only non-zero for $\xi$ such that $|\xi|^2 \leq \half + \half \frac{|K|^2}{1+|K|^2} < 1$.  The operator 
\[
(1 + K^2)h^2 \partial_r^2 - \frac{2}{r}(1+iK\xi_n)h\partial_r +\frac{1}{r^2}(1-|\xi|^2)
\]
coincides, for $r > 1$, with a differential operator in $r$ of the form
\[
(1 + K^2)h^2 \partial_r^2 - 2\omega(1+iK\xi_n)h\partial_r +\omega^2(1-|\xi|^2)
\]
where $\omega$ is a smooth function that coincides with $\frac{1}{r}$ for $r>1$.  This is second order elliptic for each $|\xi|$ such that $\hat{v}(r,\xi)$ is nonzero, and its symbol (in $r$) is bounded below, therefore
\begin{eqnarray*}
& & \int_1^{\infty} |((1+K^2)h^2 \partial_r^2 - \frac{2}{r}(1+iK\xi_n)h\partial_r +\frac{1}{r^2}(1-|\xi|^2))\hat{v}(r,\xi)|^2 dr \\
&\simeq& \int_1^{\infty} |(1 - h^2\partial_r^2)\hat{v}(r,\xi) |^2 dr \\
&\simeq& \int_1^{\infty} |(1 - h^2\partial_r^2 + |\xi|^2)\hat{v}(r,\xi)|^2 dr \\
&\simeq& \int_1^{\infty} |\mathcal{F}((1 - h^2\Lap)v)(r,\xi)|^2 dr.
\end{eqnarray*}
Then
\begin{eqnarray*}
& &  h^{-n}\|((1 + K^2)h^2 \partial_r^2 - \frac{2}{r}(1+iK\xi_n)h\partial_r +\frac{1}{r^2}(1-|\xi|^2))\hat{v}(r,\xi)\|_{L^2(\Rniip)}^2  \\
&\simeq& \| v \|^2_{H^2(\Rniip)}
\end{eqnarray*}
and so
\begin{eqnarray*}
\|u\|^2_{L^2(\Rniip)} &\gtrsim& \|v\|^2_{H^2(\Rniip)} - C_{\mu}^2 \| v \|^2_{H^2(\Rniip)} \\
                     &\gtrsim& \|v\|^2_{H^2(\Rniip)} \\
\end{eqnarray*}
for $\mu$ small enough.  

Plugging this into the inequality for $g$ gives
\[
\|g\|^2_{L^2(\Rniip)} \lesssim (\d^2 + C_{\mu}^2) \| u\|^2_{L^2(\Rniip)}.
\]
Taking $\mu$ and $\d$ small enough means
\[
\|g\|^2_{L^2(\Rniip)} \leq \half \| u\|^2_{L^2(\Rniip)}
\]

Combining this with \eqref{firstpart} now gives
\[
\frac{h}{\sqrt{\e}} \|w_s \|_{L^2(\Rniip)} \lesssim  \|J_{s}\Lphets J^{-1}_{s} w_s \|_{H^{-1}_r(\Rniip)}+ O(h^{\infty})\|w \|_{L^2(\Rniip)}.
\]
Now using the first part of Lemma \ref{smallJprops} gives
\begin{eqnarray*}
\frac{h}{\sqrt{\e}}\|w_s\|_{L^2(\Rniip)} &\lesssim& \|\Lphets J_{s} J^{-1}_{s} w_s \|_{H^{-1}_r(\Rniip)}+ C_{\d}h\|r J^{-1}_{s} w_s\|_{H^1(\Rniip)} \\
                                         & &        + O(h^{\infty})\|w \|_{L^2(\Rniip)}\\
                                         &\lesssim& \|\Lphets w_s \|_{H^{-1}_r(\Rniip)}+ C_{\d}h\|r J^{-1}_{s} w_s\|_{H^1(\Rniip)} \\
                                         & &        + O(h^{\infty})\|w \|_{L^2(\Rniip)}\\
\end{eqnarray*}
$\Lphets w_s$ is supported in the $r$ direction only for those $r$ which can come from $\Omt_2$, since $w_s$ is.  Therefore the $H^{-1}_r$ and $H^{-1}$ norms are comparable, and so
\begin{eqnarray*}
\frac{h}{\sqrt{\e}}\|w_s\|_{L^2(\Rniip)} &\lesssim& \|\Lphets w_s \|_{H^{-1}(\Rniip)}+ C_{\d}h\|r J^{-1}_{s} w_s\|_{H^1(\Rniip)} \\
                                         & &        + O(h^{\infty})\|w \|_{L^2(\Rniip)}\\
\end{eqnarray*}

Meanwhile, 
\[
\widehat{J^{-1}_{s} w_s}(r,\xi) = \frac{1}{h} \int_1^r \hat{w}_s(t,\xi) \left( \frac{t}{r} \right)^{\frac{F_s(\xi)}{h}} dt,
\]
and $\hat{w}_s(t,\xi)$ is supported only for $1 \leq t \leq C$ for some $C$ depending on $\s(\Omt_2)$.  Therefore for $r > 4C$, 
\[
|\widehat{J^{-1}_{s}w_s}(r,\xi)| \leq \left| \frac{1}{h}\int_1^C \hat{w}_s(t,\xi) \left( \frac{t}{2C}\right)^{\frac{F_s}{h}}dt\right| \left| \half \right|^{\mathrm{Re} \frac{F_s}{h}} \left| \frac{4C}{r} \right|^{\mathrm{Re} \frac{F_s}{h}},
\]
so
\[
|\widehat{J^{-1}_{s}w_s}(r,\xi)|^2 \lesssim \int_1^C |\hat{w}(t,\xi)|^2 dt \left| \half \right|^{\mathrm{Re} \frac{2F_s}{h}} \left| \frac{4C}{r} \right|^{\mathrm{Re} \frac{2F_s}{h}}
\]
Therefore
\[
\|rJ_s^{-1}w_s\|_{L^2(\Rniip)} \lesssim \|r J_s^{-1}w_s\|_{L^2(1 < r < 4C)} + O(h^{\infty})\|w_s\|_{L^2(\Rniip)}.
\]
Similar calculations for derivatives of $J_s^{-1}w$ give
\[
\|rJ_s^{-1}w_s\|_{H^1(\Rniip)} \lesssim \|r J_s^{-1}w_s\|_{H^1(1 < r < 4C)} + O(h^{\infty})\|w_s\|_{L^2(\Rniip)},
\]
so
\[
\|r J_s^{-1}w_s\|_{H^1(\Rniip)} \lesssim \|J_s^{-1}w_s\|_{H^1_r(\Rniip)} + O(h^{\infty})\|w_s\|_{L^2(\Rniip)}.
\]

Therefore
\begin{eqnarray*}
\frac{h}{\sqrt{\e}}\|w_s\|_{L^2(\Rniip)} &\lesssim& \|\Lphets w_s \|_{H^{-1}(\Rniip)}+ C_{\d}h\|J^{-1}_{s} w_s\|_{H^1_r(\Rniip)} \\
                                         & &        + O(h^{\infty})\|w \|_{L^2(\Rniip)}\\
\end{eqnarray*}
Applying the boundedness results gives
\begin{eqnarray*}
\frac{h}{\sqrt{\e}}\|w_s\|_{L^2(\Rniip)} &\lesssim& \|\Lphets w_s \|_{H^{-1}(\Rniip)}+ C_{\d}h\|w_s\|_{L^2(\Rniip)} \\
                                         & &        + O(h^{\infty})\|w \|_{L^2(\Rniip)}.\\
\end{eqnarray*}
For small enough $\e$, the second last term can be absorbed into the left side to give 
\[
\frac{h}{\sqrt{\e}}\|w_s\|_{L^2(\Rniip)} \lesssim \|\Lphets w_s \|_{H^{-1}(\Rniip)}+ O(h^{\infty})\|w \|_{L^2(\Rniip)}.
\]
This finishes the proof of Lemma \ref{smallCarl}.

\section{The Large Frequency Case}

Consider again the function $F:\R^n \rightarrow \mathbb{C}$ given by
\[
\overline{F(\xi)} = \frac{1}{1+K^2}\left( 1+iK\xi_n + \sqrt{2iK\xi_n-(K\xi_n)^2 +(1+K^2)|\xi|^2 -|K|^2} \right),
\]
but this time take the branch of the square root which has nonnegative real part.  Now $F$ is smooth except where
\[
\tau_{K}(\xi) = 2iK\xi_n-(K\xi_n)^2 +(1 + |K|^2)|\xi|^2 -|K|^2
\]
lies on the nonpositive real axis.  This happens when $\xi_n =0$ and 
\[
|\xi|^2 \leq \frac{|K|^2}{1 + |K|^2}.
\]
Therefore on the support of $1-\rho(\xi)$, $F$ is smooth.  Moreover, since the real part of the square root is nonnegative, both $|F|$ and the real part of $F$ and are bounded below by $\frac{1}{1+K^2}$.  Therefore we can pick a smooth function $F_{\ell}$ such that $F_{\ell}(\xi) = F(\xi)$ on the support of $1-\rho(\xi)$, and $\mathrm{Re}F_{\ell}, |F_{\ell}| > \half \frac{1}{1+K^2}$.  Note that for large $|\xi|$, we have $\mathrm{Re}F(\xi), |F(\xi)| \simeq 1 + |\xi|$, $F_{\ell}$ can satisfy
\[
\mathrm{Re}F_{\ell}(\xi), |F_{\ell}(\xi)| \simeq 1 + |\xi|
\]

In fact, if $\frac{K^2}{1+K^2} < r_0 < r_1$ and $0 < \d_0 < \d_1$, we can arrange for $F_{\ell} = F$ and $F_{\ell}$ to be smooth for $|\xi|^2 \geq r_0$ and $\xi_n \geq \d_0$.  

\[
\widehat{J_{\ell} u}(r, \xi) = \left( \frac{F_{\ell}(\xi)}{r} + h\partial_r \right) \hat{u}(r, \xi).
\]
This operator has adjoint $J_{s}^{*}$ given by
\[
\widehat{J_{\ell}^{*} u}(r, \xi) = \left( \frac{\overline{F_{\ell}(\xi)}}{r} - h\partial_r \right) \hat{u}(r, \xi).
\]
These operators have (right) inverses defined by 

\[
\widehat{J_{\ell}^{-1}u} (r,\xi) = h^{-1} \int_1^{r} \hat{u}(t,\xi) \left( \frac{t}{r} \right)^{\frac{F_{\ell}(\xi)}{h}} dt 
\]
and
\[
\widehat{J_{\ell}^{*-1}u} (r,\xi) = h^{-1} \int_r^{\infty} \hat{u}(t,\xi) \left( \frac{r}{t} \right)^{\frac{\overline{F_{\ell}(\xi)}}{h}} dt.
\]
Each of these is well defined for functions in $\S(\Rniip)$.

We have the following lemmas.
\begin{lemma}\label{largeJbounds}
$J_{\ell}, J_{\ell}^{*},J_{\ell}^{-1},$ and $J_{\ell}^{*-1}$ extend as bounded maps
\[
J_{\ell}, J_{\ell}^{*}: H^1_r(\Rniip) \rightarrow L^2(\Rniip)
\]
and
\[
J_{\ell}^{-1}, J_{\ell}^{*-1}: L^2(\Rniip) \rightarrow H^1_r(\Rniip).
\]
Moreover, the extensions of $J_{\ell}^{*}$ and $J^{*-1}_{\ell}$ are isomorphisms.  
\end{lemma}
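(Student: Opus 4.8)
The plan is to mirror the proof of Lemma~\ref{smallJbounds} essentially verbatim, replacing $F_s$ by $F_\ell$ throughout. The only properties of $F_s$ that were used there are that it is smooth with bounded derivatives, that $\mathrm{Re}\,F_s$ and $|F_s|$ are bounded below by a positive constant uniformly in $\xi$, and that $\mathrm{Re}\,F_s, |F_s| \simeq 1 + |\xi|$ for large $|\xi|$. By the construction in this section $F_\ell$ has exactly these same properties: moving the branch cut of the square root to the nonpositive real axis keeps $F = F_\ell$ smooth on the support of $1 - \rho$ (there $\tau_K(\xi)$ avoids the nonpositive reals, since $|\xi|^2 \ge r_1 > |K|^2/(1+|K|^2)$), while $\mathrm{Re}\,F_\ell, |F_\ell| > \frac{1}{2(1+K^2)}$ everywhere with growth $\simeq 1 + |\xi|$ at infinity.

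First I would prove the $H^1_r(\Rniip) \to L^2(\Rniip)$ bounds for $J_\ell$ and $J_\ell^{*}$. Writing $\widehat{J_\ell u} = (F_\ell(\xi)/r + h\partial_r)\hat u$ and using the triangle inequality together with $|F_\ell(\xi)| \lesssim 1 + |\xi|$, one gets $\|J_\ell u\|_{L^2(\Rniip)}^2 \lesssim \|u/r\|_{L^2}^2 + \|(h/r)\grad_{\th} u\|_{L^2}^2 + \|h\partial_r u\|_{L^2}^2 = \|u\|_{H^1_r(\Rniip)}^2$ for $u \in \S(\Rniip)$, and the argument for $J_\ell^{*}$ is identical; density of $\S(\Rniip)$ in $H^1_r(\Rniip)$ then gives the claimed extensions.

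Next I would establish the $L^2(\Rniip) \to H^1_r(\Rniip)$ bounds for $J_\ell^{-1}$ and $J_\ell^{*-1}$ by repeating the Minkowski-inequality computation from Lemma~\ref{smallJbounds}: after the substitution $t \mapsto rt$ one bounds $\|r^{-1}\widehat{J_\ell^{-1}u}\|$, $\|(\xi/r)\widehat{J_\ell^{-1}u}\|$, and --- using $h\partial_r \widehat{J_\ell^{-1}u} = -(F_\ell(\xi)/r)\widehat{J_\ell^{-1}u} + \hat u$ --- also $\|h\partial_r \widehat{J_\ell^{-1}u}\|$, each in $L^2$ in $r$, by $\|\hat u\|_{L^2}$, using that $\mathrm{Re}\,F_\ell(\xi) + h/2 \gtrsim 1 + |\xi|$; this yields $\|J_\ell^{-1}u\|_{H^1_r(\Rniip)} \lesssim \|u\|_{L^2(\Rniip)}$, and similarly for $J_\ell^{*-1}$. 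Finally, for the isomorphism claim I would check, exactly as before, that $J_\ell^{*} J_\ell^{*-1} u = u$ by a direct Fourier-side computation and $J_\ell^{*-1} J_\ell^{*} u = u$ by integration by parts in $t$ --- with no boundary term at $t = \infty$ because $\mathrm{Re}\,F_\ell > 0$ forces $(r/t)^{\overline{F_\ell}/h} \to 0$ --- valid for $u \in \S(\Rniip)$; together with the boundedness already shown and the density of $\S(\Rniip)$, these two identities give that $J_\ell^{*-1}$ is injective with closed range and has $H^1_r(\Rniip)$ in its range, and that $J_\ell^{*}$ is injective with closed range and has $L^2(\Rniip)$ in its range, so both are isomorphisms. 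As in the small-frequency case, $J_\ell^{-1}J_\ell \neq \mathrm{id}$ because of a boundary term at $r = 1$, so no isomorphism statement is made for $J_\ell$ and $J_\ell^{-1}$.

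I expect no genuinely new obstacle here; the entire point is that the symbol $F_\ell$, despite being built from the opposite branch of the square root, supplies the same three analytic properties that drove the small-frequency estimates. The one place warranting a little care is the verification that $\tau_K$ stays off the branch cut on the support of $1 - \rho$, which is exactly where the choice of the cutoff thresholds in the definition of $\rho$ (and, for the refinement, of $r_0,\d_0$) is used.
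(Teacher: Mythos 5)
Your proposal is correct and matches the paper exactly: the paper simply states that the proofs of the large-frequency lemmas are identical to those in the small-frequency case, and your verification that $F_\ell$ supplies the same three analytic properties (smoothness on the relevant support, lower bounds on $\mathrm{Re}\,F_\ell$ and $|F_\ell|$, and growth $\simeq 1+|\xi|$) is precisely what justifies that claim.
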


\begin{lemma}\label{largeJprops}

i) Suppose $w \in \S(\Rniip) $.  Then if $Q$ is a second order semiclassical differential operator with bounded coefficients, then
\[
\|(J_{\ell} Q - Q J_{\ell})w\|_{H^{-1}_r(\Rniip)} \lesssim hC_{\d}\|rw\|_{H^1(\Rniip)}
\] 

ii) Suppose $w \in \S(\Rniip) $.  Let $\chi \in \S(\Rniip)$.  Then
\[
\| J_{\ell} \chi J^{-1}_{\ell}w \|_{L^2(\Rniip)} \gtrsim \|\chi w\|_{L^2(\Rniip)} - hC_{\d}\| rw\|_{L^2(\Rniip)}
\]

\end{lemma}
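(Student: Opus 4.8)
The plan is to observe that Lemma \ref{largeJprops} is proved by exactly the argument used for Lemma \ref{smallJprops}, with the small-frequency objects $J_s$, $J_s^{-1}$, $F_s$, $T$ replaced throughout by their large-frequency counterparts $J_\ell$, $J_\ell^{-1}$, $F_\ell$, $T_\ell$, and with Lemma \ref{smallJbounds} replaced by Lemma \ref{largeJbounds}. So the real task is to confirm that the structural ingredients of that earlier proof survive the substitution; none of the analytic content changes.

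The one point I would check carefully is the symbol-class statement. Since $F_\ell$ was constructed at the start of Section 5 to be smooth on all of $\Rn$, with $\mathrm{Re}\,F_\ell,\ |F_\ell| > \half\frac{1}{1+K^2}$ everywhere and $\mathrm{Re}\,F_\ell(\xi),\ |F_\ell(\xi)| \simeq 1 + |\xi|$ for large $|\xi|$, the function $F_\ell$ is an elliptic classical semiclassical symbol of order $1$ on $\Rn$. Hence $J_\ell = h\partial_r + \tfrac1r T_\ell$, where $T_\ell = \mathrm{Op}(F_\ell)$ is an order-$1$ semiclassical pseudodifferential operator on $\Rn$ which commutes with $\partial_r$ — precisely the role played by $T$ in the proof of Lemma \ref{smallJprops}. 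This is where the construction of $F_\ell$ enters, and it is the only step I expect to require care; everything after it is bookkeeping with commutators and the boundedness lemma for pseudodifferential operators on $\Rn$ acting on functions on $\RnI$.

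For part i), I would write $Q = A h^2\partial_r^2 + B h\partial_r + C$ with $A,B,C$ differential operators on $\Rn$ of orders $0,1,2$ for each fixed $r$ (bounds uniform in $r$), expand $[h\partial_r + \tfrac1r T_\ell,\,Q]$, and use that $r[h\partial_r,Q] = hEr + h^2E'$ for semiclassical differential operators $E,E'$ of orders $2,1$, while $[T_\ell,A] = hE_0$, $[T_\ell,B] = hE_1$, $[T_\ell,C] = hE_2$ with $E_j$ order-$j$ semiclassical pseudodifferential operators on $\Rn$ (the commutator of an order-$1$ operator on $\Rn$ with an order-$j$ operator being $h$ times an order-$j$ operator). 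Then the boundedness lemma for such operators, together with the boundedness of multiplication by $1/r$ from $H^1_{r,0}(\Rniip)$ to $H^1_0(\Rniip)$ — hence by duality from $H^{-1}(\Rniip)$ to $H^{-1}_r(\Rniip)$ — and the bound $\tfrac1r \le 1$ on $\Rniip$, collects every error term into $hC_{\d}\|rw\|_{H^1(\Rniip)}$, the constant $C_{\d}$ merely tracking the fixed bounds on the symbol $F_\ell$, just as in Lemma \ref{smallJprops}.

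For part ii), I would write $J_\ell \x J_\ell^{-1} w = (h\partial_r + \tfrac{T_\ell}{r})\x J_\ell^{-1} w$, commute $\x$ to the left at the cost of the harmless term $h(\partial_r \x) J_\ell^{-1} w$ and the term $\tfrac1r[T_\ell,\x] J_\ell^{-1} w = \tfrac{h}{r} E\, J_\ell^{-1} w$ with $E$ of order $0$, invoke the identity $J_\ell J_\ell^{-1} w = w$ for $w \in \S(\Rniip)$ (the boundary term at $r=1$ from differentiating the lower limit cancels, exactly as for $J_s$), and use the mapping bound $\|J_\ell^{-1} w\|_{L^2(\Rniip)} \lesssim \|r w\|_{L^2(\Rniip)}$ implicit in Lemma \ref{largeJbounds}, to obtain $\|J_\ell \x J_\ell^{-1} w\|_{L^2(\Rniip)} \gtrsim \|\x w\|_{L^2(\Rniip)} - hC_{\d}\|r w\|_{L^2(\Rniip)}$. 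I do not anticipate a genuine obstacle beyond the symbol-class verification in the second paragraph; granted that, the proof is a transcription of the one for Lemma \ref{smallJprops}.
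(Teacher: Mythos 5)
Your proposal is correct and follows exactly the route the paper takes: the paper's entire proof of Lemma \ref{largeJprops} is the remark that the arguments are identical to those for Lemma \ref{smallJprops}, with $F_s$, $J_s$ replaced by $F_\ell$, $J_\ell$, and your verification that $F_\ell$ is an elliptic order-one symbol (so that $J_\ell = h\partial_r + \tfrac1r T_\ell$ has the same structure as $J_s$) is precisely the point that makes the substitution legitimate. No gaps.
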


\begin{lemma}\label{largeJHneg}

Suppose $u \in \S(\Rniip)$.  If $g$ is defined by 
\[
\hat{g}(r, \xi) = \frac{2\mathrm{Re}F_{\ell}(\xi)-h}{h}\int_1^{\infty} \hat{u}(t,\xi) r^{-\frac{F_{\ell}(\xi)}{h}} t^{-\frac{\overline{F_{\ell}(\xi)}}{h}}dt ,
\]
then
\[
\|J_{\ell}u\|_{H^{-1}_r(\Rniip)} \simeq \|u - g\|_{L^2(\Rniip)}
\] 
\end{lemma}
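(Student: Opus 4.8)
The plan is to mimic the proof of Lemma~\ref{smallJHneg} essentially verbatim, replacing $F_s$ by $F_\ell$ and $J_s$ by $J_\ell$ throughout; I will only highlight the handful of points where one must check that the corresponding property of $F_\ell$ is still in force.

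First I would fix $u\in\S(\Rniip)$, define $g$ by the stated formula, and show $g\in L^2(\Rniip)$ with $\|g\|_{L^2(\Rniip)}\le\|u\|_{L^2(\Rniip)}$. Exactly as in Lemma~\ref{smallJHneg}, this follows by Cauchy--Schwarz in the $t$-variable together with the elementary identity $\int_1^\infty t^{-2\mathrm{Re}F_\ell(\xi)/h}\,dt=\frac{h}{2\mathrm{Re}F_\ell(\xi)-h}$ and the exact cancellation $\left(\frac{2\mathrm{Re}F_\ell-h}{h}\right)^2\left(\frac{h}{2\mathrm{Re}F_\ell-h}\right)^2=1$. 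The only input needed is that $2\mathrm{Re}F_\ell(\xi)-h>0$ uniformly in $\xi$; this holds for all small $h$ because $F_\ell$ was constructed so that $\mathrm{Re}F_\ell>\half\frac1{1+K^2}$ everywhere. I would then observe that $\widehat{J_\ell g}=\left(\frac{F_\ell}{r}+h\partial_r\right)\hat g=0$, since $\hat g(r,\xi)$ equals a function of $\xi$ times $r^{-F_\ell(\xi)/h}$, which lies in the kernel of $\frac{F_\ell}{r}+h\partial_r$.

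Next I would run the duality argument. Because $J_\ell g=0$, for $w\in H^1_{r,0}(\Rniip)$ one has $(J_\ell u,w)=(J_\ell(u-g),w)=(u-g,J_\ell^{*}w)$, so
\[
\|J_\ell u\|_{H^{-1}_r(\Rniip)}=\sup_{w\in H^1_{r,0}(\Rniip),\,w\neq0}\frac{|(u-g,J_\ell^{*}w)|}{\|w\|_{H^1_r(\Rniip)}}.
\]
By Lemma~\ref{largeJbounds} the map $J_\ell^{*}:H^1_r(\Rniip)\to L^2(\Rniip)$ is an isomorphism, so $\|w\|_{H^1_r(\Rniip)}\simeq\|J_\ell^{*}w\|_{L^2(\Rniip)}$, and the supremum is comparable to $\sup|(u-g,J_\ell^{*}w)|/\|J_\ell^{*}w\|_{L^2(\Rniip)}$. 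Estimating the inner product by Cauchy--Schwarz then gives the upper bound $\|J_\ell u\|_{H^{-1}_r(\Rniip)}\lesssim\|u-g\|_{L^2(\Rniip)}$.

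For the matching lower bound I would test with $w=J_\ell^{*-1}(u-g)$, which lies in $H^1_r(\Rniip)$ by Lemma~\ref{largeJbounds}. The one thing to verify is that this $w$ has vanishing trace at $r=1$, i.e.\ $\widehat{J_\ell^{*-1}g}(1,\xi)=\widehat{J_\ell^{*-1}u}(1,\xi)$; this is the same Fubini computation as in Lemma~\ref{smallJHneg}, again collapsing the double integral via $\int_1^\infty a^{-2\mathrm{Re}F_\ell(\xi)/h}\,da=\frac{h}{2\mathrm{Re}F_\ell(\xi)-h}$. Hence $w\in H^1_{r,0}(\Rniip)$, and (assuming $u-g\neq0$, the case $u-g=0$ being trivial) inserting this $w$ into the supremum yields $\|J_\ell u\|_{H^{-1}_r(\Rniip)}\gtrsim\|u-g\|_{L^2(\Rniip)}$, completing the proof. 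I do not expect any genuinely new obstacle here: the argument is structurally identical to Lemma~\ref{smallJHneg}, and the only points requiring attention are that the positivity and uniform lower bound for $\mathrm{Re}F_\ell$ (replacing the corresponding facts about $\mathrm{Re}F_s$) were secured when $F_\ell$ was constructed at the start of this section, and that the isomorphism properties of $J_\ell^{*}$ and $J_\ell^{*-1}$ invoked above are precisely those supplied by Lemma~\ref{largeJbounds}.
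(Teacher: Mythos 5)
Your proposal is correct and coincides with the paper's own treatment: the paper simply states that the proofs of Lemmas \ref{largeJbounds}--\ref{largeJHneg} are identical to their small-frequency counterparts, and your argument is exactly that transcription, with the two genuinely load-bearing facts (the uniform lower bound $\mathrm{Re}\,F_{\ell} > \half\frac{1}{1+K^2}$ ensuring convergence of the $t$-integrals, and the isomorphism property of $J_{\ell}^{*}$ from Lemma \ref{largeJbounds}) correctly identified and verified.
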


The proofs of these lemmas are identical to the proofs of the equivalent lemmas in the small frequency case.

Now consider the Carleman estimate \eqref{flatCarl}.  By a similar argument as in the small frequency case, we get

\begin{equation}\label{largefirstpart}
\frac{h}{\sqrt{\e}} \|w_{\ell} \|_{L^2(\Rniip)} \lesssim \|\Lphets J^{-1}_{\ell} w_{\ell} \|_{L^2(\Rniip)}+ O(h^{\infty})\|w \|_{L^2(\Rniip)}.
\end{equation}

\noindent Again I want to combine this last inequality with Lemma \ref{largeJHneg} to get 
\[
\frac{h}{\sqrt{\e}} \|w_{\ell} \|_{L^2(\Rniip)} \lesssim \|J_{\ell}\Lphets J^{-1}_{\ell} w_{\ell} \|_{H^{-1}_r(\Rniip)} + O(h^{\infty})\|w \|_{L^2(\Rniip)}.
\]
To do this I need to show that if $u$ is of the form $u = \Lphets J^{-1}_{\ell} w_{\ell}$, then the function $g$ defined in Lemma \ref{largeJHneg} satisfies a bound like
\[
\|g\|_{L^2(\Rniip)} \leq \half \|u\|_{L^2(\Rniip)} + O(h)\|w_{\ell} \|_{L^2(\Rniip)}.
\] 
The approach used in the small frequency case does not work here, because $\Lphets$ is not at all elliptic from the point of view of $w_{\ell}$.  However, now $\Lphets$ can be factored into a composition of two operators, one of which has the desired properties.  

Let $\zeta(\xi)$ be a smooth cutoff function which is identically 1 on the set where $|\xi|^2 \geq r_1$ or $|\xi_n| \geq \d_1$, and vanishes if $|\xi|^2 \leq r_0$ or $|\xi_n| \leq \d_0$.  Let 
\[
G_{s} = (1- \zeta(\xi))F_{\ell}(\xi)
\]
and consider the symbols
\[
G_{\pm} = \zeta(\xi) \frac{\a+i\b_f \cdot \xi \pm \sqrt{(\a+i\b_f \cdot \xi)^2 - (1 + (\g_f)^2)(\a^2 - L_{S^n}(\th,\xi))}}{1+|\g_f|^2} + G_s(\xi)
\]
where $L_{S^n}(\th,\xi)$ represents the symbol of the differential operator $L_{S^n}$.  The square root represents the branch of the square root with nonnegative real part.  The argument of the square root lies on the nonpositive real axis only when $\b_f \cdot \xi = 0$ and
\[
L_{S^n} \leq \frac{\a^2 |\g_f|^2}{1 + |\g_f|^2}.
\]
For $\mu$ small enough, this cannot happen on the support of $\zeta$.  Therefore $G_{\pm}$ really are smooth, and hence they really are symbols of order 1 on $\Rn$.  

Now if $T_{a}$ is the operator associated to the symbol $a$,
\begin{eqnarray*}
& & (h\partial_r - \frac{1}{r}T_{G_{+}})(1 + |\g_f|^2)(h\partial_r - \frac{1}{r}T_{G_{-}}) \\
&=& (1 + |\g_f|^2) h^2\partial_r^2 - \frac{1}{r}(1 + |\g_f|^2) (T_{G_{+}} + T_{G_{-}})h\partial_r + \frac{1}{r^2}(1 + |\g_f|^2)T_{G_{+}G_{-}} + hE_1 \\
&=& (1 + |\g_f|^2) h^2\partial_r^2 - \frac{2}{r}(\a + \b_f \cdot h\grad_{\th})h\partial_r T_{\z} + \frac{1}{r^2}(\a^2 + h^2 L_{S^n})T_{\z^2} \\
& & - \frac{2}{r}(1 + |\g_f|^2)T_{G_s} + \frac{1}{r^2}(1 + |\g_f|^2)(T_{G_{+}G_s} + T_{G_{s}G_{-}}+T_{G_s^2}) + hE_1 \\
&=& (1 + |\g_f|^2) h^2\partial_r^2 - \frac{2}{r}(\a + \b_f \cdot h\grad_{\th})h\partial_r T_{\z} + \frac{1}{r^2}(\a^2 + h^2 L_{S^n})T_{\z^2} \\
& & - \frac{2}{r}(1 + |\g_f|^2)T_{G_s} + \frac{1}{r^2}(1 + |\g_f|^2)(T_{G_{+}}T_{G_s} + T_{G_{-}}T_{G_{s}}+T_{G_s}T_{G_s}) + hE_1 \\
\end{eqnarray*}
where $E_1$ is a operator (which changes from line to line as necessary) built of first order semiclassical pseudodifferential operators in $\Rn$ and $\partial_r$ derivatives which is bounded from $H^1(\Rniip)$ to $L^2(\Rniip)$. 

Now let $v = J^{-1}_{\ell} w_{\ell}$. Then
\begin{eqnarray*}
& & (h\partial_r - \frac{1}{r}T_{G_{+}})(1 + |\g_f|^2)(h\partial_r - \frac{1}{r}T_{G_{-}}) v\\
&=& (1 + |\g_f|^2) h^2\partial_r^2 v - \frac{2}{r}(\a + \b_f \cdot h\grad_{\th})h\partial_r T_{\z} v + \frac{1}{r^2}(\a^2 + h^2 L_{S^n})T_{\z^2} v \\
& & - \frac{2}{r}(1 + |\g_f|^2)T_{G_s}v + \frac{1}{r^2}(1 + |\g_f|^2)(T_{G_{+}} + T_{G_{-}}+T_{G_s})T_{G_{s}}v + hE_1v. \\
\end{eqnarray*}
Note that $\hat{w}_{\ell}(r,\xi)$ is only supported for $\xi$ in the support of $(1-\rho)$, and therefore $v = J^{-1}_{\ell} w_{\ell}$ is supported only for $\xi$ in the support of $(1-\rho)$.  Therefore   
\[
T_{\z}v = v. 
\]
since $\z \equiv 1$ on the support of $1 - \rho$.  Similarly, $T_{\z^2}v = v$.  In addition, 
\[
T_{G_s}v = 0,
\]
since $G_s$ is $0$ on the support of $1 - \rho$.  Therefore
\begin{eqnarray*}
& & (h\partial_r - \frac{1}{r}T_{G_{+}})(1 + |\g_f|^2)(h\partial_r - \frac{1}{r}T_{G_{-}}) v \\
&=& (1 + |\g_f|^2) h^2\partial_r^2 v - \frac{2}{r}(\a + \b_f \cdot h\grad_{\th})h\partial_r v + \frac{1}{r^2}(\a^2 + h^2 L_{S^n}) v +hE_1 v \\
&=& \Lphets v +hE_1 v\\
\end{eqnarray*}
where $E_1$ is bounded from $H^1(\Rniip)$ to $L^2(\Rniip)$.

Therefore
\[
\Lphets v = (h\partial_r-\frac{1}{r}T_{G_{+}})z + hE_1 v
\]
for some function $z$, given by 
\[
z = (1 + |\g_f|^2)(h\partial_r - \frac{1}{r}T_{G_{-}}) v. 
\]  
Then 
\begin{eqnarray*}
\hat{g}(r, \xi) &=& \frac{2\mathrm{Re}F_{\ell}-h}{h}\int_1^{\infty} \widehat{\Lphets v}(t,\xi) r^{-\frac{F_{\ell}}{h}} t^{-\frac{\overline{F_{\ell}}}{h}}dt \\
                &=& \frac{2\mathrm{Re}F_{\ell}-h}{h}\int_1^{\infty} \mathcal{F}\left( \left( h\partial_t-\frac{1}{t}T_{G_{+}} \right)z \right) (t,\xi) r^{-\frac{F_{\ell}}{h}} t^{-\frac{\overline{F_{\ell}}}{h}}dt \\
                & & + \frac{2\mathrm{Re}F_{\ell}-h}{h}\int_1^{\infty} h\widehat{E_1 v}(t,\xi) r^{-\frac{F_{\ell}}{h}} t^{-\frac{\overline{F_{\ell}}}{h}}dt \\
\end{eqnarray*}

Integrating by parts gives 
\begin{eqnarray*}
\hat{g}(r, \xi) &=&\frac{2\mathrm{Re}F_{\ell}-h}{h}\int_1^{\infty} \frac{1}{t} \mathcal{F}((T_{\overline{F_{\ell}}} - T_{G_+})z)(t,\xi) r^{-\frac{F_{\ell}}{h}} t^{-\frac{\overline{F_{\ell}}}{h}}dt \\
                & & + \frac{2\mathrm{Re}F_{\ell}-h}{h}\int_1^{\infty} h\widehat{E_1 v}(t,\xi) r^{-\frac{F_{\ell}}{h}} t^{-\frac{\overline{F_{\ell}}}{h}}dt \\
\end{eqnarray*}
There are no boundary terms because $z$ is supported away from $r = 1$.  Therefore by the reasoning used to prove Lemma \ref{largeJHneg}, 
\begin{eqnarray*}
\|g\|^2_{L^2(\Rniip)} &\leq& \|\frac{1}{r} (T_{\overline{F_{\ell}} - G_+})z\|^2_{L^2(\Rniip)} \\
                     & & + h^2\|E_1 v\|^2_{L^2(\Rniip)} \\
\end{eqnarray*}

We need an estimate for $\|\frac{1}{r} (T_{\overline{F_{\ell}} - G_+})z\|^2_{L^2(\Rniip)}$.  Examine the symbol $\overline{F_{\ell}} - G_{+}$.  
\begin{eqnarray*}
& & \overline{F_{\ell}} - G_{+} \\
&=& \overline{F_{\ell}(\xi)} \\
& & - \zeta \frac{\a+i\b_f \cdot \xi + \sqrt{(\a+i\b_f \cdot \xi)^2 - (1 + |\g_f|^2)(\a^2 + L_{S^n}(\th,\xi))}}{1+|\g_f|^2} \\
& & - (1- \zeta)\overline{F_{\ell}(\xi)} \\
&=& \z \left( \overline{F_{\ell}(\xi)} - \frac{\a+i\b_f \cdot \xi + \sqrt{(\a+i\b_f \cdot \xi)^2 -(1+|\g_f|^2)(\a^2+ L_{S^n}(\th,\xi))}}{1+|\g_f|^2} \right) \\
\end{eqnarray*}
On the support of $\zeta$, 
\[
\overline{F_{\ell}(\xi)} = \frac{1}{1+K^2}(1+iK\xi_n + \sqrt{2iK\xi_n-(K\xi_n)^2 +(1+K^2)|\xi|^2 -|K|^2}).  
\]
Therefore
\begin{eqnarray*}
& & \overline{F_{\ell}} - G_{+} \\
&=& \zeta \left( \frac{1+iK\xi_n + \sqrt{2iK\xi_n-(K\xi_n)^2 -(1+K^2)|\xi|^2 -|K|^2}}{1 + K^2} \right. \\
& & - \left. \frac{\a+i\b_f \cdot \xi + \sqrt{(\a+i\b_f \cdot \xi)^2 -(1+(\g_f)^2)(\a^2+ L_{S^n}(\th,\xi))}}{1+|\g_f|^2} \right) \\
&=& \zeta \left( \frac{1 +iK\xi_n}{1+K^2} - \frac{\a+i\b_f \cdot \xi}{1+|\g_f|^2}\right) \\
& & + \zeta \left( \frac{\sqrt{2iK\xi_n-(K\xi_n)^2-(1+K^2)|\xi|^2-|K|^2}}{1+K^2} \right. \\
& & \left. -\frac{\sqrt{(\a+i\b_f \cdot \xi)^2 -(1+(\g_f)^2)(\a^2+ L_{S^n}(\th,\xi))}}{1+|\g_f|^2}\right) \\
\end{eqnarray*}

Consider the first term.  
\begin{eqnarray*}
\frac{1+iK\xi_n}{1+K^2}-\frac{\a+i\b_f \cdot \xi}{1+|\g_f|^2} &=& \frac{(1+|\g_f|^2)(1+iK\xi_n)-(1+K^2)(\a+i\b_f \cdot \xi)}{(1+K^2)(1+|\g_f|^2)} \\
                                                              &=& \frac{(|\g_f|^2-K^2)(1+iK\xi_n)}{(1+K^2)(1+|\g_f|^2)} \\
                                                              & & + \frac{((1+K^2)((1-\a)+i(\b_f- Ke_n)\cdot \xi)}{(1+K^2)(1+|\g_f|^2)} \\
\end{eqnarray*}
The first order operators with symbols 
\[
\frac{(|\g_f|^2-K^2)(1+iK\xi_n)}{(1+K^2)(1+|\g_f|^2)}
\]
and
\[
\frac{((1+K^2)((1-\a)+i(\b_f- Ke_n)\cdot \xi)}{(1+K^2)(1+|\g_f|^2)}
\]
have bounds $\lesssim C_{\mu}$, because they involve multiplication by a function of $\th$ which is bounded by $C_{K}C_{\mu}$.  

Similarly, consider the first order operator with symbol
\begin{eqnarray*}
& & \zeta \left( \frac{\sqrt{2iK\xi_n-(K\xi_n)^2-(1+K^2)|\xi|^2-|K|^2}}{1+K^2} \right. \\
& & \left. -\frac{\sqrt{(\a+i\b_f \cdot \xi)^2 -(1+(\g_f)^2)(\a^2+ L_{S^n}(\th,\xi))}}{1+|\g_f|^2}\right) \\
\end{eqnarray*}
To fit everything horizontally on the page, denote 
\[
\tau_K := 2iK\xi_n-(K\xi_n)^2-(1+K^2)|\xi|^2-|K|^2
\]
and
\[
\tau_f := (\a+i\b_f \cdot \xi)^2 -(1+(\g_f)^2)(\a^2+ L_{S^n}(\th,\xi)).
\]
Then
\begin{eqnarray*}
& & \frac{\sqrt{\tau_K}}{1+K^2}-\frac{\sqrt{\tau_f}}{1+|\g_f|^2}\\
&=& \frac{(1+|\g_f|^2)\sqrt{\tau_K} - (1+K^2)\sqrt{\tau_f}}{(1+K^2)(1+|\g_f|^2)} \\
&=& \frac{(1+|\g_f|^2)^2 \tau_K - (1+K^2)^2 \tau_f}{(1+K^2)(1+|\g_f|^2)((1+|\g_f|^2)\sqrt{\tau_K} + (1+K^2)\sqrt{\tau_f})} \\
&=& (1+K^2)\frac{\tau_K-\tau_f}{(1+|\g_f|^2)((1+|\g_f|^2)\sqrt{\tau_K} + (1+K^2)\sqrt{\tau_f})} \\
& & + \frac{((1+|\g_f|^2)^2 - (1 +K^2)^2) \tau_K}{(1+K^2)(1+|\g_f|^2)((1+|\g_f|^2)\sqrt{\tau_K} + (1+K^2)\sqrt{\tau_f})}. \\
\end{eqnarray*}

Expanding,
\begin{eqnarray*}
& & \tau_K-\tau_f \\
&=& 2i(Ke_n-\a\b_f)\cdot \xi+ ((\b_f \cdot \xi)^2 -(Ke_n \cdot \xi)^2)+(|\g_f|^2-K^2)L(\th,i\xi) \\
& & +(|\g_f|^2-|K|^2) +(1 + K^2)(|\xi|^2 - L(\th,\xi)).
\end{eqnarray*}
Therefore the second term has operator bounds $\lesssim C_{\mu}$, because each term involves multiplication by a function of $\th$ which is bounded by $C_{K}C_{\mu}$.  

Therefore
\[
\|\frac{1}{r} (T_{\overline{F_{\ell}} - G_+})z\|^2_{L^2(\Rniip)} \leq \d^2 \|z\|^2_{H^1(\Rniip)}
\]
for $\mu$ small enough.  Then
\begin{eqnarray*}
\|g\|^2_{L^2(\Rniip)} &\lesssim& \|\frac{1}{r} (T_{\overline{F_{\ell}} - G_+})z\|^2_{L^2(\Rniip)} \\
                      & & + h^2\|E_1 v\|^2_{L^2(\Rniip)} \\
                      &\lesssim& \d^2 \|z\|^2_{H^1(\Rniip)} + h^2\|E_1 v\|^2_{L^2(\Rniip)}\\
                      &\lesssim& \d^2 \|z\|^2_{H^1(\Rniip)} + h^2\| v\|^2_{H^1(\Rniip)}\\
\end{eqnarray*}

Since
\[
\Lphets v = (h\partial_r-\frac{1}{r}T_{G_{+}})z + hE_1 v,
\]
we have
\begin{eqnarray*}
\| \Lphets v \|^2_{L^2(\Rniip)} &\geq& \|(h\partial_r-\frac{1}{r}T_{G_{+}})z \|^2_{L^2(\Rniip)} - h^2\|E_1 v\|^2_{L^2(\Rniip)} \\
                                &\geq& \|J^{*}_{\ell} z\|^2_{L^2(\Rniip)} -\|\frac{1}{r}T_{\overline{F_{\ell}} - G_+} z\|^2_{L^2(\Rniip)} -h^2\| v\|^2_{H^1(\Rniip)} \\
                                &\gtrsim& \|z\|^2_{H^1(\Rniip)} - \d^2 \| z\|^2_{H^1(\Rniip)} - h^2\|v\|^2_{H^1(\Rniip)} \\
                                &\gtrsim& \|z\|^2_{H^1(\Rniip)} - h^2 \|v\|^2_{H^1(\Rniip)} \\
\end{eqnarray*}
for $\d$ small enough.  Therefore
\begin{eqnarray*}
\|g\|^2_{L^2(\Rniip)} &\lesssim& \d^2 \| \Lphets v \|^2_{H^1(\Rniip)} + h^2\| v\|^2_{H^1(\Rniip)} \\
                      &\lesssim& \d^2 \| \Lphets v \|^2_{H^1(\Rniip)} + h^2\| J^{-1}_s (1 - P) w \|^2_{H^1(\Rniip)} \\
\end{eqnarray*}

Using similar reasoning as for the small frequency case, 
\[
h^2\| J^{-1}_s (1 - P) w \|^2_{H^1(\Rniip)} \lesssim h^2\| J^{-1}_s (1 - P) w \|^2_{H^1_r(\Rniip)}.
\]
Therefore
\begin{eqnarray*}
\|g\|^2_{L^2(\Rniip)} &\lesssim& \d^2 \| \Lphets v \|^2_{H^1(\Rniip)} + h^2\| J^{-1}_s (1 - P) w \|^2_{H^1_r(\Rniip)} \\
                      &\lesssim& \d^2 \| \Lphets v \|^2_{H^1(\Rniip)} + h^2\| w_{\ell} \|^2_{L^2(\Rniip)} \\
\end{eqnarray*}

Then for $\d$ small enough,
\[
\|g\|_{L^2(\Rniip)} \lesssim \half \| \Lphets v \|_{L^2(\Rniip)}+ h \|w_{\ell} \|_{L^2(\Rniip)}.
\]
Now using \eqref{largefirstpart} and Lemma \ref{largeJHneg},  
\[
\frac{h}{\sqrt{\e}} \|w_{\ell} \|_{L^2(\Rniip)} \lesssim \|J_{\ell}\Lphets \chi_2 J^{-1}_{\ell} w_{\ell} \|_{H^{-1}_r(\Rniip)} + h \|w_{\ell} \|_{L^2(\Rniip)}+ O(h^{\infty})\|w \|_{L^2(\Rniip)}.
\]
Absorbing the second last term into the left side gives
\[
\frac{h}{\sqrt{\e}} \|w_{\ell} \|_{L^2(\Rniip)} \lesssim \|J_{\ell}\Lphets \chi_2 J^{-1}_{\ell} w_{\ell} \|_{H^{-1}_r(\Rniip)} + O(h^{\infty})\|w \|_{L^2(\Rniip)}.
\]
We can finish the argument as in the small frequency case to get
\[
\frac{h}{\sqrt{\e}}\|w_{\ell}\|_{L^2(\Rniip)} \lesssim \|\Lphets w_{\ell} \|_{H^{-1}(\Rniip)}+ O(h)\|w \|_{L^2(\Rniip)}.
\]
This finishes the proof of Lemma \ref{largeCarl}, and thus of Proposition \ref{simpleCarl}.

\section{Proof of Theorem \ref{mainCarl}}


Now I can prove Proposition \ref{specCarl} essentially by gluing together estimates of the form in Proposition \ref{simpleCarl}.  First note that by a change of variables similar to the ones in Section 2, we can show that if for all $\theta \in S^n$ such that some $(r,\th)$ is in $\Om_2$, 
\[
|\sin(\th_k)- 1| \leq \mu \mbox{ where } k = 1, \ldots, n-1
\]
and
\[
|\grad_{S^n}\log f - K e_n |_{S^n} \leq \mu,
\]
where $\mu$ is small enough, then
\begin{equation}\label{goodCarl}
\frac{h}{\sqrt{\e}}\| w\|_{L^2(\Om)} \lesssim \|\Lphe w \|_{H^{-1}(A_O)}
\end{equation}
for all $w \in \Czinf(\Om)$.

Now let $\Om$ be as in Proposition \ref{specCarl}.  We can take an open cover $U_1, \ldots, U_m$ of $\Om$ such that on each $\Om \cap U_j$, there exists $K_j$ such that under some choice of coordinates, $|\grad_{S^n} \log f - K_je_n| \leq \mu_{K_j}$ and $|\sin(\th_k)- 1| \leq \mu_{K_j}$, where $\mu_{K_j}$ is the value of $\mu$ from Proposition 1 which works for $K = K_j$.  (Since $|\grad_{S^n} \log f|$ must be bounded above, $\mu_{K_j}$ must be bounded below, and therefore this is possible with only finitely many $U_j$.)  

Let $\z_1, \ldots \z_m$ be a smooth partition of unity subordinate to the cover $U_1, \ldots U_m$.  Now for $w \in \Czinf(\Om)$, 
\[
w = \z_1 w + \ldots \z_m w =: w_1 + \ldots + w_m,
\]
where each $w_j \in \Czinf(\Om \cap U_j)$.  
Applying the result \eqref{goodCarl} to the domain $\Om \cap U_j$,
\[
\frac{h}{\sqrt{\e}}\| w_j \|_{L^2(\Om \cap U_j)} \lesssim \|\Lphe w_j \|_{H^{-1}(A_O)}
\]
for each $j = 1, \ldots, m$.  Then
\[
\sum_j \frac{h}{\sqrt{\e}}\| w_j \|_{L^2(\Om)} \lesssim \sum_j \|\Lphe w_j \|_{H^{-1}(A_O)},
\]
so
\[
\frac{h}{\sqrt{\e}}\| w \|_{L^2(\Om)} \lesssim \sum_j \|\Lphe w_j \|_{H^{-1}(A_O)}.
\]

Now by the product rule,
\begin{eqnarray*}
\|\Lphe w_j \|_{H^{-1}(A_O)} &=& \| \Lphe \z_j w \|_{H^{-1}(A_O)} \\
                              &\leq& \| \z_j \Lphe w \|_{H^{-1}(A_O)} + Ch\|w\|_{A_O} \\
                              &\leq& \| \Lphe w \|_{H^{-1}(A_O)} + Ch\|w\|_{A_O}. \\
\end{eqnarray*}
Therefore
\begin{equation}\label{zeroCarl}
\frac{h}{\sqrt{\e}}\| w \|_{L^2(\Om)} \lesssim \|\Lphe w \|_{H^{-1}(A_O)}.
\end{equation}
for $\e$ small enough, for every $w \in \Czinf(\Om)$.

To treat the case where $W$ and $q$ are non-zero, note that
\[
\Lphaqe = \Lphe + h(W \cdot hD + hD \cdot W) + 2ihW\cdot \grad (\log r + h\frac{\log^2 r}{2\e}) + h^2(q + W^2).
\]
Therefore 
\[
\frac{h}{\sqrt{\e}}\| w \|_{L^2(\Om)} \lesssim \|\Lphaqe w \|_{H^{-1}(A_O)} + hC\|w\|_{L^2(A_O)}
\]
and the last term can be absorbed into the left side to give
\[
\frac{h}{\sqrt{\e}}\| w \|_{L^2(\Om)} \lesssim \|\Lphaqe w \|_{H^{-1}(A_O)}.
\]

This completes the proof of Proposition \ref{specCarl}.

Finally, I can prove Theorem \ref{mainCarl} by gluing together estimates of the form in Proposition \ref{specCarl}.  If $E$ is a compact subset of $F_{\Om} \setminus Z_{\Om}$, then define $\Om '$ to be a smooth domain containing $\Om$, with $\partial \Om \cap \partial \Om ' = E$.   

Then let $U_1, \ldots, U_m$ be an open cover of $\Om$ such that each $\partial U_j \cap E$ coincides with a graph of the form $r = f_j(\th)$.  For each $U_j$, Proposition \ref{specCarl} gives us
\[
\frac{h}{\sqrt{\e}}\| w \|_{L^2(U_j)} \lesssim \|\Lphaqe w \|_{H^{-1}(A_j)}
\]
for $w \in \Czinf(U_j)$.  

Each $A_j$ is defined by the graph of a function $r = f_j(\th)$, and since $\partial \Om '$ is smooth and coincides with $\partial \Om$ on $E$, and $E$ is a compact subset of $F_{\Om} \setminus Z_{\Om}$, $\partial \Om '$ must be locally a graph in a neighbourhood of $E$.  Therefore we can assume that $A_j$ coincides with $\Om '$ in a neighbourhood of each $U_j$, in the sense that their characteristic functions are equal in that neighbourhood.  Then there is a smooth cutoff function $\chi_j$ defined on $A_j \cap \Om'$ which is identically one on $U_j$ but vanishes outside on the complements of $A_j$ and $\Om '$.  Multiplication by this function provides a bounded map from $H^1_0(A_j)$ to $H^1_0(\Om ')$ and vice versa, and therefore for $w \in \Czinf(U_j)$,  $\|w\|_{H^{-1}(\Om ')} \simeq \|w\|_{H^{-1}(A_j)}$.  Therefore we have
\[
\frac{h}{\sqrt{\e}}\| w \|_{L^2(U_j)} \lesssim \|\Lphaqe w \|_{H^{-1}(\Om ')}
\]
for $w \in \Czinf(U_j)$.  

Gluing together these estimates in the matter used above gives
\[
\frac{h}{\sqrt{\e}}\| w \|_{L^2(\Om)} \lesssim \|\Lphaqe w \|_{H^{-1}(\Om ')}
\]
for $w \in \Czinf(\Om)$.  

Finally,  note that if $w \in C^{\infty}_0(\Om)$, then $e^{\frac{(\log r)^2}{\e}}w \in C^{\infty}_0(\Om)$, so
\[
\frac{h}{\sqrt{\e}}\|e^{\frac{(\log r)^2}{\e}} w\|_{L^2(\Om)} \lesssim \| e^{\frac{(\log r)^2}{\e}} \Lphaq w \|_{H^{-1}(\Om ')}.
\]
On $\Om$, there exists some $C_{\Om}$ such that $1 \leq e^{\frac{(\log r)^2}{\e}} \leq e^{\frac{C_{\Om}}{\e}}$, so
\[
h \|w\|_{L^2(\Om)} \lesssim \|\Lphaq w \|_{H^{-1}(\Om ')}  
\]
as desired.  This establishes Theorem \ref{mainCarl}.  
\vspace{4mm}

\textbf{Remark} If we want to prove Theorem \ref{mainthm2} instead of Theorem \ref{mainthm}, then we could begin by supposing that $f : S^n \rightarrow (0,\infty)$ is a $\Cinf$ function such that $\Om$ lies entirely in the region $A_I = \{ (r,\th) | r \leq f(\th) \} \subset \RnI$, and $E$ is a subset of the graph $r = f(\th)$.  Then by the change of variables $(r,\th) \mapsto (\frac{1}{r},\th)$, $\Om$ maps to a region $\hat{\Om}$of the form described in Proposition \ref{specCarl}.  Therefore by \eqref{zeroCarl}, 
\[
h \|w\|_{L^2(\hat{\Om})} \lesssim \|\Lphe w \|_{H^{-1}(\hat{A}_O)}.
\]
for $w \in \Czinf{\hat{\Om}}$, where $\ph = \log r$.  Changing variables back gives the Carleman estimate
\[
h \|w\|_{L^2(\Om)} \lesssim \|\mathcal{L}_{-\log r,\e} w \|_{H^{-1}(A_I)}
\]
for $w \in \Czinf{\Om}$.  Therefore by the same kind of argument as above, we get 
\[
h \|w\|_{L^2(\Om)} \lesssim \|\Lphaq w \|_{H^{-1}(\Om ')}  
\]
where $\ph = -\log r$, and $\Om '$ is a domain containing $\Om$, with $E \subset \partial \Om ' \cap \partial \Om$, whenever $E$ is of the form described in Theorem \ref{mainthm2}. Using this Carleman estimate in the place of Theorem \ref{mainCarl} in the remainder of the argument proves Theorem \ref{mainthm2} instead of Theorem \ref{mainthm}.

\section{Complex Geometric Optics Solutions}

Theorem \ref{mainCarl} can be used to construct solutions to equations of the system \eqref{pDirichletProblem}.  The key is the following proposition.

\begin{prop}\label{HBprop}

For every $v \in L^2(\Om)$, there exists $u \in H^1(\Om)$ such that 
\begin{eqnarray*}
\Lphaq^{*} u &=& v  \mbox{ on } \Om\\
u|_{E}     &=& 0 
\end{eqnarray*}

and
\[
\|u\|_{H^1(\Om)} \lesssim \frac{1}{h}\|v\|_{L^2(\Om)}.
\] 

\end{prop}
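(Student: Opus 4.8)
The plan is to derive Proposition \ref{HBprop} from the Carleman estimate of Theorem \ref{mainCarl} by a Hahn--Banach duality argument, in the same spirit as the construction of complex geometric optics solutions in [DKSU], but using the $H^{-1}(\Om ')$ version of the estimate so that the boundary condition on $E$ is retained.

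Fix $v \in L^2(\Om)$. First I would introduce the linear functional $T$ on the subspace $\Lphaq \Czinf(\Om) \subset H^{-1}(\Om ')$ defined by $T(\Lphaq w) = \int_{\Om} w v$ for $w \in \Czinf(\Om)$, where $\int fg$ denotes the bilinear $L^2$ pairing (any conjugations demanded by the convention for $\Lphaq^{*}$ can be reinstated verbatim). Theorem \ref{mainCarl} yields
\[
\left| T(\Lphaq w) \right| \le \|w\|_{L^2(\Om)} \|v\|_{L^2(\Om)} \lesssim \frac{1}{h} \|\Lphaq w\|_{H^{-1}(\Om ')} \|v\|_{L^2(\Om)},
\]
which simultaneously shows that $T$ is well defined (since $\Lphaq w = 0$ in $H^{-1}(\Om ')$ forces $w = 0$) and bounded, with $\|T\| \lesssim \frac{1}{h} \|v\|_{L^2(\Om)}$. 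I would then extend $T$ by Hahn--Banach to a functional on all of $H^{-1}(\Om ')$ with the same norm. Since $H^{-1}(\Om ')$ is by definition the dual of the Hilbert space $H^1_0(\Om ')$, and Hilbert spaces are reflexive, the extension is represented by a unique $\tilde u \in H^1_0(\Om ')$ with $\|\tilde u\|_{H^1_0(\Om ')} = \|T\| \lesssim \frac{1}{h} \|v\|_{L^2(\Om)}$ and $F(\tilde u) = T(F)$ for all $F \in H^{-1}(\Om ')$.

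It then remains to check that $u := \tilde u|_{\Om}$ has the required properties. It lies in $H^1(\Om)$ with $\|u\|_{H^1(\Om)} \le \|\tilde u\|_{H^1_0(\Om ')} \lesssim \frac{1}{h} \|v\|_{L^2(\Om)}$, the asserted bound. Since $\tilde u$ has zero trace on $\partial \Om '$ and $E \subset \partial \Om ' \cap \partial \Om$ by the construction of $\Om '$ in Section 6, the trace of $u$ on $E$ vanishes. Finally, for $w \in \Czinf(\Om)$ the element $\Lphaq w \in H^{-1}(\Om ')$ is simply the $L^2$ function $\Lphaq w$, supported in $\overline{\Om}$, acting on $H^1_0(\Om ')$ by integration, so $T(\Lphaq w) = (\Lphaq w)(\tilde u) = \int_{\Om}(\Lphaq w)\,u$; integrating by parts over $\Om$ — no boundary terms appear, as $w$ is compactly supported in $\Om$ — identifies this with the distributional pairing $\langle \Lphaq^{*}u, w\rangle$, while $T(\Lphaq w) = \int_\Om v w$. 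Hence $\langle \Lphaq^{*}u, w\rangle = \int_\Om v w$ for all $w \in \Czinf(\Om)$, i.e. $\Lphaq^{*} u = v$ on $\Om$.

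The argument is essentially routine once Theorem \ref{mainCarl} is in hand; the one place that demands care is the bookkeeping between the two domains $\Om \subset \Om '$. One must use that $E$ is a genuine portion of $\partial \Om '$, so that the vanishing trace of $\tilde u$ on $\partial \Om '$ delivers precisely $u|_E = 0$ and imposes nothing extra on $\partial \Om \setminus E$, and that $\Lphaq w$ for $w \in \Czinf(\Om)$ — though controlled only in $H^{-1}(\Om ')$ — pairs with $\tilde u$ by plain integration over $\Om$. This last point is what permits the integration by parts to be performed inside $\Om$ and produces an equation on $\Om$ alone, leaving $u$ free on $\partial \Om \setminus E$.
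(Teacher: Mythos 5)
Your proposal is correct and follows essentially the same Hahn--Banach duality argument as the paper: define the functional on the subspace $\{\Lphaq w : w \in \Czinf(\Om)\}$, bound it via Theorem \ref{mainCarl}, extend, represent the extension by an element of $H^1_0$ of the larger domain, and read off the boundary condition on $E$ from the vanishing trace. The attention you pay to the bookkeeping between $\Om$ and $\Om'$ (the paper's proof writes $A_O$ where $\Om'$ is meant) is a welcome clarification rather than a deviation.
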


\begin{proof}
The proof is based on a Hahn-Banach argument.  Suppose $v \in L^2(\Om)$.  Then for all $w \in C^{\infty}_0(\Om)$,
\[
|(w|v)_{\Om}| \lesssim \frac{1}{h}\|v\|_{L^2(\Om)} h \|w\|_{L^2(\Om)}.
\]
Therefore, by Theorem \ref{mainCarl},
\begin{equation}\label{useCarl}
|(w|v)_{\Om}| \lesssim \frac{1}{h}\|v\|_{L^2(\Om)} \|\Lphaq w \|_{H^{-1}(A_O)}.
\end{equation}
Now consider the subspace
\[
\{ \Lphaq w | w \in C^{\infty}_0(\Om)\} \subset H^{-1}(A_O).
\]
By the estimate from Theorem \ref{mainCarl}, the map $ \Lphaq w \longmapsto (w|v)_{\Om}$ is well-defined on this subspace.  It is a linear functional, and by \eqref{useCarl}, it is bounded by $\frac{C}{h}\| v\|_{L^2(\Om)}$. 

Therefore by Hahn-Banach, there exists an extension of this functional to the whole space $H^{-1}(A_O)$ with the same bound.  This can be represented by an element of the dual space $H^1_0(A_O)$, so there exists $u \in H^1_0(A_O)$ such that 
\[
\|u \|_{H^1(A_O)} \lesssim \frac{1}{h}\|v\|_{L^2(\Om)}
\]
and
\begin{eqnarray*}
(w|v)_{\Om} &=& (\Lphaq w | u)_{A_O} \\
            &=& (\Lphaq w | u)_{\Om} \\
\end{eqnarray*}
for all $w \in C^{\infty}_0(\Om)$.  Note that $u \in H^1_0(A)$ implies that $u|_E = 0$.  Then 
\[
(w|v)_{\Om} = (w |\Lphaq^{*} u)_{\Om}
\]
since $w \in C^{\infty}_0(\Om)$, and thus
\[
(w| v - \Lphaq^{*} u)_{\Om} = 0
\]
for all $v \in C^{\infty}_0(\Om)$.  Therefore $v = \Lphaq^{*} u$ on $\Om$, and 
\[
\|u \|_{H^1(\RnI)} \lesssim \frac{1}{h}\|v\|_{L^2(\Om)}
\]
as desired.  

\end{proof}

Now I can construct the complex geometrical optics solutions.  

\begin{prop}\label{boundarysolns}

There exists a solution of the problem 
\begin{eqnarray*}
\Laq u &=& 0 \mbox{ on } \Om\\
u|_{E} &=& 0 
\end{eqnarray*}
of the form $u = e^{\frac{1}{h}(\ph + i \psi)}(a+r) - e^{\frac{\ell}{h}} b$, where $\ph(x,y) = \log r$, $\psi$ is a solution to the eikonal equation $\grad \ph \cdot \grad \psi = 0,  |\grad \ph| = |\grad \psi|$; $a$ and $b$ are $C^2$ functions on $\Om$; and 
\[
\mathrm{Re} \, \ell(x,y) = \ph(x,y) - k(x,y)
\]
where $k(x) \simeq \mathrm{dist}(x,E)$ in a neighbourhood of $E$, and $b$ has its support in that neighbourhood.  Finally, $r \in H^1(\Om)$, with $r|_{E} = 0$, $\|r\|_{H^1(\Om)} = O(h)$, and $\|r\|_{L^2(\partial \Om)} = O(h^{\half})$.
\end{prop}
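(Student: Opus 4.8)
The plan is to build $u$ in three layers: a WKB (complex geometric optics) main term $e^{\frac1h(\ph+i\psi)}a$ that solves $\Laq u=0$ to leading order, a boundary‑layer corrector $e^{\ell/h}b$ supported near $E$ that, together with the main term, vanishes on $E$ to leading order, and a remainder $e^{\frac1h(\ph+i\psi)}r$ furnished by Proposition \ref{HBprop} that kills the residual error and enforces $u|_E=0$ exactly. Write $\Phi=\ph+i\psi$. Since $\ph$ is real and $W$ is a real vector field, conjugation gives $h^2 e^{-\Phi/h}\Laq e^{\Phi/h}=e^{-i\psi/h}\,\mathcal{L}_{-\ph,W,q}\,e^{i\psi/h}$, where $\mathcal{L}_{-\ph,W,q}:=h^2e^{-\ph/h}\Laq e^{\ph/h}=(\mathcal{L}_{\ph,W,\bar q})^{*}$. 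Setting $\tilde r=e^{i\psi/h}r$, the equation $\Laq u=0$ for $u=e^{\Phi/h}(a+r)-e^{\ell/h}b$ becomes
\[
\mathcal{L}_{-\ph,W,q}\,\tilde r \;=\; h^2 e^{-\ph/h}\Laq\!\left(e^{\ell/h}b\right)-\mathcal{L}_{-\ph,W,q}\!\left(e^{i\psi/h}a\right)\;=:\;v .
\]
Because $\mathcal{L}_{-\ph,W,q}=(\mathcal{L}_{\ph,W,\bar q})^{*}$, this is exactly the type of equation solved by Proposition \ref{HBprop} (applied with $q$ replaced by $\bar q$). So the whole construction reduces to choosing $\psi,a,\ell,b$ so that (i) $\|v\|_{L^2(\Om)}=O(h^2)$, and (ii) $e^{\Phi/h}a=e^{\ell/h}b$ on $E$.

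For the main term: since $\ph=\log|x-x_0|$ is a limiting Carleman weight, the eikonal equation $\grad\ph\cdot\grad\psi=0$, $|\grad\psi|=|\grad\ph|$ has a smooth solution $\psi$ on $\overline\Om$ — this is the explicit construction of [DKSU]. Then $\grad\Phi\cdot\grad\Phi=|\grad\ph|^2-|\grad\psi|^2+2i\grad\ph\cdot\grad\psi=0$, so the $h^{-2}$ term drops out of $e^{-\Phi/h}\Laq(e^{\Phi/h}a)$ and one is left with
\[
e^{-\Phi/h}\Laq\!\left(e^{\Phi/h}a\right)=-\tfrac1h\Bigl(2\grad\Phi\cdot\grad a+(\Lap\Phi)a+2i\,(W\cdot\grad\Phi)a\Bigr)+R_0 a,
\]
with $R_0$ a first order operator with bounded coefficients. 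Writing $a=e^{\chi}a^{\flat}$, where $\chi$ solves the (solvable) transport equation $\grad\Phi\cdot\grad\chi=-i\,W\cdot\grad\Phi$, reduces the bracket to the magnetic‑free transport equation $2\grad\Phi\cdot\grad a^{\flat}+(\Lap\Phi)a^{\flat}=0$; in coordinates adapted to the foliation by $\ph$ and $\psi$ this is a $\bar\partial$‑type equation, solvable for a smooth, nowhere vanishing $a^{\flat}$ on $\overline\Om$. With such a choice $e^{-\Phi/h}\Laq(e^{\Phi/h}a)$ is bounded in $C^0(\overline\Om)$ uniformly in $h$, hence $\|\mathcal{L}_{-\ph,W,q}(e^{i\psi/h}a)\|_{L^2(\Om)}=h^2\|e^{-\Phi/h}\Laq(e^{\Phi/h}a)\|_{L^2(\Om)}=O(h^2)$.

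For the corrector I would solve, in a fixed neighborhood $N$ of $E$, the eikonal equation $\grad\ell\cdot\grad\ell=0$ with $\mathrm{Re}\,\ell=\ph-k$ where $k\simeq\mathrm{dist}(\cdot,E)$ and $\mathrm{Im}\,\ell|_E=\psi|_E$, and then the transport equation $2\grad\ell\cdot\grad b+(\Lap\ell)b+2i\,(W\cdot\grad\ell)b=0$ with $b|_E=a|_E$, finally multiplying $b$ by a cutoff supported in $N$ and equal to $1$ near $E$. With these choices $\Laq(e^{\ell/h}b)=e^{\ell/h}\,O_{C^0}(1)$ on $N$, so, using $|e^{(\ell-\ph)/h}|=e^{-k/h}$ and the tubular‑neighborhood estimate $\int_\Om e^{-2k/h}\,dx\lesssim h$, one gets $\|h^2e^{-\ph/h}\Laq(e^{\ell/h}b)\|_{L^2(\Om)}\lesssim h^{2}\cdot h^{1/2}$. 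Hence $\|v\|_{L^2(\Om)}=O(h^2)$, and on $E$, where $\mathrm{Re}\,\ell=\ph$, $\mathrm{Im}\,\ell=\psi$ and $b=a$, we have $e^{\Phi/h}a=e^{\ell/h}b$, which is (ii). I expect the construction of $\ell$ and $b$ — in particular solving the eikonal equation with the prescribed real part $\ph-k$ together with $\mathrm{Im}\,\ell|_E=\psi|_E$, and verifying that the associated characteristic (Hamilton–Jacobi) problem is non‑degenerate along $E$ — to be the main technical obstacle; this is precisely where the hypothesis that $E$ is a compact subset of $\frontom\setminus Z_\Om$ enters (strictness of $(x-x_0)\cdot\nu<0$ on $E$ makes the relevant transversality uniform).

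Finally, Proposition \ref{HBprop} (with $q\mapsto\bar q$) yields $\tilde r\in H^1(\Om)$ with $\mathcal{L}_{-\ph,W,q}\tilde r=v$, $\tilde r|_E=0$, and $\|\tilde r\|_{H^1(\Om)}\lesssim h^{-1}\|v\|_{L^2(\Om)}=O(h)$. Set $r=e^{-i\psi/h}\tilde r$. Since $|e^{-i\psi/h}|=1$ and $h\grad r=e^{-i\psi/h}\bigl(h\grad\tilde r-i(\grad\psi)\tilde r\bigr)$, the semiclassical norm obeys $\|r\|_{H^1(\Om)}\lesssim\|\tilde r\|_{H^1(\Om)}=O(h)$, and $r|_E=0$. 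The semiclassical trace inequality $\|r\|_{L^2(\partial\Om)}^2\lesssim h^{-1}\|r\|_{H^1(\Om)}^2$ then gives $\|r\|_{L^2(\partial\Om)}=O(h^{1/2})$. Unwinding the conjugation, $u=e^{\Phi/h}(a+r)-e^{\ell/h}b$ solves $\Laq u=0$ on $\Om$, and $u|_E=e^{\Phi/h}a|_E-e^{\ell/h}b|_E+e^{\Phi/h}r|_E=0$ by (ii) and $r|_E=0$. This produces the asserted solution, with $\ph=\log r$, $\psi$ solving the eikonal equation, $a=e^{\chi}a^{\flat}$ and $b$ the $C^2$ amplitudes, $\mathrm{Re}\,\ell=\ph-k$ with $k\simeq\mathrm{dist}(\cdot,E)$, and $r$ satisfying $r|_E=0$, $\|r\|_{H^1(\Om)}=O(h)$, $\|r\|_{L^2(\partial\Om)}=O(h^{1/2})$.
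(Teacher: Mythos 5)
Your architecture is exactly the paper's: the same three-layer ansatz $u = e^{\frac{1}{h}(\ph+i\psi)}(a+r) - e^{\frac{\ell}{h}}b$, the same reduction of the amplitude equation to a $\bar\partial$-type transport equation (your $a = e^{\chi}a^{\flat}$ is the paper's $a = e^{\Phi}$ with the magnetic term folded into the exponent), the same use of Proposition \ref{HBprop} on the adjoint-conjugated operator to produce $r$ with $r|_E=0$ and $\|r\|_{H^1}=O(h)$, and the same semiclassical trace bound at the end. Your bookkeeping of the adjoint (applying Proposition \ref{HBprop} with $q\mapsto\bar q$) is in fact more careful than the paper's, and you correctly locate where $E\subset F_\Om\setminus Z_\Om$ enters: it makes $|\nd\ph|$ bounded below on $E$, which is the non-degeneracy needed for the characteristic problem for $\ell$.

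The one genuine gap is the step you yourself flag as "the main technical obstacle": you propose to solve $\grad\ell\cdot\grad\ell=0$ \emph{exactly} in a neighbourhood of $E$ with $\mathrm{Re}\,\ell=\ph-k$ prescribed throughout and $\mathrm{Im}\,\ell|_E=\psi|_E$, and likewise to solve the transport equation for $b$ exactly. As posed this is over-determined (prescribing the full real part everywhere is not Cauchy data for the complex eikonal equation), and an exact local solution is not available in general. The paper's device is different: it prescribes Cauchy data on $E$ only, namely $\ell|_E=\ph+i\psi$ and $\nd\ell|_E=-\nd(\ph+i\psi)|_E$ (the sign flip prevents recovering $\ph+i\psi$ itself and forces $\nd\mathrm{Re}\,\ell|_E<-\e_0$, whence $\mathrm{Re}\,\ell=\ph-k$ with $k\simeq\mathrm{dist}(\cdot,E)$ is a \emph{consequence}), builds $\ell$ and $b$ only as Borel-summed formal power series in the normal variable, and accepts errors of size $O(\mathrm{dist}(x,E)^{\infty})$ in both the eikonal and transport equations. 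With such approximate solutions your intermediate pointwise claim $\Laq(e^{\ell/h}b)=e^{\ell/h}O_{C^0}(1)$ fails (the eikonal residual enters at order $h^{-2}$); the correct accounting is $|h^2 e^{-\ph/h}\Laq(e^{\ell/h}b)| = e^{-k/h}\bigl(O(\mathrm{dist}(x,E)^{\infty})+O(h^2)\bigr)$, which is $O(h^2)$ after splitting into $\mathrm{dist}(x,E)\lessgtr h^{1/2}$ and using $e^{-k/h}$ to kill the region where the power-series error is not small. So your overall estimate $\|v\|_{L^2(\Om)}=O(h^2)$ survives, but only after replacing "solve exactly" by this approximate construction; everything else in your proposal then goes through as written.
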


The proof is a combination of the proofs of the equivalent theorems in [DKSU] and [KSU].

\begin{proof}
Let $\ph(r,\th) = \log r$, and take $\psi(r,\th) = d_{S^n}(\th, \omega)$ for some fixed point $\omega \in S^n$.  If $\omega \neq \theta$ for all $(r,\th) \in \Om$, then $\psi$ solves the eikonal equation $\grad \ph \cdot \grad \psi = 0,  |\grad \ph| = |\grad \psi|$.  Then
\begin{eqnarray*}
h^2 \Laq e^{\frac{1}{h}(\ph + i\psi)} &=& e^{\frac{1}{h}(\ph + i\psi)}(h(D+W) \cdot (\grad \psi - i\grad \ph) \\
                                      & & + h(\grad \psi - i\grad \ph) \cdot (D+W) + h^2 \Laq).
\end{eqnarray*}
Therefore if $a$ is a $C^2$ solution to 
\[
(\grad \psi - i\grad \ph) \cdot Da + (\grad \psi - i\grad \ph) \cdot Wa + \frac{1}{2i}(\Lap \psi - i \Lap \ph)a = 0,
\]
then 
\[
h^2 \Laq e^{\frac{1}{h}(\ph + i\psi)} a = e^{\frac{1}{h}(\ph + i\psi)} h^2 \Laq a = O(h^2)e^{\frac{1}{h}(\ph + i\psi)}.
\]
We can look for an exponential solution $a = e^{\Phi}$, in which case the relevant equation becomes 
\[
(\grad \ph + i \grad \psi) \cdot \grad \Phi + i(\grad \ph + i \grad \psi) \cdot W  + \half \Lap(\ph + i \psi)= 0.
\]
Now suppose $x \in \RnI$, and write $x = (x_{\omega}, x')$, where $x_{\omega}$ is the component of $x$ in the $\omega$ direction, and $x'$ are the remaining components.  Then by considering $z = x_{\omega} + i |x'|$ as a complex variable, we get $\ph = \mathrm{Re}\log z$ and $\psi = \mathrm{Im} \log z$.  Now our equation is an inhomogeneous Cauchy-Riemann equation in the $z$ variable, and can be solved by the Cauchy formula.  Then $a$ is $C^2$, since $W$ is.  Note that the solution is only unique up to addition of terms $g_{a}$ with 
\begin{equation}\label{hCR1}
(\grad \ph + i \grad \psi) \cdot \grad g_a = 0.
\end{equation}

Now I want to construct a (complex valued) function $\ell$ to be an approximate solution to the equation 
\begin{equation*}
\begin{split}
\grad \ell \cdot \grad \ell &= 0 \\
                    \ell|_E &= \ph + i\psi.
\end{split}
\end{equation*}
In order to avoid duplicating the solution $\ph + i\psi$, we can ask for
\[
\nd \ell|_{E} = -\nd(\ph + i\psi)|_{E}.
\]
To construct an approximate solution, pick coordinates $(t,s)$ near $E$ such that $t$ are the coordinates along $E$ and $s$ is perpendicular to $E$.  Suppose $\ell$ takes the form of a power series
\[
\ell(t,s) = \sum_{j = 0}^{\infty} a_j(t)s^j.
\]
Then
\begin{eqnarray*}
\grad \ell &=& (\grad_t \ell, \partial_s \ell) \\
           &=& (\sum_{j=0}^{\infty}\grad_t a_j(t) s^j, \sum_{j=0}^{\infty}a_j(t)js^{j-1}) 
\end{eqnarray*}
Expanding the equation $\grad \ell \cdot \grad \ell = 0$ gives
\begin{eqnarray*}
0 &=& \sum_{j+k=0}\grad_t a_j \grad_t a_k + \left( \sum_{j+k=1}\grad_t a_j \grad_t a_k \right)s + \left( \sum_{j+k=2}\grad_t a_j \grad_t a_k \right)s^2+\ldots \\
  & & + \sum_{j+k=2} jka_j a_k + \left( \sum_{j+k=3} jka_j a_k \right)s + \left( \sum_{j+k=4} jka_j a_k \right)s^2 + \ldots \\
\end{eqnarray*}
Considering each power of $s$ separately gives a sequence of equations
\begin{equation}\label{formaleqn}
\sum_{j+k=m}\grad_t a_j \grad_t a_k + \sum_{j+k=m+2} jka_j a_k = 0.
\end{equation}
for each $m = 0,1,2,\ldots$.  The boundary conditions determine $a_0$ and $a_1$, so we can solve this recursively.  If $m \geq 1$, and all $a_j$ are known for $ j \leq m$, the only part of \eqref{formaleqn} which contains an unknown looks like $2(m+1)a_1 a_{m+1}$.  Note that 
\[
a_1 = -\nd(\ph + i\psi)
\]
Since $E$ coincides with a graph $r = f(\th)$ for some smooth function $f$, and $\ph = \log r$, there exists some $\e_0 > 0$ so that $|a_1| > \e_0$ on $E$, and so we can divide by $a_1$ to solve for $a_{m+1}$.  

This gives a formal power series for $\ell$, which may or may not converge outside $s = 0$.  However, we can construct a $C^{\infty}$ function in $\Om$ whose Taylor series in $s$ coincides with this formal power series at $s = 0$.  

Let $\chi : \R \rightarrow [0,1]$ be a smooth cutoff function which is identically one on $[-\half, \half]$, and identically zero outside $(-1,1)$.  Set 
\[
\ell = \sum_{j=0}^{\infty} a_j(t)\chi(b_j s)s^j = \sum_{j=0}^{\infty} c_j(t,s)s^j
\]
where 
\[
b_j = \max_{k \leq j} \{ \| a_k(t)\|_{C^k(\Om)}, 1 \}.
\]
This defines a $C^{\infty}$ function on $\Om$ whose Taylor series at $s = 0$ coincides with the formal power series calculated earlier.  Now examine the expression $\grad \ell \cdot \grad \ell$.  The coefficient of $s^m$ in this expansion is
\[
\sum_{j+k=m}\grad_t c_j \grad_t c_k + \sum_{j+k=m}(\partial_s c_j + (j+1)c_{j+1})(\partial_s c_k + (k+1)c_{k+1}).
\]
For $s \leq \frac{1}{2b_p}$, all $c_j(t,s) = a_j(t)$ for $ j \leq p$, so this is exactly zero by design when $s \leq \frac{1}{2b_p}$ and $m < p$.  Therefore on the region where $s \leq \frac{1}{2b_p}$, $\grad \ell \cdot \grad \ell$ is a smooth function which is $O(s^p)$ as $s$ goes to zero.  Since this can be done for any $p$, 
\[
\grad \ell \cdot \grad \ell = O(\mathrm{dist}(x, E)^{\infty}).
\]
Moreover, 
\[
\nd \mathrm{Re} \, \ell|_{E} = -\nd \ph|_{E} < -\e_0,
\]
and
\[
\mathrm{Re} \, \ell|_{E} = \ph|_{E}
\]
so in a neighbourhood of of $E$, 
\begin{equation}\label{ellgood}
\mathrm{Re} \, \ell(x,y) = \ph(x,y) - k(x,y)
\end{equation}
where $k(x) \simeq \mathrm{dist}(x,E)$ in a neighbourhood of $E$.

By a similar method, we can construct an approximate solution $b$ for the problem 
\begin{eqnarray*}
\grad \ell \cdot D b + \grad \ell \cdot W b &=& 0 \\
                                     b|_{E} &=& a|_{E}.
\end{eqnarray*}
so 
\begin{eqnarray*}
\grad \ell \cdot D b + \grad \ell \cdot W b &=& O(\mathrm{dist}(x,E)^{\infty}) \\
                                     b|_{E} &=& a|_{E}.
\end{eqnarray*}
Multiplying $b$ by a smooth cutoff function does not change these properties, so we may as well assume that $b$ is only supported close to $E$ for \eqref{ellgood} to hold.  Then 
\[
-h^2 \Laq(e^{\frac{\ell}{h}}b) = e^{\frac{\ell}{h}} (O(\mathrm{dist}(x,E)^{\infty}) + O(h^2)),
\]
so
\[
|h^2 \Laq(e^{\frac{\ell}{h}}b)| = e^{\frac{\ph}{h}}e^{\frac{-k}{h}}(O(\mathrm{dist}(x,E)^{\infty}) + O(h^2)).
\]
If $\mathrm{dist}(x,E) \leq h^{\half}$, for $h$ small, this is $e^{\frac{\ph}{h}}O(h^2)$, because of the $O(\mathrm{dist}(x,E)^{\infty})$ term.  On the other hand, if $\mathrm{dist}(x,E) \geq h^{\half}$, this is still $e^{\frac{\ph}{h}}O(h^2)$, because of $e^{\frac{-k}{h}}$.  

Now $e^{\frac{1}{h}(\ph + i \psi)}a - e^{\frac{\ell}{h}} b = 0$ on $E$, and 
\[
e^{-\frac{\ph}{h}} h^2 \Laq (e^{\frac{1}{h}(\ph + i\psi)} a + e^{\frac{\ell}{h}}b) = v
\]
where $\|v\|_{L^2(\Om)} = O(h^2)$.  By Proposition \ref{HBprop}, the problem 
\begin{eqnarray*}
\Lphaq^{*} r_1 =  e^{-\frac{\ph}{h}} h^2 \Laq e^{\frac{\ph}{h}} r_1 &=& -v  \mbox{ on } \Om\\
                                                           r_1|_{E} &=& 0 
\end{eqnarray*}
has an $H^1$ solution $r_1$ with 
\[
\|r_1\|_{H^1(\Om)} \lesssim \frac{1}{h}\|v\|_{L^2(\Om)} = O(h)
\]
Set $r = e^{-\frac{i\psi}{h}}r_1$, and $u = e^{\frac{1}{h}(\ph + i \psi)}(a+r) - e^{\frac{\ell}{h}} b$.  Then
\[
\|r\|_{H^1(\Om)} = O(h),
\]
so $\|r\|_{L^2(\partial \Om)} = O(h^{\half})$ by the trace theorem, and
\begin{eqnarray*}
\Laq u &=& 0 \mbox{ on } \Om\\
u|_{E} &=& 0. 
\end{eqnarray*}
This finishes the proof.
\end{proof}

If the boundary condition is not needed, then the result is as follows:

\begin{prop}\label{noboundarysolns}
There exists a solution of the problem 
\[
\Laq u = 0 \mbox{ on } \Om
\]
of the form $u = e^{\frac{1}{h}(\ph + i \psi)}(a+r)$, where $\ph(x,y)$ is any limiting Carleman weight, $\psi$ is any solution to the eikonal equation, $a$ is a $C^2$ function on $\Om$, and $r \in H^1(\Om)$, with $\|r\|_{H^1(\Om)} = O(h)$, and $\|r\|_{L^2(\partial \Om)} = O(h^{\half})$.
\end{prop}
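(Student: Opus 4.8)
The plan is to repeat the complex geometric optics construction from the proof of Proposition \ref{boundarysolns} almost verbatim, but in the simpler situation where no boundary condition on $E$ is imposed: the auxiliary function $\ell$ with $\grad\ell\cdot\grad\ell = O(\mathrm{dist}(x,E)^\infty)$, the amplitude $b$, and all the formal power series work near $E$ are simply omitted. What is left is the classical complex geometric optics construction of [DKSU] for $\Laq$ relative to a limiting Carleman weight.

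First I would build the amplitude $a$. As in the proof of Proposition \ref{boundarysolns}, conjugating $h^2\Laq$ by $e^{\frac{1}{h}(\ph+i\psi)}$ shows that the $O(h)$ part of $e^{-\frac{1}{h}(\ph+i\psi)}h^2\Laq(e^{\frac{1}{h}(\ph+i\psi)}a)$ vanishes precisely when $a$ solves the transport equation
\[
(\grad\psi - i\grad\ph)\cdot Da + (\grad\psi - i\grad\ph)\cdot Wa + \tfrac{1}{2i}(\Lap\psi - i\Lap\ph)a = 0 .
\]
Since $\ph$ is a limiting Carleman weight and $\psi$ solves the eikonal equation, this equation is solvable on $\overline{\Om}$: the substitution $a = e^{\Phi}$ converts it into an inhomogeneous Cauchy--Riemann equation in the complex variable attached to the pair $(\ph,\psi)$, which can be solved by the Cauchy integral formula exactly as for the logarithmic weight in Proposition \ref{boundarysolns}, and $W\in C^2$ yields $a\in C^2(\overline{\Om})$. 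With such an $a$ fixed, $e^{-\frac{1}{h}(\ph+i\psi)}h^2\Laq(e^{\frac{1}{h}(\ph+i\psi)}a) = O(h^2)$ in $L^2(\Om)$, the remainder being the genuinely second-order term $h^2\Laq a$.

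Next I would absorb this remainder. Setting $v = e^{-\frac{\ph}{h}}h^2\Laq(e^{\frac{1}{h}(\ph+i\psi)}a)\,e^{-\frac{i\psi}{h}}$, so that $\|v\|_{L^2(\Om)} = O(h^2)$, I would solve $\Lphaq^* r_1 = -v$ in $H^1(\Om)$ with $\|r_1\|_{H^1(\Om)} \lesssim h^{-1}\|v\|_{L^2(\Om)} = O(h)$. This is exactly Proposition \ref{HBprop}, save that the trace condition $r_1|_E = 0$ is no longer needed; correspondingly, since $\ph$ is now allowed to be an arbitrary limiting Carleman weight, the Hahn--Banach argument should be run using the basic Carleman estimate of [DKSU] (valid for any limiting Carleman weight) in place of Theorem \ref{mainCarl}. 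Then $r = e^{-\frac{i\psi}{h}}r_1$ has $\|r\|_{H^1(\Om)} = O(h)$ and, by the trace theorem, $\|r\|_{L^2(\partial\Om)} = O(h^{\half})$, while $u = e^{\frac{1}{h}(\ph+i\psi)}(a+r)$ solves $\Laq u = 0$ on $\Om$, as desired.

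I do not expect any real obstacle here: everything genuinely new in this paper --- the boundary condition on $E$ and the refined Carleman estimate of Theorem \ref{mainCarl} --- has been stripped away, and the only point that needs to be made explicitly is that both the solvability of the transport equation and the $H^1$ solvability estimate for $\Lphaq^*$ hold for a general limiting Carleman weight, which is precisely the [DKSU] setting.
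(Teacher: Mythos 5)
Your proposal is correct and coincides with the paper's treatment: the paper disposes of this proposition by noting it is essentially Lemma 3.4 of [DKSU], whose proof is exactly the construction you describe (solve the transport equation for the amplitude via the inhomogeneous Cauchy--Riemann equation, then absorb the $O(h^2)$ remainder by the Hahn--Banach/Carleman solvability argument, here run with the basic [DKSU] estimate since no vanishing on $E$ is required). Nothing further is needed.
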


This is essentially lemma 3.4 from [DKSU].  Note that we can always replace $a$ by $\g a$, where $\g$ is a solution to 
\[
(\grad \ph + i \grad \psi) \cdot \grad \g = 0.
\]
on $\Om$.

\section{Proof of Theorem \ref{mainthm}}

For convenience, $\| \cdot \|$ will denote the $L^2$ norm in this section, unless otherwise indicated.  The tilde as used in this section has nothing to do with the notation from section 2.  

Using Proposition \ref{boundarysolns}, we can construct $\tilde{u}_2 = e^{\frac{1}{h}(\ph + i \psi)}(a_2+r_2) - e^{\frac{\ell}{h}} b =: u_2 + u_r$ to be a solution to 
\begin{eqnarray*}
\Laqtwo \tilde{u}_2 &=& 0 \mbox{ on } \Om\\
\tilde{u}_2|_{E} &=& 0. 
\end{eqnarray*}
Then $-\ph$ is also a Carleman weight, and if $\ph$ and $\psi$ satisfy the eikonal equation, then so do $-\ph$ and $\psi$.  Therefore using Proposition \ref{noboundarysolns}, we can construct $u_1 = e^{\frac{1}{h}(-\ph + i\psi)}(a_1 + r_1)$ to be a solution to 
\[
\L_{W_1,\overline{q_1}}u_1 = 0.
\]
Let $w$ be the unique solution to 
\begin{eqnarray*}
\Laqone w         &=& 0 \\
w|_{\partial \Om} &=& \tilde{u}_2 |_{\partial \Om}.
\end{eqnarray*}
(This is where we use the assumption that $\Laqone$ does not have a zero eigenvalue.)  Note that in particular, $w|_E = \tilde{u}_2 |_E = 0$, so by the hypothesis on the Dirichlet-Neumann map, 
\[
\nd (w - \tilde{u}_2)|_{U} = 0. 
\]

Now 
\begin{equation}\label{nonGF}
\begin{split}
\L_{W_1, q_1}(w-\tilde{u}_2) &= -\L_{W_1, q_1}\tilde{u}_2 \\
                             &= (\L_{W_2,q_2} - \L_{W_1, q_1})\tilde{u}_2 \\
                             &= (W_2 - W_1) \cdot D \tilde{u}_2 + D \cdot (W_2 - W_1) \tilde{u}_2 + (W_2^2 - W_1^2 + q_2 - q_1) \tilde{u}_2.
\end{split}
\end{equation}
On the other hand, the Green's formula from [DKSU] gives us that
\begin{equation}\label{GF}
\begin{split}
\int_{\Om} \L_{W_1, q_1}(w-\tilde{u}_2) \overline{u_1} dV &= \int_{\partial \Om} \nd (\tilde{u}_2 - w) \overline{u_1} dS \\
                                                          &= \int_{\partial \Om \setminus U} \nd (\tilde{u}_2-w) \overline{u_1} dS.
\end{split}                                                          
\end{equation}
Combining \eqref{nonGF} with \eqref{GF} gives
\begin{eqnarray*}
\int_{\partial \Om \setminus U} \nd (\tilde{u}_2-w) \overline{u_1} dS &=& \int_{\Om}(W_2 - W_1) \cdot(D\tilde{u}_2 \overline{u_1} + \tilde{u}_2 \overline{Du_1})dV \\
                                                                             & & + \int_{\Om} (W_2^2 - W_1^2 + q_2 - q_1) \tilde{u}_2 \overline{u_1} dV.
\end{eqnarray*}
Expanding $\tilde{u}_2$ as $\tilde{u}_2 = u_2 + u_r$ on the right side gives
\begin{equation}\label{bGf}
\begin{split}
\int_{\partial \Om \setminus U}\nd (\tilde{u}_2-w)\overline{u_1}dS &= \int_{\Om}(W_2 - W_1) \cdot(D u_2 \overline{u_1} + u_2 \overline{Du_1})dV \\
                                                                          &  + \int_{\Om} (W_2^2 - W_1^2 + q_2 - q_1) u_2 \overline{u_1} dV \\
                                                                          &  + \int_{\Om}(W_2 - W_1) \cdot(D u_r \overline{u_1} + u_r \overline{Du_1})dV \\
                                                                          &  + \int_{\Om} (W_2^2 - W_1^2 + q_2 - q_1) u_r \overline{u_1} dV \\
\end{split}                                                                          
\end{equation}

We can label the terms as follows:   \textcircled{1} = \textcircled{2} + \textcircled{3} + \textcircled{4} + \textcircled{5}.  Consider the terms on the right side first.  \textcircled{2} is bounded above by 
\[
\|(W_2 - W_1) e^{-\frac{\ph}{h}} D u_2 \|_{\Om} \|a_1 + r_1\|_{\Om} + \|(W_2 - W_1) e^{\frac{\ph}{h}} \overline{Du_1} \|_{\Om} \|a_2 + r_2\|_{\Om}.
\]
Since $W_2 - W_1$ is bounded on $\Om$, $\|a_1\|_{\Om}$ and $\|a_2\|_{\Om}$ are $O(1)$, and $\|r_1\|_{\Om}$ and $\|r_2\|_{\Om}$ are $O(h)$, 
\[
| \mbox{\textcircled{2}} | \lesssim \| e^{-\frac{\ph}{h}} D u_2 \|_{\Om} + \| e^{\frac{\ph}{h}} D u_1 \|_{\Om}.
\] 
Meanwhile, \textcircled{3} is bounded above by 
\[
| \mbox{\textcircled{3}} | \leq \| (W_2^2 - W_1^2 + q_2 - q_1) (a_2+r_2) \|_{\Om} \|a_1 + r_1\|_{\Om} = O(1).
\]
Similarly, 
\begin{eqnarray*}
| \mbox{\textcircled{4}} | &\lesssim& \|e^{-\frac{\ph}{h}}Du_r\|_{\Om}+ \|e^{\frac{\ph}{h}}\overline{Du_1}\|_{\Om} \|e^{\frac{-2\b y}{h}}\|_{\Om} \\
                           &\lesssim& \|e^{-\frac{\ph}{h}}Du_r\|_{\Om}+ h \|e^{\frac{\ph}{h}}\overline{Du_1}\|_{\Om} 
\end{eqnarray*}
and \textcircled{5} is bounded above by
\[
| \mbox{\textcircled{5}} | \leq \| (W_2^2 - W_1^2 + q_2 - q_1) e^{\frac{-2\b y}{h}} b \|_{\Om} \|a_1 + r_1\|_{\Om} = O(h).
\]

Now examine term \textcircled{1}:
\[
\left | \int_{\partial \Om \setminus U}\nd(\tilde{u}_2-w)\overline{u_1}dS \right | \leq \| \nd(\tilde{u}_2-w)e^{-\frac{\ph}{h}}\|_{\partial \Om \setminus U}
                                                                                               \|a_1+r_1\|_{\partial \Om \setminus U}. 
\]
The factor $\|a_1+r_1\|_{\partial \Om \setminus U}$ is $O(1)$.  Furthermore, on $\partial \Om \setminus U$, $\nd \ph \geq \e_1$, so 
\begin{eqnarray*}
\left | \int_{\partial \Om \setminus U}\nd(\tilde{u}_2-w)\overline{u_1}dS \right | 
                                            &\lesssim& \frac{1}{\sqrt{\e_1}}\| \sqrt{\nd \ph} e^{-\frac{\ph}{h}} \nd (\tilde{u}_2-w)\|_{\partial \Om \setminus U} \\
                                            &\lesssim& \frac{1}{\sqrt{\e_1}}\| \sqrt{\nd \ph} e^{-\frac{\ph}{h}} \nd (\tilde{u}_2-w)\|_{\backom}. \\
\end{eqnarray*}
Then using the original Carleman estimate from [DKSU], \textcircled{1} is bounded above by
\[
\frac{C}{\sqrt{\e_1}} \left( \sqrt{h} \| e^{-\frac{\ph}{h}} \L_{A_1, q_1}(\tilde{u}_2 - w) \|_{\Om} + \| \sqrt{-\nd \ph} e^{-\frac{\ph}{h}} \nd (\tilde{u}_2 - w) \|_{\frontom} \right).
\]
The last term on the right side is zero, because $\nd (\tilde{u}_2 - w) = 0$ on $U$, and $\frontom \subset U$.  Therefore the bound on \textcircled{1} can be written as
\begin{eqnarray*}
& &    \frac{C\sqrt{h}}{\sqrt{\e_1}} \| e^{-\frac{\ph}{h}} \L_{W_1, q_1}(\tilde{u}_2 - w) \|_{\Om}\\
&=&    \frac{C\sqrt{h}}{\sqrt{\e_1}} \| e^{-\frac{\ph}{h}} ((W_1-W_2) \cdot D\tilde{u}_2 + D\cdot (W_1-W_2)\tilde{u}_2 + (W_1^2 - W_2^2 + q_1 - q_2) \tilde{u}_2) \|_{\Om}\\
&\leq& \frac{C\sqrt{h}}{\sqrt{\e_1}} \| e^{-\frac{\ph}{h}}((W_1-W_2) \cdot Du_2 + D\cdot (W_1-W_2) u_2 + (W_1^2 - W_2^2 + q_1 - q_2) u_2) \|_{\Om}\\
& &    + \frac{C\sqrt{h}}{\sqrt{\e_1}} \| e^{-\frac{\ph}{h}}((W_1-W_2) \cdot Du_r + D\cdot (W_1-W_2) u_r + (W_1^2 - W_2^2 + q_1 - q_2) u_r) \|_{\Om}\\
&\leq& \frac{C\sqrt{h}}{\sqrt{\e_1}} \left( \| e^{-\frac{\ph}{h}}Du_2\|_{\Om}  + \| e^{-\frac{\ph}{h}}u_2\|_{\Om}
       + \| e^{-\frac{\ph}{h}}Du_r\|_{\Om} + \|e^{-\frac{\ph}{h}}u_r \|_{\Om} \right) \\
&\leq& \frac{C\sqrt{h}}{\sqrt{\e_1}} \left( \| e^{-\frac{\ph}{h}}Du_2\|_{\Om}  + \| a_2+r_2\|_{\Om}
       + \| e^{-\frac{\ph}{h}}Du_r\|_{\Om} + \| e^{-\frac{2 \b y}{h}}b \|_{\Om} \right)\\
&\leq& \frac{C\sqrt{h}}{\sqrt{\e_1}} \left( \| e^{-\frac{\ph}{h}}Du_2\|_{\Om}  + O(1)
       + \| e^{-\frac{\ph}{h}}Du_r\|_{\Om} + O(h) \right)\\
\end{eqnarray*}
where the constant $C$ mutates as necessary to preserve the bound.  Therefore in order to bound the terms \textcircled{1},\textcircled{2}, and \textcircled{4}, we need to calculate 
\[
\| e^{\frac{\ph}{h}} D u_1 \|_{\Om}, \| e^{-\frac{\ph}{h}}Du_2\|_{\Om}, \mbox{ and } \| e^{-\frac{\ph}{h}}Du_r\|_{\Om}.
\]
We have
\begin{eqnarray*}
\| e^{\frac{\ph}{h}} D u_1 \|_{\Om} &=&        \|e^{\frac{\ph}{h}}\frac{1}{h}D(-\ph+i\psi)e^{\frac{1}{h}(-\ph+i\psi)}(a_1+r_1)+e^{\frac{i\psi}{h}}D(a_1+r_1) \|_{\Om}\\
                                    &\lesssim& \frac{1}{h}\|D(-\ph+i\psi)(a_1 + r_1)\|_{\Om} + \| D(a_1 + r_1) \|_{\Om}\\
                                    &=&        O(h^{-1}) + O(1) \\
                                    &=&        O(h^{-1})
\end{eqnarray*}
since $\|r_1\|_{H^1(\Om)}$ is $O(h)$. Similarly,
\[
\| e^{-\frac{\ph}{h}} D u_2 \|_{\Om} = O(h^{-1}).
\]
Finally,
\begin{eqnarray*}
\| e^{-\frac{\ph}{h}}Du_r\|_{\Om} &=&        \| e^{-\frac{\ph}{h}} \frac{1}{h}D \ell e^{\frac{\ell}{h}}b + e^{-\frac{\ph}{h}} e^{\frac{\ell}{h}}Db\|_{\Om} \\
                                  &\lesssim& \frac{1}{h} \| e^{-\frac{k}{h}} b D \ell\|_{\Om} + \| e^{-\frac{k}{h}} Db\|_{\Om} \\
                                  &=&        O(1) + O(h) \\
                                  &=&        O(1). \\
\end{eqnarray*} 

Putting all of this together gives
\begin{eqnarray*}
\mbox{\textcircled{1}} &=& O(h^{-\half})  \\
\mbox{\textcircled{2}} &=& O(h^{-1})  \\
\mbox{\textcircled{3}} &=& O(1)  \\
\mbox{\textcircled{4}} &=& O(1)  \\
\mbox{\textcircled{5}} &=& O(h)  
\end{eqnarray*}

Therefore multiplying \eqref{bGf} through by $h$ and taking the limit as $h$ goes to zero gives
\[
\lim_{h \rightarrow 0} h \int_{\Om}(W_2 - W_1) \cdot(D u_2 \overline{u_1} + u_2 \overline{Du_1})dV = 0.
\]
From here on the proof follows verbatim from [DKSU]; it is outlined here only for convenience.  

We know from the proofs of Propositions \ref{boundarysolns} and \ref{noboundarysolns} that $a_1$ and $a_2$ have the form $a_1 = \g e^{\Phi_1}$ and $a_2 = e^{\Phi_2}$, where $\g$ solves
\[
(-\grad \ph + i \grad \psi) \cdot \grad \g = 0
\]
on $\Om$.  Therefore
\begin{equation}\label{Re1}
\int_{\Om} (W_2 - W_1) \cdot (\grad \ph + i \grad \psi) e^{\overline{\Phi_1} + \Phi_2}g dV = 0.  
\end{equation}
where $g$ solves
\[
(\grad \ph + i \grad \psi) \cdot \grad g = 0.
\]
on $\Om$.

Meanwhile, $\Phi_1$ and $\Phi_2$ solve
\[
(\grad \ph - i \grad \psi) \cdot \grad \Phi_1 + i(\grad \ph - i \grad \psi) \cdot W_1 + \half \Lap(\ph - i\psi)= 0
\]
and
\[
(\grad \ph + i \grad \psi) \cdot \grad \Phi_2 + i(\grad \ph + i \grad \psi) \cdot W_2 + \half \Lap(\ph + i\psi) = 0
\]
on $\Om$, respectively.  Conjugating the first equation and adding it to the second gives
\begin{equation}\label{Phi12}
(\grad \ph + i \grad \psi) \cdot (\grad(\overline{\Phi_1} + \Phi_2) + i(W_2 - W_1)) + \Lap(\ph + i\psi) = 0.  
\end{equation}
Therefore
\[
\grad \cdot ((\grad \ph + i \grad \psi)e^{\overline{\Phi_1} + \Phi_2}) = -i(W_2 - W_1) \cdot (\grad \ph + i \grad \psi) e^{\overline{\Phi_1} + \Phi_2}.
\]
Combining this with \eqref{Re1} gives
\begin{equation}\label{Re2}
\int_{\Om} g \grad \cdot ((\grad \ph + i \grad \psi)e^{\overline{\Phi_1} + \Phi_2}) dV = 0
\end{equation}
whenever $(\grad \ph + i\grad \psi) \cdot \grad g = 0$.

Choose cylindrical coordinates $s,t,$ and $\eta$ on $\RnI$ such that $s = x_{\omega}$, $t = |x'| > 0$, and $\eta = \frac{x'}{|x'|}$, and let $z$ be the complex variable $s + it$.  Then \eqref{Re2} becomes
\[
\int_{\Om} \frac{g}{z} (\partial_{\overline{z}} - \frac{n-1}{z - \overline{z}}) e^{\overline{\Phi_1} + \Phi_2} (z - \overline{z})^{n-1} dV = 0
\]
where
\[
\partial_{\overline{z}} g = 0
\]
on $\Om$.  Since this is the only restriction on $g$, we can pick any $g$ of the form $g_1(z)g_2(\eta)$ where 
\[
\partial_{\overline{z}} g_1(z) = 0
\]
on $\Om$.  Therefore if $\Om_{\eta}$ represent the slices of constant $\eta$, then
\[
\int_{\Om_{\eta}} \frac{g}{z} (\partial_{\overline{z}} - \frac{n-1}{z - \overline{z}}) e^{\overline{\Phi_1} + \Phi_2} (z - \overline{z})^{n-1} d\overline{z} \wedge dz = 0
\]
for a.e. $\eta$, whenever $g(z)$ is holomorphic on $\Om_{\eta}$.  Actually, since this holds for any holomorphic $g$ on $\Om_{\eta}$, we can write that
\[
\int_{\Om_{\eta}} g (\partial_{\overline{z}} - \frac{n-1}{z - \overline{z}}) e^{\overline{\Phi_1} + \Phi_2} (z - \overline{z})^{n-1} d\overline{z} \wedge dz = 0
\]
for a.e. $\eta$, whenever $g(z)$ is holomorphic on $\Om_{\eta}$.

Now 
\begin{eqnarray*}
& & d(g(z) e^{\overline{\Phi_1} + \Phi_2} (z - \overline{z})^{n-1} dz) \\
&=& \left( \partial_{\overline z} - \frac{n-1}{z - \overline{z}} \right) e^{\overline{\Phi_1} + \Phi_2} g(z) (z - \overline{z})^{n-1} d\overline{z} \wedge dz\\
\end{eqnarray*}
so by Stokes' theorem, 
\begin{equation}\label{bndryRe1}
\int_{\partial \Om_{\eta}} g(z) e^{\overline{\Phi_1} + \Phi_2} (z - \overline{z})^{n-1} dz = 0.
\end{equation}

\begin{lemma}\label{holomagic}
If \eqref{bndryRe1} holds for every $g$ which is holomorphic on $\Om_{\theta}$, then there exists a nonvanishing holomorphic function $F$ in $\Omega_{\eta}$, continuous on $\overline{\Om_{\eta}}$, such that
\[
F|_{\partial \Om_{\eta}} = (z - \overline{z})^{n-1} e^{\overline{\Phi_1} + \Phi_2}|_{\partial \Om_{\eta}}.
\]
\end{lemma}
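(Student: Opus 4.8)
The plan is to construct $F$ as a Cauchy-type integral of the prescribed boundary data and then to show, via the argument principle, that it has no zeros. Throughout, fix $\eta$ in the full-measure set for which $\Om_\eta$ is a bounded planar domain with smooth boundary (for instance by Sard's theorem, since $\Om$ has smooth boundary), and abbreviate the boundary data as
\[
\mathfrak h(z) \;=\; (z-\overline z)^{\,n-1}\,e^{\overline{\Phi_1}(z)+\Phi_2(z)}, \qquad z\in\overline{\Om_\eta}.
\]
From the proofs of Propositions \ref{boundarysolns} and \ref{noboundarysolns}, $\Phi_1$ and $\Phi_2$ solve inhomogeneous Cauchy--Riemann equations with $C^2$ right-hand sides on the simply connected set $\Om$, so $\overline{\Phi_1}+\Phi_2$ is a single-valued $C^1$ function on $\overline\Om$; hence $\mathfrak h$ is $C^1$ up to $\partial\Om_\eta$. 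Moreover $\mathfrak h$ is nowhere zero on $\overline{\Om_\eta}$: the exponential factor never vanishes, and $z-\overline z = 2i\,\mathrm{Im}\,z$ with $\mathrm{Im}\,z=|x'|$ bounded below by a positive constant on the compact set $\overline\Om$ — this is precisely the condition on $\omega$ that made $\psi=d_{S^n}(\cdot,\omega)$ smooth.

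First I would test the hypothesis \eqref{bndryRe1} against the functions $g(z)=(z-w)^{-1}$ for $w\notin\overline{\Om_\eta}$, each of which is holomorphic in a neighbourhood of $\overline{\Om_\eta}$. This yields
\[
H(w)\;:=\;\frac{1}{2\pi i}\int_{\partial\Om_\eta}\frac{\mathfrak h(z)}{z-w}\,dz\;=\;0 \qquad\text{for every }w\in\C\setminus\overline{\Om_\eta},
\]
the vanishing holding on the unbounded component of the complement and on each bounded hole of $\Om_\eta$ as well. On the other hand $H$ is holomorphic on $\Om_\eta$, and since $\mathfrak h$ is (say) H\"older continuous on the smooth closed curves making up $\partial\Om_\eta$, the Plemelj--Sokhotski jump relations apply: the boundary limits of $H$ from inside and from outside differ by $\mathfrak h$ along $\partial\Om_\eta$. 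Because the exterior limit is identically $0$, $H$ extends continuously to $\overline{\Om_\eta}$ with $H|_{\partial\Om_\eta}=\mathfrak h$. Thus $F:=H$ is holomorphic on $\Om_\eta$, continuous on $\overline{\Om_\eta}$, and satisfies $F|_{\partial\Om_\eta}=(z-\overline z)^{n-1}e^{\overline{\Phi_1}+\Phi_2}$, so only the nonvanishing needs to be checked.

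For that I would apply the argument principle. Since $F=\mathfrak h\neq 0$ on $\partial\Om_\eta$, the number of zeros of $F$ in $\Om_\eta$, counted with multiplicity, equals the winding number about the origin of the closed curve traced by $\mathfrak h$ along $\partial\Om_\eta$ (with the boundary orientation, summed over all boundary components and, if $\Om_\eta$ is disconnected, over all of its components). I would compute this winding number by splitting $\mathfrak h$ into its two factors and adding the changes of argument. Along $\partial\Om_\eta$ the factor $(z-\overline z)^{n-1}=(2i)^{n-1}(\mathrm{Im}\,z)^{n-1}$ is always a positive multiple of the fixed constant $(2i)^{n-1}$, so its argument is constant and its contribution is $0$; and the argument of $e^{\overline{\Phi_1}+\Phi_2}$ equals $\mathrm{Im}(\overline{\Phi_1}+\Phi_2)$, whose net change around each closed boundary component is $0$ because $\overline{\Phi_1}+\Phi_2$ is single-valued and continuous. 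Hence the winding number is $0$, $F$ has no zeros in $\Om_\eta$, and the lemma is proved.

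I expect the two genuinely delicate points to be (i) justifying that the Cauchy integral $H$ is continuous up to $\partial\Om_\eta$ with boundary trace exactly $\mathfrak h$ — this is where the smoothness of the slice boundary for generic $\eta$ and the $C^1$-regularity of $\mathfrak h$ are used — and (ii) the winding-number bookkeeping, whose essential content is simply that each of the two factors of the boundary data has vanishing winding; together these are exactly what force the holomorphic extension to be zero-free.
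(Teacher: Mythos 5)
Your proposal is correct and follows essentially the same route as the paper: define $F$ by the Cauchy integral of the boundary data, use the hypothesis with $g(z)=(z-w)^{-1}$ to kill the exterior part, apply the Plemelj--Sokhotski jump relation to identify the interior boundary trace, and then show nonvanishing via the argument principle. Your winding-number computation (constant argument of $(z-\overline z)^{n-1}$ plus single-valuedness of $\overline{\Phi_1}+\Phi_2$) is just a slightly more explicit version of the paper's evaluation of $\int_{\partial\Om_\eta} df/f$ as the integral of an exact form.
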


\begin{proof}

Let $f = (z - \overline{z})^{n-1} e^{\overline{\Phi_1} + \Phi_2}$.  Define
\[
F(z) = \frac{1}{2\pi i } \int_{\partial \Om_{\eta}} \frac{f(\zeta)}{\zeta - z} d\zeta 
\]
for all $z \in \C \setminus \partial \Om_{\eta}$.  Then $F$ is holomorphic on the interior and exterior of $\Om_{\eta}$, and 
\begin{equation}\label{PSP}
\lim_{z \rightarrow z_0, z \in \Om_{\eta}} F(z) - \lim_{z \rightarrow z_0, z \notin \Om_{\eta}} F(z) = f(z_0)
\end{equation}
for $z_0 \in \partial \Om_{\theta}$.

Now for $z \notin \Om_{\eta}$, $(\zeta- z)^{-1}$ is a holomorphic function of $\zeta$ on $\Om_{\eta}$.  Therefore \eqref{bndryRe1} implies that $F(z) = 0$ for $z \notin \Om_{\eta}$, and so \eqref{PSP} means that 
\[
\lim_{z \rightarrow z_0, z \in \Om_{\eta}} F(z)  = f(z_0).
\]
Thus it remains only to prove that $F$ does not vanish on $\Omega_{\eta}$.  To see this, note first that since $F$ is holomorphic on $\Om_{\eta}$, the number of zeroes of $F$ enclosed by $\partial \Om_{\eta}$ is given by $n(F(\partial \Om_{\eta}), 0)$, where $n(\gamma, z_0)$ represents the winding number of $\gamma$ around $z_0$.  Since $F = f$ on the boundary, 
$n(F(\partial \Om_{\eta}), 0) = n(f(\partial \Om_{\eta}), 0)$, and 
\begin{eqnarray*}
n(f(\partial \Om_{\eta}), 0) &=& \int_{f(\partial \Om_{\eta})} \frac{dz}{z} \\
                               &=& \int_{\partial \Om_{\eta}} \frac{df}{f} \\
                               &=& \int_{\partial \Om_{\eta}}\partial_z (\overline{\Phi_1}+\Phi_2)dz + \partial_{\overline{z}}(\overline{\Phi_1}+\Phi_2)d\overline{z} \\
                               & & + \int_{\partial \Om_{\eta}} (n-1)(z - \overline{z})^{-1} dz - (n-1)(z-\overline{z}^{-1}d\overline{z} \\
                               &=& \int_{\partial \Om_{\eta}} d(\overline{\Phi_1}+\Phi_2 + (n-1)\log(z-\overline{z})\\
                               &=& 0
\end{eqnarray*}
by Stokes' theorem.  Therefore $F$ is nonvanishing in $\Om_{\eta}$.

\end{proof}


This argument works for a.e. $\eta$, so we can develop a function $F(z,\eta)$ on $\Om$.  Then $\frac{dF}{F}$ is closed on $\Om$, so it is exact, and therefore $\frac{dF}{F} = da$ for some function $a$ satisfying $F = e^{a}$.  This defines a holomorphic logarithm of $F$.

Therefore
\[
\int_{\partial \Om_{\eta}} g(z)(\log(z - \overline{z})^{n-1}+\overline{\Phi_1}+\Phi_2) dz = \int_{\partial \Om_{\eta}} g(z)\log F dz = 0.
\]
Then by Stokes' theorem,
\[
\int_{\Om_{\eta}} g(z)\left( \partial_{\overline{z}}(\overline{\Phi_1}+\Phi_2) - \frac{n-1}{z - \overline{z}} \right) d\overline{z} \wedge dz = 0.  
\]
Using \eqref{Phi12}, we can rewrite this as
\begin{equation}\label{cRe}
\int_{\Om_{\eta}} g(z)(W_1-W_2) \cdot (e_s + i e_t) d\overline{z} \wedge dz = 0.  
\end{equation}
Setting $g = 1$ and separating this into real and imaginary parts gives that
\[
\int_{\Om_{\eta}} (W_1-W_2) \cdot e_s ds dt = 0
\]
and
\[
\int_{\Om_{\eta}} (W_1-W_2) \cdot e_t ds dt = 0
\]
separately.  This holds for a.e. $\eta$, where $\Om_{\eta}$ are the intersections of $\Om$ with translations of $P = \mathrm{span} \{e_s, e_t \}$.  Therefore
\begin{equation}\label{zeroslice}
\int_{v_0 + P} 1_{\Om} (W_1 - W_2) \cdot \xi dm = 0
\end{equation}
for all $\xi \in P$, where $dm$ represents the Lebesgue measure on $v_0 + P$. 

Now we can vary our choice of coordinates to move the origin around in a small neighbourhood, and vary $\omega$ slightly, without changing the fact that $E$ coincides with a graph of the form $r = f(\th)$ for smooth $f$.  Therefore \eqref{zeroslice} holds for each plane $P$ in a small neighbourhood of the original.  Then Lemma 5.2 from [DKSU] gives us that $dW_1 = dW_2$.

It remains only to prove that $q_1 = q_2$.  Note that $dW_1 = dW_2$ implies that $W_1 = W_2 + \grad \Psi$ for some function $\Psi$.  Therefore by \eqref{cRe} 
\[
\int_{\Om_{\eta}} g(z)  \partial_{\overline{z}} \Psi(z, \eta) d\overline{z} \wedge dz = 0
\]
for a.e. $\eta$, whenever $g$ is holomorphic in $\Om_{\eta}$.

By reasoning as in Lemma \ref{holomagic}, there exists a holomorphic $\tilde{\Psi}$ on $\Om_{\eta}$ such that $\tilde{\Psi} = \Psi$ on $\partial \Om_{\eta}$.  $\Psi$ is real, since $W_1$ and $W_2$ are, and so $\tilde{\Psi}$ is real on $\partial \Om_{\eta}$.  Therefore the imaginary part of  $\tilde{\Psi}$ is harmonic and zero on $\partial \Om_{\eta}$, so it must be identically $0$. Then the real part of $\tilde{\Psi}$ (and thus $\tilde{\Psi}$ itself) is constant.  Therefore $\Psi$ is constant on each  $\partial \Om_{\eta}$.

Varying $\omega$ and the origin as before, we get that $\Psi$ is constant on $\partial \Om$, and since it is only defined up to a constant anyway, we can assume that $\Psi = 0$ on $\partial \Om$.  Therefore, up to a gauge transformation, $\Psi = 0$, so $W_1 = W_2$.  Going back to \eqref{bGf}, we now have
\begin{equation}\label{newbGf}
\int_{\partial \Om \setminus U}\nd (\tilde{u}_2-w)\overline{u_1}dS = \int_{\Om} (q_2 - q_1) u_2 \overline{u_1} dx + \int_{\Om} (q_2 - q_1) u_r \overline{u_1} dV.
\end{equation}

The first and second terms on the right side are $O(1)$ and $O(h)$ as before.  The left side is now bounded by
\[
\frac{\sqrt{h}}{\sqrt{\e_1}} \left( \|e^{-\frac{\ph}{h}}(q_1 - q_2) u_2\|_{\Om} + \|e^{-\frac{\ph}{h}}(q_1 - q_2) u_r\|_{\Om} \right) = \sqrt{h}(O(1)+O(h)) = O(h^{\half}),
\]
so taking the limit of \eqref{newbGf} as $h$ goes to zero gives
\[
\lim_{h \rightarrow 0} \int_{\Om} (q_2 - q_1) u_2 \overline{u_1} dV = 0.
\]
Using the explicit forms of $u_1$ and $u_2$ gives 
\[
\int_{\Om} (q_2 - q_1)e^{\overline{\Phi_1}+\Phi_2} g dV = 0
\]
whenever $g$ solves
\[
(\grad \ph + i \grad \psi) \cdot \grad g = 0
\]
on $\Om$.  

Choose coordinates $s,t,$ and $\eta$ as before.  Again we can pick any $g$ of the form $g_1(s,t)g_2(\theta)$ where 
\[
(\partial_s + i \partial_t) g_1(s,t) = 0.
\]
on $\Om$.  Therefore if $\Om_{\eta}$ represent the slices of constant $\eta$, then
\[
\int \int_{\Om_{\eta}} g_1(s,t) (q_2 - q_1)e^{\overline{\Phi_1}+\Phi_2} ds dt = 0
\]
for a.e. $\theta$.  If $z = s + it$, then we can rewrite this as
\[
\int \int_{\Om_{\theta}} g(z) (q_2 - q_1)e^{\overline{\Phi_1}+\Phi_2} d\overline{z} \wedge dz = 0 
\]
for a.e. $\theta$, whenever $g(z)$ is holomorphic on $\Om_{\theta}$.  Now the equations for $\Phi_1$ and $\Phi_2$ read 
\[
\partial_{z} \Phi_1 = 0
\] 
and
\[
\partial_{\overline{z}} \Phi_2 = 0,
\]
so
\[
\partial_{\overline{z}}(\overline{\Phi}_1 + \Phi_2) = 0,
\]
or in other words, $\overline{\Phi}_1 + \Phi_2$ is holomorphic.  Therefore $e^{-(\overline{\Phi}_1 + \Phi_2)}$ is holomorphic, and so in particular
\[
\int \int_{\Om_{\theta}}(q_2 - q_1) d\overline{z} \wedge dz = 0 
\]

By varying $\ph$ as before, we get 
\[
\int_{v_0 + P} 1_{\Om} (q_2 - q_1)dm = 0
\]
for all planes $P$ through the origin containing a vector $v$ in a neighbourhood of $e_y$.  Then reasoning as in [DKSU] gives $q_1 = q_2$ on $\Om$.

\section{References}

\begin{enumerate}

\item[$\lbrack$DKSU$\rbrack$]
Dos Santos Ferreira, D., Kenig, C.E., Sj\"{o}strand, J., and Uhlmann, G.  Determining a Magnetic Schr\"{o}dinger Operator from Partial Cauchy Data.  \textit{Comm. Math. Phys.} 271, 467-488 (2007).

\item[$\lbrack$KSU$\rbrack$]
Kenig, C.E., Sj\"{o}strand, J., and Uhlmann, G.  The Calder\'{o}n Problem with Partial Data.  \textit{Ann. of Math.} 165, 561-591 (2007).

\item[$\lbrack$NSuU$\rbrack$]
Nakamura, G., Sun, Z., and Uhlmann, G.  Global identifiability for an inverse problem for the Schr\"{o}dinger equation in a magnetic field.  \textit{Math. Ann.} 303, 377-388 (1995).

\item[$\lbrack$Sa1$\rbrack$]
Salo, M. Inverse problems for nonsmooth first order perturbations of the Laplacian.  \textit{Ann. Acad. Scient. Fenn. Math. Dissertations} 139 (2004).

\item[$\lbrack$Sa2$\rbrack$]
Salo, M. Semiclassical pseudodifferential calculus and the reconstruction of a magnetic field.  \textit{Comm. PDE} 31, 1639-1666 (2006).

\item[$\lbrack$Su$\rbrack$]
Sun, Z.  An inverse boundary value problem for the Schr\"{o}dinger operator with vector potentials.  \textit{Trans. Amer. Math. Soc.} 338(2), 953-969 (1992).

\item[$\lbrack$To$\rbrack$]
Tolmasky, C.F.:  Exponentially growind solutions for nonsmooth first-order perturbations of the Laplacian.  \textit{SIAM J. Math. Anal.} 29(1), 116-133 (1998).

\end{enumerate}

\end{document}